\newtheorem{theorem}{Theorem}
\theoremstyle{definition}{\newtheorem{remark}[theorem]{Remark}}
\newtheorem{lemma}[theorem]{Lemma}
\newtheorem{conjecture}[theorem]{Conjecture}
\newtheorem{question}[theorem]{Question}
\newtheorem{example}[theorem]{Example}
\newtheorem{proposition}[theorem]{Proposition}
\theoremstyle{definition}{\newtheorem{definition}[theorem]{Definition}}
\newtheorem{corollary}[theorem]{Corollary}
\title{On the Packing Functions of Some Linear Sets of Lebesgue Measure Zero}
\author{Austin Anderson}
\author{Steven Damelin}
\address{Department of Applied Mathematics, Florida Polytechnic University, Lakeland, FL, 33805}
\email{aanderson@floridapoly.edu}
\address{zbMATH Open, Department of Mathematics, FIZ Karlsruhe - Leibniz Institute for Information Infrastructure, Berlin, Germany, Franklinstr. 11 10587}
\email{steve.damelin@gmail.com}
\begin{document}
\begin{abstract}
We use a characterization of Minkowski measurability to study the asymptotics of best packing on cut-out subsets of the real line with Minkowski dimension $d\in(0,1)$. Our main result is a proof that Minkowski measurability is a sufficient condition for the existence of best packing asymptotics on monotone rearrangements of these sets. For each such set, the main result provides an explicit constant of proportionality $p_d,$ depending only on the Minkowski dimension $d,$ that relates its packing limit and Minkowski content. We later use the Digamma function to study the limiting value of $p_d$ as $d\to 1^-.$
\par For sharpness, we use renewal theory to prove that the packing constant of the $(1/2,1/3)$ Cantor set is less than the product of its Minkowski content and $p_d$. 
\par We also show that the measurability hypothesis of the main theorem is necessary by demonstrating that a monotone rearrangement of the complementary intervals of the 1/3 Cantor set has Minkowski dimension $d=\log2/\log3\in(0,1),$ is not Minkowski measurable, and does not have convergent first-order packing asymptotics. 
\par The aforementioned characterization of Minkowski measurability further motivates the asymptotic study of an infinite multiple subset sum problem.
\end{abstract}
\maketitle
\noindent \textbf{Key words:} Best packing, Minkowski measurability, linear programming, renewal theory, self-similar fractals, Cantor sets\\
\noindent \textbf{Mathematics Subject Classification:} Primary 28A12, 28A78. Secondary 52A40
\section{Introduction}
\begin{definition}[Best packing] For a positive integer $p\geq 1,$ an infinite compact set $A\subset\mathbb{R}^p,$ and a finite point set $\omega_N=\{x_1,\ldots,x_N\}\subset A,$  define the \textit{separation of $\omega_N$} by 
$$\delta(\omega_N):=\min_{i\neq j}||x_i-x_j||_p,$$
where $||\cdot||_p$ denotes the Euclidean norm on $\mathbb{R}^p.$
Then the \textit{$N$-point best packing radius of $A$} is given by 
$$\delta(A,N):=\max_{\omega_N\subset A}\delta(\omega_N).$$
We are primarily interested in studying the generalized inverse of $\delta(A,N)$, the \textit{packing function of $A$}
$$N(A,\varepsilon):=\max\{N\geq 2: \delta(A,N)\geq \varepsilon\},\quad \text{ where } 0<\varepsilon\leq \text{diam}(A).$$
\end{definition}
\par Because $A$ is bounded, its size and geometry dictate the asymptotic growth of $N(A,\varepsilon)$ as $\varepsilon\to 0^+.$ In this manuscript, we study the asymptotic behavior of $N(A,\varepsilon)$ as $\varepsilon\to 0^+$ over certain subsets $A\subset\mathbb{R}$ that evade existing techniques. 
\begin{subsection}{Main Theorems}
For the remainder of the manuscript, let $m_p$ denote Lebesgue measure on $\mathbb{R}^p.$   
\par For each infinite compact set $A\subset\mathbb{R},$ there is a closed interval $I$ of minimal length such that $A\subset I.$ Because $I\setminus A$ is open and bounded in $\mathbb{R}$, $I\setminus A=\cup_{j=1}^{\infty}I_j$ for a collection of pairwise disjoint, bounded, open intervals $I_j$ of positive, finite length. Therefore, $$A=I\setminus(I\setminus A)=I\setminus \cup_{j=1}^{\infty}I_j,$$ as $A\subset I.$ In this manuscript, we are primarily interested in the asymptotic behavior of $N(A,\varepsilon)$ over certain compact sets of Lebesgue measure $0$ that are not covered by known techniques. Notice that if $m_1(A)=0,$ then $m_1(I)=\sum_{j=1}^{\infty}m_1(I_j),$ by the minimality of the interval $I.$ Hence, all compact subsets $A\subset\mathbb{R}$ with $m_1(A)=0$ take the following form.
\begin{definition}[Cut-out sets and their rearrangements]\label{def:cut}
Suppose $I\subset \mathbb{R}$ is a compact interval, and consider a countable collection of disjoint open intervals $I_j$ contained in $I,$ $$\bigcup_{j=1}^{\infty}I_j\subset I,$$ indexed so that the sequence $l_j=m_1(I_j)>0$ is non-increasing for $j\geq 1.$ Then assume  $$\sum_{j=1}^{\infty}l_j=m_1(I).$$ We refer to the sets $\Gamma$ obtained in this manner as \textit{cut-out sets}. 
\par If $\Gamma'=I\setminus \cup_{j=1}^{\infty}I_j'$ is any another cut-out set built by removing complementary intervals of the same lengths $|I_j'|=l_j$ from $I$ in a possibly different geometric order, then we say that $\Gamma'$ is a \textit{rearrangement} of $\Gamma.$ Denote the set of all rearrangements of $\Gamma$ by $\mathcal{R}\left(\Gamma\right).$ Any rearrangement of $\Gamma$ with the complementary intervals $I_j$ placed in left-to-right order of non-increasing lengths shall be referred to as a \textit{monotone rearrangement} of $\Gamma.$ We will often shorten this by simply saying that $\Gamma$ is \textit{monotone}.
\end{definition} 
Building on Borel's work \cite{Borel}, Beisicovtich and Taylor developed various notions of dimension for cut-out sets and their rearrangements in \cite{Besicovitch_Taylor_1954}. These have led to many subsequent studies (e.g. \cite{p-cant},\cite{Cabrelli_Mendivil_Molter_Shonkwiler_2004},\cite{Xiong_Wu_2009},\cite{Hare_Mendivil_Zuberman_2013b}).
The Main Theorem of this manuscript is a pair of conditions that suffice to establish the existence of first-order asymptotics for $N(\Gamma,\varepsilon)$ on cut-out sets (thus on certain compact sets $A\subset \mathbb{R}$ with $m_1(A)=0$.) 
\begin{theorem}[Main Theorem]\label{thm:main}Suppose 
$\Gamma=I\setminus\cup_{j=1}^{\infty}I_j$ is a cut-out set with the following two properties:
\begin{enumerate}[\label=Property I.]
    \item (Measurability) The sequence $(l_j)_{j=1}^{\infty}$ of non-increasing lengths of the complementary intervals of $\Gamma$ has strongly regular decay so that 
$$0<\lim_{j\to\infty}l_jj^{1/d}=L<\infty,$$ for some $d\in(0,1).$ 
    \item (Monotonicity) $\Gamma$ is monotone.
\end{enumerate}
Then 
$$\lim_{\varepsilon\to0^+}N(\Gamma,\varepsilon)\varepsilon^d=L^d\sum_{k=1}^{\infty}\frac{k^d-(k-1)^d}{k}.$$
\end{theorem}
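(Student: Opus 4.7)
The plan is to reduce the packing problem to a one-dimensional greedy algorithm on the sequence $(l_j)$, then count the greedy steps organized by step size using the decay $l_j \sim L j^{-1/d}$. Because $\Gamma$ is monotone with $m_1(\Gamma)=0$, the complementary intervals are placed abutting in non-increasing order of length, so $\Gamma = \{s_k : k \geq 0\} \cup \{s_\infty\}$ with $s_k := \sum_{j=1}^{k} l_j$. A standard exchange argument shows that the $\varepsilon$-separated packing of maximum cardinality starts at $s_0$ and continues greedily: set $J_0=0$ and let $J_i$ be the smallest index $J > J_{i-1}$ with $l_{J_{i-1}+1} + \cdots + l_{J} \geq \varepsilon$. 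Thus $N(\Gamma, \varepsilon) = 1 + M(\varepsilon)$, where $M(\varepsilon)$ counts the greedy groups; moreover, the monotonicity of $(l_j)$ implies that the step size $k_i := J_i - J_{i-1}$ is a non-decreasing function of the iteration index $i$.

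Next, I would use the decay $l_j \sim L j^{-1/d}$ to quantify the step size. For large $J$, the integral test yields $l_{J+1} + \cdots + l_{J+k} \sim L \int_{J}^{J+k} x^{-1/d}\,dx$, so the smallest $k$ making this sum at least $\varepsilon$ is $k \approx \lceil \varepsilon J^{1/d}/L\rceil$. Introducing the dimensionless variable $\xi := J(\varepsilon/L)^d$, this becomes $k = \lceil \xi^{1/d}\rceil$, so the step size equals a fixed integer $k$ precisely for $\xi \in ((k-1)^d, k^d]$, i.e., for $J$ in an interval of length $(L/\varepsilon)^d[k^d - (k-1)^d]$. Since each greedy step of size $k$ advances $J$ by exactly $k$, the count $N_k(\varepsilon)$ of greedy steps of size $k$ satisfies
\[
\lim_{\varepsilon \to 0^+} N_k(\varepsilon)\,\varepsilon^d \;=\; L^d\,\frac{k^d - (k-1)^d}{k}
\]
for each fixed $k \geq 1$, and summing over $k$ would yield the stated limit.

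The main technical obstacles are: (i) controlling the $o(1)$ in $l_j = L j^{-1/d}(1+o(1))$ uniformly across the indices visited by the greedy, which extend up to $J_{\max} \sim \varepsilon^{-d/(1-d)}$; (ii) handling the phase boundaries $\xi = k^d$ where a single greedy step may straddle the threshold so the step size jumps from $k$ to $k+1$; and most crucially (iii) justifying the interchange $\lim_{\varepsilon} \sum_k N_k(\varepsilon)\varepsilon^d = \sum_k \lim_\varepsilon N_k(\varepsilon)\varepsilon^d$. For (iii), I would split the sum at a cutoff $K$: the head $\sum_{k \leq K} N_k(\varepsilon)\varepsilon^d$ converges by finite-sum pointwise convergence, while for the tail $\sum_{k > K} N_k(\varepsilon)\varepsilon^d$ one sandwiches the actual greedy between idealized greedies on the perturbed sequences $L_{\pm} j^{-1/d}$ (with $L_{\pm} := L(1 \pm \delta)$), yielding a uniform-in-$\varepsilon$ upper bound asymptotic to $L^d \sum_{k > K}[k^d - (k-1)^d]/k$, which can be made arbitrarily small by taking $K$ large.
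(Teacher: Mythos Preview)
Your approach is sound and differs genuinely from the paper's. Where you identify the left-to-right greedy as the \emph{exact} optimal packing (a standard fact for $\varepsilon$-separated subsets of $\mathbb{R}$) and then analyze its step-size distribution directly, the paper instead proves matching upper and lower bounds by separate arguments: the upper bound sets up a linear program for the step-size frequencies $f_k$ subject to $\sum_{j\le k} j f_j \le F(k,\varepsilon)+k-1$ (with $F(k,\varepsilon)=\max\{n:\sum_{j=n}^{n+k-1}l_j\ge\varepsilon\}$) and solves it via strong duality; the lower bound builds an explicit, non-greedy configuration with a predetermined count $h_k\approx (L/\varepsilon)^d(k^d-(k-1)^d)/k$ of $k$-steps. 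Your route is more elementary and gives $N(\Gamma,\varepsilon)$ as a single quantity rather than a sandwich; the paper's LP machinery, on the other hand, ports directly to the non-measurable example of Theorem~\ref{thm:sharp1}, where the greedy admits no clean closed form and one still needs tight upper bounds on arbitrary packings.

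One caution on obstacle (iii): the sandwich of the actual greedy between the greedies on $L_\pm j^{-1/d}$ does not work in the way you suggest, because the index sequences $J_i$ visited by the three greedies diverge and their per-$k$ counts $N_k$ are not pointwise comparable. A cleaner tail bound comes from mass: since step sizes are non-decreasing, every step of size $>K$ starts at some $J$ with $\sum_{j=J+1}^{J+K}l_j<\varepsilon$, hence $J\ge F(K,\varepsilon)\sim (LK/\varepsilon)^d$; these steps cover disjoint index blocks each of mass $\ge\varepsilon$, so
\[
\sum_{k>K}N_k(\varepsilon)\;\le\;\varepsilon^{-1}\sum_{j>F(K,\varepsilon)}l_j\;\lesssim\;\frac{d}{1-d}\,L^d K^{d-1}\varepsilon^{-d},
\]
which after multiplying by $\varepsilon^d$ is uniform in $\varepsilon$ and vanishes as $K\to\infty$. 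With this replacement, your outline goes through.
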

In Section \ref{sec:known} we will explain how Property I is equivalent to Minkowski measurability (Theorem \ref{thm:meas}).
\par The next theorem illustrates that Property I is \textit{necessary} for the existence of positive, finite, first-order packing asymptotics, in at least one case.  
\begin{theorem}\label{thm:sharp1} There exists $s\in(0,1),$ and a cut-out set $S$ satisfying Property II but not Property I, such that 
$$0<\liminf_{\varepsilon\to 0^+}N(S,\varepsilon)\varepsilon^{s}<\limsup_{\varepsilon\to 0^+}N(S,\varepsilon)\varepsilon^{s}<\infty.$$
\end{theorem}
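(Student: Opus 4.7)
The plan is to take $s = \log 2 / \log 3$ and let $S$ be the monotone rearrangement of the complementary intervals of the classical middle-thirds Cantor set. Property II holds by construction. The complementary intervals have lengths $l_j = 1/3^k$ for $2^{k-1} \le j \le 2^k - 1$, and since $2^{1/s} = 3$, the sequence $l_j j^{1/s}$ equals $1/3$ at $j = 2^{k-1}$ while tending to $1$ along $j = 2^k - 1$ as $k \to \infty$; hence $l_j j^{1/s}$ does not converge, so Property I fails.

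Writing $s_j := \sum_{i=1}^j l_i$ with $s_0 := 0$, one has $S = \{s_j\}_{j \geq 0} \cup \{1\}$, a countable discrete set whose consecutive gaps $l_j$ are non-increasing. A standard exchange argument then gives that greedy packing starting at $s_0 = 0$ is optimal for every $\varepsilon > 0$. The key structural feature is that the block $B_k := [s_{2^k - 1}, s_{2^{k+1} - 1}]$ consists of $2^k + 1$ points of $S$ in arithmetic progression with common difference $1/3^{k+1}$, with adjacent blocks sharing a single endpoint. Combining this decomposition with an inductive use of the scaling relation $N(S, \varepsilon/3) = 2 N(S, \varepsilon)$ (valid for all sufficiently small $\varepsilon$), I would derive
\begin{equation*}
N(S, 1/3^K) = 2^{K+1} \text{ for all } K \geq 1, \qquad N(S, 2/3^K) = 5 \cdot 2^{K-2} \text{ for all } K \geq 2.
\end{equation*}
The first identity is anchored by exhibiting the packing consisting of all points of $S$ on the grid $3^{-K}\mathbb{Z} \cap [0,1]$ (whose cardinality can be shown inductively to be $2^{K+1}$); the second is anchored by the direct computation $N(S, 2/9) = 5$, whose upper bound follows from the trivial fact that six points at separation $\geq 2/9$ would have total span $\geq 10/9 > 1$.

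Substituting, $N(S, 1/3^K)(1/3^K)^s = 2$ for every $K \geq 1$, while $N(S, 2/3^K)(2/3^K)^s = 5 \cdot 2^{s - 2}$ for every $K \geq 2$. Since $s = \log_3 2 \approx 0.631 < \log_2(8/5)$, the second quantity satisfies $5 \cdot 2^{s-2} < 2$. Monotonicity of $N(S, \cdot)$ combined with these formulas also yields the universal bounds $1 \leq N(S, \varepsilon) \varepsilon^s \leq 4$ for all $\varepsilon \in (0, 1]$, giving
\begin{equation*}
0 < \liminf_{\varepsilon \to 0^+} N(S, \varepsilon) \varepsilon^s \leq 5 \cdot 2^{s - 2} < 2 \leq \limsup_{\varepsilon \to 0^+} N(S, \varepsilon) \varepsilon^s < \infty,
\end{equation*}
proving the theorem. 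The main obstacle will be establishing the scaling identity $N(S, \varepsilon/3) = 2 N(S, \varepsilon)$ precisely for small $\varepsilon$: this requires carefully tracking how a greedy jump at separation $\varepsilon$ that crosses between two adjacent blocks $B_k$ and $B_{k+1}$ corresponds bijectively to exactly two greedy jumps at separation $\varepsilon/3$, where the factor-of-$3$ rescaling of gap sizes and the factor-of-$2$ doubling of block cardinalities conspire to preserve the identity.
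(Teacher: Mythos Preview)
Your choice of example and the overall outline are the same as the paper's, but the crucial step---the exact scaling identity $N(S,\varepsilon/3)=2N(S,\varepsilon)$ for all sufficiently small $\varepsilon$---is false, and without it your induction does not establish the formulas $N(S,1/3^{K})=2^{K+1}$ and $N(S,2/3^{K})=5\cdot 2^{K-2}$.

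Here is a concrete counterexample. Take $\varepsilon=2/9+\delta$ for small $\delta>0$. The greedy packing from $0$ is $\{0,\,1/3,\,16/27,\,199/243\}$, so $N(S,2/9+\delta)=4$. Now take $\varepsilon/3=2/27+\delta/3$. The greedy packing is
\[
\{0,\ 1/3,\ 4/9,\ 5/9,\ 2/3,\ 61/81,\ 202/243,\ 661/729,\ p\}
\]
for some $p\in S$ just above $715/729$, giving $N(S,2/27+\delta/3)=9\neq 8$. The heuristic that ``one greedy jump at separation $\varepsilon$ corresponds to two at $\varepsilon/3$'' breaks down exactly at block boundaries: at scale $2/9+\delta$ the jump from $1/3$ overshoots past $4/9$ and $5/9$ all the way to $16/27$, whereas at scale $2/27+\delta/3$ both of those points are retained before a later overshoot. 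More generally, exact scaling for all small $\varepsilon$ would force $N(S,\varepsilon')$ to be even for all $\varepsilon'$ below some threshold, which the value $9$ above already violates.

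The paper does not attempt any such scaling relation. Instead it proves (Theorem~\ref{thm:exact}) that for each $a\in(1,3]$ the limit $L(a)=\lim_{n\to\infty}N(S,a\cdot 3^{-n})(a\cdot 3^{-n})^{s}$ exists, by adapting the linear programming duality argument from the Main Theorem to the non-measurable setting: one bounds the jump frequencies $f(k,\omega)$ via the function $F(k,\varepsilon_{a,n})$, solves the resulting LP explicitly, and matches with a greedy lower bound. This yields $L(3)=2$ and $L(2)=5\cdot 2^{s-2}$ as \emph{limits}, not exact values for each $n$. Your target identities are consistent with these limits, but proving them (even asymptotically) seems to require the LP machinery rather than a self-similarity that $S$ does not possess.
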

Below, Sharpness Theorem \ref{thm:sharp2} demonstrates that without Property II (Monotonicity) there exists a set satisfying Property I such that $N(\Gamma,\varepsilon)$ converges at the first-order but not to the same limit prescribed by Theorem \ref{thm:main}.
\begin{theorem}\label{thm:sharp2} There exists $t\in(0,1),$ and a cut-out set $T$ satisfying Property I but not Property II, such that
$$0<\lim_{\varepsilon\to 0^+}N(T,\varepsilon)\varepsilon^{t}<L^{t}\sum_{k=1}^{\infty}\frac{k^{t}-(k-1)^{t}}{k}.$$
\end{theorem}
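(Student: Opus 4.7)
My plan is to take $T$ to be the self-similar $(1/2,1/3)$-Cantor set on $[0,1]$, i.e., the attractor of the iterated function system $\phi_1(x)=x/2$, $\phi_2(x)=2/3+x/3$. Its complementary intervals, read from left to right, are not in monotone non-increasing length order---the largest gap $(1/2,2/3)$, of length $1/6$, is flanked by a gap of length $1/12$ on the left and $1/18$ on the right, and even smaller gaps appear further to the left inside $\phi_1^2(T)$---so Property II fails.

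\emph{Property I.} The similarity dimension $t\in(0,1)$ satisfies $(1/2)^t+(1/3)^t=1$. A straightforward induction shows that at generation $n$ there are exactly $\binom{n}{k}$ complementary gaps of length $(1/6)(1/2)^k(1/3)^{n-k}$ for $0\le k\le n$. Since $\log 2/\log 3$ is irrational, the associated log-length distribution is non-lattice, and a standard non-lattice Tauberian/renewal argument yields that the number of complementary intervals of length at least $\varepsilon$ is asymptotic to $C\varepsilon^{-t}$ for an explicit $C>0$. Inverting, the non-increasing rearrangement $(l_j)_{j\ge 1}$ satisfies $l_j j^{1/t}\to L:=C^{1/t}$, so $T$ satisfies Property I, hence is Minkowski measurable by Theorem \ref{thm:meas}.

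\emph{Packing limit via renewal theory.} Because $\phi_1(T)\subset[0,1/2]$ and $\phi_2(T)\subset[2/3,1]$ are separated by the middle gap of length $1/6$, for $0<\varepsilon\leq 1/6$ any $\varepsilon$-packing of $T$ decomposes as independent packings of the two pieces, and self-similarity gives
\begin{equation*}
N(T,\varepsilon) \;=\; N(\phi_1(T),\varepsilon) + N(\phi_2(T),\varepsilon) \;=\; N(T,2\varepsilon) + N(T,3\varepsilon).
\end{equation*}
Setting $g(x):=e^{-tx}N(T,e^{-x})$ with $x=-\log\varepsilon$ and using $2^{-t}+3^{-t}=1$, this is equivalent to the renewal equation $g(x)=2^{-t}g(x-\log 2)+3^{-t}g(x-\log 3)+z(x)$, where $z$ has support in $[0,\log 6]$ and is determined by the packing of $T$ for $\varepsilon>1/6$ (a bounded, piecewise-explicit function). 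Feller's key renewal theorem in the non-lattice case then yields
\begin{equation*}
p_T \;:=\; \lim_{\varepsilon\to 0^+}N(T,\varepsilon)\varepsilon^t \;=\; \frac{\int_0^{\log 6}z(y)\,dy}{2^{-t}\log 2 + 3^{-t}\log 3} \;\in\; (0,\infty).
\end{equation*}

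\emph{Strict inequality.} The monotone rearrangement $T^*$ of $T$ has the same gap multiset, so Theorem \ref{thm:main} gives $\lim N(T^*,\varepsilon)\varepsilon^t = L^t\sum_{k\ge 1}(k^t-(k-1)^t)/k$, and the remaining task---the main obstacle---is to verify $p_T$ is strictly smaller. My approach is to compute $z$ case by case on $[0,\log 6]$ from the finitely many regimes of $N(T,\varepsilon)$ for $\varepsilon>1/6$, obtain $p_T$ in closed form from the integral above, and directly compare it against the series $L^t\sum_{k\ge 1}(k^t-(k-1)^t)/k$ (with $L$ read off from the same Tauberian constant that governs the gap-length count). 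The bulk of the technical work will lie in making this comparison rigorously rather than numerically, since the two quantities arise from very different representations (a renewal integral versus a Dirichlet-type sum); a cleaner conceptual alternative would be a general rearrangement inequality showing that any non-monotone placement of a fixed gap multiset strictly loses first-order packing efficiency, but such a general principle appears to be significantly harder and is not required here.
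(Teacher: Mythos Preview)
Your plan is essentially the paper's own: take $T$ to be the $(1/2,1/3)$ Cantor set, invoke the non-lattice renewal theorem to obtain $\mathcal{N}_t(T)$ as an integral of the ``defect'' function $z$ divided by the mean $2^{-t}\log 2+3^{-t}\log 3$, and then compare against $L^tA_t$. Two adjustments are worth making.

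First, you do not need to extract $L$ from a Tauberian constant for the gap-counting function. The paper instead uses Falconer's explicit formula for $\mathcal{M}_t(T)$ (Theorem~\ref{thm:fractalf}) together with the identity $L^tA_t=p_t\mathcal{M}_t(T)$ coming from Theorem~\ref{thm:meas} and Corollary~\ref{cor:formula}. This yields $L^tA_t$ as a closed-form expression in $t$ and the single gap $b_1=1/6$, bypassing any computation of $L$ itself.

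Second, the strict inequality is not obtained ``rigorously rather than numerically'' in the paper, and you should not expect a clean conceptual argument here. The paper computes $z(x)$ piecewise on $[0,\log 5]\cup[\log 6,\infty)$ from the explicit values $N(T,1/j)=j+1$ for $j\in\{1,2,3,4,6\}$ and $N(T,1/5)=5$, then splits into two cases on $(\log 5,\log 6)$ depending on whether $N(T,\varepsilon)$ jumps to $6$ before $\varepsilon=1/6$; in either case one gets an explicit upper bound on $\mathcal{N}_t(T)$ (at most $1.53$) that is checked numerically against a finite-sum lower bound on $p_t\mathcal{M}_t(T)$ (greater than $1.56$). Your hope for a non-numerical comparison, or for a general rearrangement inequality, goes beyond what is needed and beyond what the paper proves.
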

\par Although our results concern only subsets of $\mathbb{R},$ there is a rich history to the study of packing functions and best packing in general. This is because the task of finding exact optimizers for the packing problem, even on geometrically simple sets, can be incredibly vexing (see the \textit{Tammes Problem} \cite{PML1930}, \cite{Musin2015} for the progress on the sphere $\mathbb{S}^2$). Despite these difficulties, there are robust results that describe the first-order asymptotic behavior of $N(A,\varepsilon)$ as $\varepsilon\to 0^+$ (equivalently $\delta(A,N)$ as $N\to\infty$) on a broad range of compact sets $A$. We detail the known results in the next two sections, beginning with the asymptotic growth order of $N(A,\varepsilon)$.
\end{subsection}
\begin{subsection}{Known Asymptotic Results: Growth of $N(A,\varepsilon)$ and Minkowski measurability}
The packing function $N(A,\varepsilon)$ of a compact set $A\subset\mathbb{R}^p$ grows at a rate controlled by its upper and lower  \textit{Minkowski contents}.
\begin{definition}[Minkowski dimension and content]\label{def: mink} Let $0<d\leq p,$ and fix $r>0.$ Then denote the closed \textit{$r$-ball} about $x\in\mathbb{R}^p$ by 
$B_r(x):=\{y\in\mathbb{R}^p\text{ : }||y-x||_p\leq r\}.$ For the remainder of the manuscript, let
\begin{align*}
    \beta_{d}:=
\begin{cases}
    \frac{\pi^{d/2}}{\Gamma(d/2+1)}, & \text{ if } d\in\mathbb{N} \\
    1, & \text{ otherwise }
\end{cases}
\end{align*}
where $\Gamma(z)$ is the Gamma function. Observe that $\beta_p=m_p(B_1(0))$ if $B_1(0)\subset\mathbb{R}^p$ is the $p$-dimensional unit ball. Next, define the closed \textit{$r$-neighborhood} of a $A\subset\mathbb{R}^p$ by
$${B}_r(A):=\{y\in \mathbb{R}^p: \mathrm{dist}(y,A)\leq r\},$$
where $\mathrm{dist}(y,A):=\inf\{||x-y||_p: x\in A\}.$ 
\par 
For each compact $A\subset\mathbb{R}^p,$ we then define the \textit{lower} and \textit{upper $d$-dimensional Minkowski contents of $A$} by 
\begin{align*} \underline{\mathcal{M}}_d(A):=\liminf_{\varepsilon \to 0^+}\frac{m_p\left(B_{\varepsilon}(A)\right)}{\beta_{p-d}\varepsilon^{p-d}},\quad \text{ and } \quad  \overline{\mathcal{M}}_d(A):=\limsup_{\varepsilon \to 0^+}\frac{m_p\left(B_{\varepsilon}(A)\right)}{\beta_{p-d}\varepsilon^{p-d}},
\end{align*}
respectively. If $0<\underline{\mathcal{M}}_d(A)\leq\overline{\mathcal{M}}_d(A)<\infty,$ then we say $A$ has \textit{Minkowski dimension $d$} and write $\dim_M(A)=d.$ If $\dim_M(A)=d,$ and $\underline{\mathcal{M}}_d(A)=\overline{\mathcal{M}}_d(A),$ then we say that $A$ is \textit{Minkowski measurable of dimension $d$}, and define the \textit{$d$-dimensional Minkowski content of $A$} by $\mathcal{M}_d(A):=\underline{\mathcal{M}}_d(A)=\overline{\mathcal{M}}_d(A).$
\end{definition}
\begin{remark} Notice that for any compact $A\subset\mathbb{R}^p,$ $\mathcal{M}_p(A)=m_p(A),$ because $m_p(A(\varepsilon))\to m_p(A)$ trivially as $\varepsilon\to 0^+.$ Also, the upper and lower Minkowski contents of a bounded set and its closure are equal. Hence, much of the theory we describe carries over to bounded Borel sets.
\end{remark}
\begin{definition}[Packing constant] Suppose $A\subset\mathbb{R}^p$ is compact and $d\in(0,p].$ Define the \textit{upper and lower $d$-dimensional packing limits} of $A$ by 
$$\mathcal{\underline{N}}_d(A):=\liminf_{\varepsilon\to 0^+}N(A,\varepsilon)\varepsilon^d, \quad \text{ and }\quad \mathcal{\overline{N}}_d(A):=\limsup_{\varepsilon\to 0^+}N(A,\varepsilon)\varepsilon^d,$$
respectively. If $0<\mathcal{\underline{N}}_d(A)=\mathcal{\overline{N}}_d(A)<\infty,$ then we refer to the common limit $\mathcal{N}_d(A):=\mathcal{\underline{N}}_d(A)=\mathcal{\overline{N}}_d(A)$ as the \textit{packing limit}, or \textit{packing constant}, of $A.$
\end{definition}
Observe that 
$\delta(A,N)N^{1/d}$ converges as $N\to\infty$ if and only if $N(A,\varepsilon)\varepsilon^d$ converges as $\varepsilon\to 0^+.$ Moreover, in the case of convergence:
$$\lim_{\varepsilon\to 0^+}N(A,\varepsilon)\varepsilon^d=\lim_{N\to\infty}\delta(A,N)^d N.$$ We will harmlessly rephrase several results from the literature in terms of $N(A,\varepsilon)$ by using the above relationship.
\begin{proposition}[E.g. \cite{Mattila_2004} Chapter 5, \cite{ARVW2022} Lemma 17, \cite{Borodachov_Hardin_Saff_2007} Proposition 2.5, Inequalities 2.7 and 2.8.]\label{thm:growth} Suppose $A\subset\mathbb{R}^p$ is compact and $d\in(0,p].$ Then
$$0<\mathcal{\underline{N}}_d(A)\leq\mathcal{\overline{N}}_d(A)<\infty,$$
if and only if 
$$0<\mathcal{\underline{M}}_d(A)\leq\mathcal{\overline{M}}_d(A)<\infty.$$
\end{proposition}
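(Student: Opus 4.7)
The plan is to establish the equivalence by proving two complementary volume-counting estimates: an upper bound on $N(A,\varepsilon)\varepsilon^d$ in terms of $m_p(B_{\varepsilon/2}(A))/\varepsilon^{p-d}$ (obtained by packing disjoint balls about an $\varepsilon$-separated configuration), and an upper bound on $m_p(B_\varepsilon(A))/\varepsilon^{p-d}$ in terms of $N(A,\varepsilon)\varepsilon^d$ (obtained by covering $A$ with balls about an optimal configuration). Each inequality will carry an explicit dimensional constant involving the $\beta$ factors, and together they sandwich the upper and lower packing limits between dimensional multiples of the corresponding Minkowski contents, yielding the claimed biconditional.

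For the first estimate, I would let $\omega_N\subset A$ be any $\varepsilon$-separated configuration of size $N=N(A,\varepsilon)$; by the compactness of $A$ the maximum in the definition of $N(A,\varepsilon)$ is attained. The open balls $B_{\varepsilon/2}(x_i)$ are pairwise disjoint and contained in $B_{\varepsilon/2}(A)$, so $N\beta_p(\varepsilon/2)^p\leq m_p(B_{\varepsilon/2}(A))$. Rearranging and using $\varepsilon^{p-d}=2^{p-d}(\varepsilon/2)^{p-d}$ yields
$$N(A,\varepsilon)\varepsilon^d \;\leq\; \frac{2^d\beta_{p-d}}{\beta_p}\cdot\frac{m_p(B_{\varepsilon/2}(A))}{\beta_{p-d}(\varepsilon/2)^{p-d}}.$$
Passing to $\limsup$ and $\liminf$ as $\varepsilon\to 0^+$ (equivalently, as $\varepsilon/2\to 0^+$) produces $\overline{\mathcal{N}}_d(A)\leq C_1\overline{\mathcal{M}}_d(A)$ and $\underline{\mathcal{N}}_d(A)\leq C_1\underline{\mathcal{M}}_d(A)$ with $C_1=2^d\beta_{p-d}/\beta_p$.

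For the second estimate, observe that this same optimal $\omega_N$ is automatically maximal under inclusion among $\varepsilon$-separated subsets of $A$, since otherwise one could append an additional point and contradict the definition of $N(A,\varepsilon)$. Thus $\mathrm{dist}(a,\omega_N)\leq\varepsilon$ for every $a\in A$, so the triangle inequality gives $B_\varepsilon(A)\subseteq\bigcup_{i=1}^N B_{2\varepsilon}(x_i)$ and hence $m_p(B_\varepsilon(A))\leq N\beta_p(2\varepsilon)^p$. Dividing by $\beta_{p-d}\varepsilon^{p-d}$ and again passing to $\limsup$ and $\liminf$ yields $\overline{\mathcal{M}}_d(A)\leq C_2\overline{\mathcal{N}}_d(A)$ and $\underline{\mathcal{M}}_d(A)\leq C_2\underline{\mathcal{N}}_d(A)$ with $C_2=2^p\beta_p/\beta_{p-d}$. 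Combining all four inequalities forces the upper packing limit, lower packing limit, upper Minkowski content, and lower Minkowski content to be simultaneously positive and finite, which is exactly the stated biconditional.

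The argument contains no genuinely hard step; the main care is in verifying that $N(A,\varepsilon)$ is attained (so that the maximality-by-inclusion used in the covering estimate is rigorous) and in tracking the dimensional constants $C_1$ and $C_2$ carefully, since both the $\varepsilon/2$ versus $\varepsilon$ rescaling and the mismatch between $\beta_p$ and $\beta_{p-d}$ contribute factors that could easily be mishandled.
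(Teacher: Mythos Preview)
Your proposal is correct and follows essentially the same approach as the paper: both arguments establish the two-sided inequality in Line~\ref{eq:growth} by (i) packing disjoint $\varepsilon/2$-balls around an optimal $\varepsilon$-separated configuration into $B_{\varepsilon/2}(A)$ to get the upper bound on $N(A,\varepsilon)\varepsilon^d$, and (ii) using maximality of that configuration to cover $B_\varepsilon(A)$ by $2\varepsilon$-balls for the lower bound, with the same constants $2^d\beta_{p-d}/\beta_p$ and $\beta_{p-d}/(2^p\beta_p)$. The only cosmetic differences are the order of presentation and your explicit remark about attainment of $N(A,\varepsilon)$, which the paper leaves implicit.
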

The above result follows if there exist positive, finite constants $C_1(p,d),$ and $C_2(p,d),$ depending only on $p,$ and $d,$ such that 
\begin{align}\label{eq:growth}
    C_1(p,d)\frac{m_p(A(\varepsilon))}{\beta_{p-d}\varepsilon^{p-d}}\leq N(A,\varepsilon)\varepsilon^d\leq C_2(p,d)\frac{m_p(A(\varepsilon/2))}{\beta_{p-d}(\varepsilon/2)^{p-d}},
\end{align}
for all $0<\varepsilon\leq \text{diam}(A).$ We provide the proof of the above in Section \ref{sec:lemmas} and
obtain the next result as a corollary of that proof.
\begin{corollary}\label{cor:growth} Suppose $A\subset\mathbb{R}$ is an infinite compact set of Minkowski dimension $d\in(0,1).$ Then
$$\frac{\mathcal{\underline{M}}_d(A)}{4}\leq \mathcal{\underline{N}}_d(A)\leq\mathcal{\overline{N}}_d(A)\leq2^{d-1}\mathcal{\overline{M}}_d(A).$$ 
\end{corollary}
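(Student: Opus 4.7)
The approach is to specialize the proof of Proposition \ref{thm:growth}---in particular the two-sided bound (\ref{eq:growth})---to the case $p=1$, $d\in(0,1)$, and to track the constants $C_1(1,d)$, $C_2(1,d)$ explicitly. Two features of the one-dimensional, non-integer-codimension setting drive the sharper constants: $\beta_{1-d}=1$ because $1-d\notin\mathbb{N}$, and closed balls $B_r(x)\subset\mathbb{R}$ are intervals of length $2r$, so packing arguments lose no geometric factor when passing between balls and line segments.

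For the upper bound $\overline{\mathcal{N}}_d(A)\leq 2^{d-1}\overline{\mathcal{M}}_d(A)$, I would start from an optimal packing configuration $\omega_N=\{x_1,\ldots,x_N\}\subset A$ achieving $N=N(A,\varepsilon)$ with $\delta(\omega_N)\geq\varepsilon$ (the maximum is realized by compactness of $A$). Ordering the points along $\mathbb{R}$, the closed intervals $B_{\varepsilon/2}(x_i)$ are pairwise disjoint except possibly at endpoints, each has length $\varepsilon$, and all lie in $A(\varepsilon/2)$, whence
$$N(A,\varepsilon)\cdot\varepsilon\leq m_1(A(\varepsilon/2)).$$
Multiplying both sides by $\varepsilon^{d-1}=2^{d-1}(\varepsilon/2)^{d-1}$ and taking $\limsup_{\varepsilon\to 0^+}$, together with $\beta_{1-d}=1$, yields the claim.

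For the lower bound $\underline{\mathcal{N}}_d(A)\geq\underline{\mathcal{M}}_d(A)/4$, I would invoke the defining maximality of $N(A,\varepsilon)$: for any $y\in A\setminus\omega_N$, the augmented configuration $\omega_N\cup\{y\}$ has $N+1$ points in $A$, so its separation must be $<\varepsilon$, forcing $\|y-x_i\|<\varepsilon$ for some $i$. Therefore $A\subset\bigcup_i B_\varepsilon(x_i)$, which in turn gives $A(\varepsilon)\subset\bigcup_i B_{2\varepsilon}(x_i)$ and so $m_1(A(\varepsilon))\leq 4\,N(A,\varepsilon)\,\varepsilon$. Dividing by $4\varepsilon^{1-d}$ and taking $\liminf_{\varepsilon\to 0^+}$ delivers the desired bound.

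The main subtlety is not a deep obstruction but bookkeeping of the factor $2^{d-1}$ in the upper-bound step: one must recognize that the disjoint packing intervals of length $\varepsilon$ naturally sit in $A(\varepsilon/2)$ rather than in $A(\varepsilon)$, and convert the resulting $(\varepsilon/2)^{d-1}$ into $\varepsilon^{d-1}$ by the single rescaling $\varepsilon^{d-1}=2^{d-1}(\varepsilon/2)^{d-1}$. Everything else is a direct specialization of the general $\mathbb{R}^p$ argument.
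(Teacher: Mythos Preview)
Your proposal is correct and follows essentially the same route as the paper: you specialize the proof of Proposition~\ref{thm:growth} to $p=1$, $d\in(0,1)$, using that $\beta_{1-d}=1$ and $m_1(B_r(x))=2r$, which yields exactly $C_1(1,d)=1/4$ and $C_2(1,d)=2^{d-1}$. The paper states the corollary as an immediate consequence of those constants, and your argument is precisely that specialization written out.
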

\par Next, we describe how for cut-out sets, Property I of Theorem \ref{thm:main} is equivalent to Minkowski measurability at a given Minkowski dimension $d\in(0,1)$.
\par To prove the one-dimensional Weyl-Barry Conjecture, (see \cite{Lapidus1993}) Lapidus and Pomerance discovered a characterization of Minkowski measurability among boundaries of open subsets of $\mathbb{R}$ that applies to cut-out sets. Falconer later used a dynamical systems argument to concisely prove the result we use below. 
\begin{theorem}[Falconer \cite{Falconer_1995}]\label{thm:meas} Fix $d\in(0,1),$ and suppose $\Gamma=I\setminus\bigcup_{j=1}^{\infty}I_j$ is a cut-out set, with sequence of gap lengths $(l_j)_{j=1}^{\infty}$. Further, let 
$$\underline{L}=\liminf_{j\to\infty}l_jj^{1/d}, \quad \text{ and } \quad \overline{L}=\limsup_{j\to\infty}l_jj^{1/d}.$$ Then
\begin{enumerate}
    \item $\Gamma$ has  Minkowski dimension $d$ if and only if $$0<\underline{L}\leq\overline{L}<\infty.$$
    \item\label{char2} $\Gamma$ is Minkowski measurable of dimension $d$ if and only if $$0<\underline{L}=\overline{L}<\infty.$$ 
\end{enumerate} In Case \ref{char2}, if we denote the common limit by $L=\lim_{j\to\infty}l_jj^{1/d}=\underline{L}=\overline{L},$ then the Minkowski content of $\Gamma$ satisfies  
$$\mathcal{M}_d(\Gamma)=\frac{2^{1-d}}{1-d} L^d.$$
\end{theorem}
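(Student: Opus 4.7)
The plan is to derive a direct identity for $m_1(B_\varepsilon(\Gamma))$ in terms of the gap-length sequence $(l_j)$ and then pass between asymptotics of the gaps and asymptotics of the content by a Tauberian-type argument that exploits monotonicity of the counting function.

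First I would compute $m_1(B_\varepsilon(\Gamma))$ exactly. For each complementary interval $I_j$, the part of $I_j$ uncovered by $B_\varepsilon(\Gamma)$ is either empty (when $l_j \le 2\varepsilon$) or an interior subinterval of length $l_j - 2\varepsilon$ (when $l_j > 2\varepsilon$). Adding the contribution $2\varepsilon$ from the two ends of $I$ that stick outside, letting $f(s) := \#\{j : l_j > s\}$ and telescoping yields
$$m_1(B_\varepsilon(\Gamma)) \;=\; 2\varepsilon \;+\; 2\varepsilon\, f(2\varepsilon) \;+\; \sum_{l_j \le 2\varepsilon} l_j \;=\; 2\varepsilon \;+\; \int_0^{2\varepsilon} f(s)\,ds.$$
Since $\beta_{1-d} = 1$ for $d \in (0,1)$, it follows that $\mathcal{M}_d(\Gamma)$ and $\int_0^{2\varepsilon} f(s)\,ds / \varepsilon^{1-d}$ have the same limit (the $2\varepsilon$ term is negligible compared to $\varepsilon^{1-d}$).

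For the forward direction of parts (1) and (2), I would substitute the hypothesis $l_j \sim L j^{-1/d}$, which is equivalent to $f(s) \sim L^d s^{-d}$ as $s \to 0^+$, into the integral identity and compute
$$\int_0^{2\varepsilon} f(s)\,ds \;\sim\; L^d \int_0^{2\varepsilon} s^{-d}\,ds \;=\; \frac{L^d (2\varepsilon)^{1-d}}{1-d}.$$
Dividing by $\varepsilon^{1-d}$ and letting $\varepsilon \to 0^+$ recovers $\mathcal{M}_d(\Gamma) = \frac{2^{1-d}}{1-d} L^d$, which proves part (3) and the ``if'' half of parts (1) and (2). The same computation, taking liminf and limsup on each side, handles the two-sided bounds in (1) and establishes equivalence of measurability with $\underline{L} = \overline{L}$.

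The reverse direction is the main obstacle: given that $\int_0^{2\varepsilon} f(s)\,ds$ is asymptotically of order $\varepsilon^{1-d}$, I must recover pointwise asymptotics of $f(s)$ itself, hence of $l_j j^{1/d}$. Here I would exploit the fact that $f$ is non-increasing. For any $\eta \in (0,1)$ and $s > 0$,
$$(s/\eta - s)\, f(s/\eta) \;\le\; \int_{s}^{s/\eta} f(u)\,du \;\le\; (s/\eta - s)\, f(s),$$
so plugging in the integrated asymptotic on the left and letting $\eta \to 1^-$ (a Karamata-style squeeze) pins down $f(s) \sim L^d s^{-d}$ in the measurable case and gives the matching liminf/limsup bounds in the dimension case. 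The hard part is making this squeeze quantitative enough to preserve strict inequalities $\underline{L} \le \overline{L}$ versus equality, since mismatched Tauberian constants normally widen gaps; Falconer's original proof sidesteps this by a flow/renewal argument on the shift dynamics of the gap sequence, and I would follow that route if the elementary squeeze fails to be sharp. Finally, inverting the relation $f(s) \sim L^d s^{-d}$ using $f(l_j^+) \ge j$ and $f(l_j^-) \le j$ converts back to $l_j j^{1/d} \to L$, completing the characterization.
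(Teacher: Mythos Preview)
The paper does not supply its own proof of this theorem; it is quoted as a known result, attributed to Lapidus--Pomerance and to Falconer's later dynamical-systems argument. The only fragment of the argument reproduced in the paper is the identity in Remark~\ref{re},
\[
m_1(\Gamma(\varepsilon))=\sum_{j=n+1}^{\infty}l_j+2(n+1)\varepsilon\quad\text{for }\varepsilon\in[l_{n+1}/2,\,l_n/2],
\]
which is exactly your formula $m_1(B_\varepsilon(\Gamma))=2\varepsilon+\int_0^{2\varepsilon}f(s)\,ds$ rewritten with $n=f(2\varepsilon)$.

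Your approach is essentially the original Lapidus--Pomerance route rather than Falconer's dynamical one, and it is sound. The forward implications and the content formula follow immediately from your integral identity. For the converse you hedge unnecessarily: the monotone density (Karamata) squeeze \emph{is} sharp here. Once you know $c\,u^{1-d}\le \int_0^u f(s)\,ds\le C\,u^{1-d}$ for all small $u$, monotonicity of $f$ gives $uf(u)\le\int_0^u f$ for the upper bound on $f(u)u^d$, and for the lower bound you take $\lambda>(C/c)^{1/(1-d)}$ and use
\[
(\lambda-1)\,u\,f(u)\ \ge\ \int_u^{\lambda u}f\ =\ \int_0^{\lambda u}f-\int_0^u f\ \ge\ (c\lambda^{1-d}-C)\,u^{1-d}>0,
\]
which yields $\underline{L}>0$ and completes part~(1). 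For part~(2) the same trick with $\lambda\to 1^+$ and $\lambda\to 1^-$ (i.e., the standard monotone density theorem) recovers $f(s)\sim L^d s^{-d}$ exactly, so no appeal to Falconer's flow argument is needed. Your final inversion from $f(s)$ back to $l_j j^{1/d}$ via $f(l_j^-)$ and $f(l_j^+)$ is correct.
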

\begin{remark}\label{re}
    Notice that the formula for $\mathcal{M}_d(\Gamma)$ depends only on the sequence of gap lengths $(l_j)_{j=1}^{\infty},$ and not the geometric arrangement of the intervals $I_j\subset I$. As presented in the proof of Proposition 2 from \cite{Falconer_1995} (Equation 16); for any $\varepsilon\in(0,l_1/2],$ there exists $n\geq 1$ such that $\varepsilon\in \left[l_{n+1}/2,l_n/2\right],$ and 
$$m_1(\Gamma(\varepsilon))=\sum_{j=n+1}^{\infty}l_j+2(n+1)\varepsilon.$$ Hence, for any $\Gamma'\in\mathcal{R}(\Gamma),$ $m_1(\Gamma'(\varepsilon))=m_1(\Gamma(\varepsilon)).$ Thus, the upper and lower Minkowski contents of cut-out sets are rearrangement invariant.
\end{remark}  
\begin{example}\label{exa}  
Let $\zeta(s)$ be the classical Riemann zeta function (which is meromorphic over $s\in\mathbb{C},$ with a single simple pole at $s=1$) and fix $d\in(0,1)$. Then the set
$$\Gamma_d:=\left\{\sum_{j=1}^{k}\frac{1}{j^{1/d}}:k\geq 1\right\}\cup\ \left\{\zeta\left({1/d}\right)\right\}$$  clearly satisfies Property II (Monotonicity). $\Gamma_d$ also satisfies Property I, with $L=1;$ thus $\Gamma_d$ is Minkowski measurable by Theorem \ref{thm:meas}, and the Main Theorem applies to $\Gamma_d.$
\end{example}
Notice that the example set $S$ considered in Theorem \ref{thm:sharp1} has Minkowski dimension $s\in(0,1),$ but is not Minkowski measurable. The example set $T$ provided in Theorem \ref{thm:sharp2} is Minkowski measurable of dimension $t\in(0,1).$ 
\par 
In the next section, we describe the known results from the literature that provide necessary or sufficient conditions for the existence of $\mathcal{N}_d(A).$ We compare and contrast these with our main results; Theorems \ref{thm:main}, \ref{thm:sharp1}, and \ref{thm:sharp2}, throughout.
\end{subsection}
\begin{subsection}{Known Asymptotic Results: Existence of Packing Constants}\label{sec:known} 
Kolmogorov and Tikhomirov studied the first-order asymptotics of $N(A,\varepsilon)$ to better understand the connections to information theory in \cite{kolmogorov1959}. From this perspective, a set of $N(A,\varepsilon)$ points from $A$ with separation at least $\varepsilon$ provides a system of reliably different signals with which one can fix for transmission or store any binary signal of length less than $\log_2(N(A,\varepsilon));$ the $\varepsilon$-\textit{capacity} of $A$. 
\par In Theorem IX of \cite{kolmogorov1959}, the authors proved that
$$\lim_{\varepsilon\to 0^+}N(A,\varepsilon)\varepsilon^p=C_p m_p(A),$$
for a positive constant $C_p$ depending only on $p,$ provided that $A$ is \textit{Jordan measurable}; that is, so long as $m_p(\partial A)=0,$ where $\partial A$ is the topological boundary of $A.$ Notice, if $m_p(A)>0,$ then $0<\mathcal{N}_p(A)<\infty,$ as $A$ is compact.
\par Many decades passed until Borodachov, Hardin, and Saff proved (see \cite{Borodachov_Hardin_Saff_2007}) that the assumption of Jordan measurability is unnecessary for sets of full dimension. To fully describe their result, we first discuss the notion of a \textit{largest Euclidean sphere packing density}.
\begin{definition}[Largest Euclidean sphere packing density $\Delta_p$] Let $\Lambda_p$ denote the set of collections $\mathcal{P}$ of non-overlapping unit balls in $\mathbb{R}^p$ such that the density 
$$\rho(\mathcal{P}):=\lim_{r\to\infty}(2r)^{-p}m_p\left(\bigcup_{B\in\mathcal{P}} B\cap [-r,r]^p\right)$$
exists. Then the \textit{largest Euclidean sphere packing density} is defined by 
$$\Delta_p:=\sup_{\mathcal{P}\in\Lambda_p}\rho(\mathcal{P}).$$
\end{definition} 
The values of $\Delta_p$ are only known for $p=1,2,3,8,$ and $24$ (\cite{Fejes_1942}, \cite{Hales_2005},\cite{Viazovska_2017},\cite{Cohn_Kumar_Miller_Radchenko_Viazovska_2017}). Attesting to the difficulty of this problem, we note that Kepler conjectured the correct value of $\Delta_3$ in 1611, but this fact was not verified until the $21^{st}$ century by a computer-assisted proof of Hales.  
\par Our new results contrast with the following weak form of Theorem 2.1 from \cite{Borodachov_Hardin_Saff_2007}.
\begin{theorem}[Borodachov, Hardin, and Saff \cite{Borodachov_Hardin_Saff_2007} Theorem 2.1]\label{thm:full} Suppose $A$ is an infinite compact subset of $\mathbb{R}^p.$ Then
$$\lim_{\varepsilon\to 0^+}N(A,\varepsilon)\varepsilon^p=C_p m_p(A)$$  
where $C_p=2^p\Delta_p/\beta_p.$ Moreover, if $m_p(A)>0,$ and $(\omega_N)_{N=1}^{\infty}$ is a sequence of optimizers for $\delta(A,N),$ then the sequence of counting measures
$\nu_N:=\frac{1}{N}\sum_{x\in \omega_N}\delta_x$
converges to the normalized Lebesgue measure $m_p(A\cap \cdot)/m_p(A)$ in the weak$^*$-topology of Borel probability measures on $A.$
\end{theorem}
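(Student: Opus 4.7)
The plan is to prove the limit as a matching pair of upper and lower bounds and then deduce the weak-$*$ convergence claim from the sharp asymptotic. The case $m_p(A) = 0$ follows from the trivial packing bound: the disjoint balls $B_{\varepsilon/2}(x_i)$ around the points of an $\varepsilon$-separated set in $A$ all lie in $B_{\varepsilon/2}(A)$, so $N(A,\varepsilon)\varepsilon^p \leq 2^p m_p(B_{\varepsilon/2}(A))/\beta_p$, which tends to $0$ by compactness of $A$. The substantive work is therefore the case $m_p(A) > 0$, where the constant $C_p = 2^p \Delta_p/\beta_p$ has to arise from both sides.

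For the lower bound I would use a translation-averaging argument. Fix a periodic packing $\mathcal{P}$ of $\mathbb{R}^p$ by unit balls whose density is within $\eta > 0$ of $\Delta_p$; after rescaling by $\varepsilon/2$, the centers form an $\varepsilon$-separated set $\Omega$ invariant under a lattice $\Lambda_\varepsilon$ with fundamental domain $D_\varepsilon$. For each $t \in D_\varepsilon$ the translate $\Omega + t$ is still $\varepsilon$-separated, and a Fubini computation yields
$$\int_{D_\varepsilon} \#\bigl((\Omega + t)\cap A\bigr)\, dt = \frac{\rho(\mathcal{P})\, m_p(A)\, m_p(D_\varepsilon)}{\beta_p (\varepsilon/2)^p}.$$
Dividing by $m_p(D_\varepsilon)$ shows that some translate $(\Omega + t^*) \cap A$ is an $\varepsilon$-separated subset of $A$ of cardinality at least $2^p \rho(\mathcal{P}) m_p(A)/(\beta_p \varepsilon^p)$. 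Letting $\varepsilon \to 0^+$ and then $\rho(\mathcal{P}) \to \Delta_p$ gives $\liminf N(A,\varepsilon)\varepsilon^p \geq C_p m_p(A)$.

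For the upper bound I would localize the density estimate. Partition $\mathbb{R}^p$ by a mesh of cubes of side length $R = R(\varepsilon)$ chosen so that $R \to 0$ and $\varepsilon/R \to 0$ as $\varepsilon \to 0^+$. In each cube $Q$, the balls $B_{\varepsilon/2}(x_i)$ centered at the points of an optimal $\varepsilon$-separated configuration for $A$ form a finite packing in a slight enlargement of $Q$; by the definition of $\Delta_p$ as an asymptotic supremum density, this packing has volume density at most $\Delta_p + \eta$ once $R/\varepsilon$ is large enough. Summing this local bound over cubes that meet $A$ and observing that the union of such cubes lies in the thickening $B_{R\sqrt{p}}(A)$, whose measure tends to $m_p(A)$ by compactness, produces $\limsup N(A,\varepsilon)\varepsilon^p \leq 2^p(\Delta_p + \eta) m_p(A)/\beta_p$. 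Sending $\eta \to 0$ then closes the bound.

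The main obstacle here is the local density lemma underlying the upper bound: converting the asymptotic definition of $\Delta_p$ into a density estimate valid uniformly on large but finite cubes requires a standard subadditivity argument for finite-region sphere packings, with additional care at the cube boundaries where balls can straddle neighboring cells. With the two-sided asymptotic in hand, the weak-$*$ convergence follows by a test-set argument. For any open set $U \subset \mathbb{R}^p$ with $m_p(A \cap \partial U) = 0$, applying the sharp asymptotic separately to $A \cap \overline{U}$ and $A \setminus U$, together with the asymptotic optimality of $(\omega_N)$, forces $\#(\omega_N \cap U)/N \to m_p(A \cap U)/m_p(A)$; since such sets are convergence-determining for Borel probability measures supported on $A$, this identifies the weak-$*$ limit of $(\nu_N)$ as $m_p(A \cap \cdot)/m_p(A)$.
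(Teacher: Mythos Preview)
The paper does not give its own proof of this theorem; it is quoted from Borodachov--Hardin--Saff and only the strategy is summarized. That summary differs substantially from what you propose: the paper describes the original argument as first handling the cube $[0,1]^p$ via its self-similarity, then using measure-theoretic additivity of the set functions $\overline{g}_p$ and $\underline{g}_p$ to pass from cubes to general compact sets, and finally obtaining the non--Jordan-measurable case by taking $s\to\infty$ in the analogous minimal Riesz $s$-energy theorem. Your route is a direct packing argument that never touches Riesz energy: translation-averaging of a near-optimal periodic packing for the lower bound, a cube-mesh localization of the packing-density supremum for the upper bound, and a test-set comparison for the weak-$*$ limit. The advantage of your approach is that it is self-contained and makes the constant $C_p$ appear transparently from the density $\Delta_p$; the BHS route, by contrast, inherits the full strength of the Riesz-energy machinery (in particular, it extends immediately to the rectifiable lower-dimensional case the paper mentions in the following remark), at the cost of being far less elementary.

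Your outline is essentially sound, but two ingredients are doing real work that you have only named. For the lower bound you need that $\Delta_p$ is approximated by \emph{periodic} packings, which is standard but not literally the definition the paper gives. For the upper bound the step ``this packing has volume density at most $\Delta_p+\eta$ once $R/\varepsilon$ is large enough'' is exactly the uniform finite-volume density bound for sphere packings; this is where the subadditivity/periodization argument you allude to must be carried out, and it is the genuine content of the upper half. Your weak-$*$ argument is correct once one notes that $\delta(A,N)^p N\to C_p m_p(A)$ lets you compare $\#(\omega_N\cap\overline{U})$ against $N(A\cap\overline{U},\delta(A,N))$ and likewise on the complement, so that the two $\limsup$ bounds squeeze to equality.
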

\begin{remark} For all cut-out sets $\Gamma=I\setminus\cup_{j=1}^{\infty}I_j\subset\mathbb{R},$ $m_1(\Gamma)=m_1(I)-\sum_{j=1}^{\infty}m_1(I_j)=0,$ so $\lim_{\varepsilon\to 0^+}N(\Gamma,\varepsilon)\varepsilon=0,$ and the above distribution result does not apply. Thus, Theorem \ref{thm:full} is mostly unhelpful for describing the asymptotic behavior of $N(\Gamma,\varepsilon)$. 
\end{remark} 
\begin{remark}
The strong form of Theorem 2.1 applies to certain smoother, lower-dimensional sets of measure $0$ that are Minkowski measurable. 
\par Indeed, suppose $d$ is a positive integer at most $p,$ and let $\mathcal{H}_d$ denote the $d$-dimensional Hausdorff measure on $\mathbb{R}^p,$ normalized so that $\mathcal{H}_d([0,1]^d)=1$ for any isometric $d$-dimensional unit cube $[0,1]^d\subset\mathbb{R}^p.$ Then (see Federer \cite{Fed96} Theorem 3.2.39) $\mathcal{H}_d(A)=\mathcal{M}_d(A)$ for all \text{$d$-rectifiable} sets $A\subset \mathbb{R}^p$. $A$ is \textit{$d$-rectifiable} if it is the image of a compact subset of $\mathbb{R}^d$ under a Lipschitz map. The strong form of Theorem 2.1 of \cite{Borodachov_Hardin_Saff_2007} provides that if $\mathcal{H}_d(A)<\infty,$ $\mathcal{H}_d(A)=\mathcal{M}_d(A),$ and $A$ is a countable union of $d$-rectifiable sets together with an $\mathcal{H}_d$ null set, then $$\lim_{\varepsilon\to 0^+}N(A,\varepsilon)\varepsilon^d=C_d\mathcal{H}_d(A).$$ Moreover, when $\mathcal{H}_d(A)>0,$ the optimizing measures $\nu_N$ (as defined in Theorem \ref{thm:full}) converge weakly to $\mathcal{H}_d(A\cap \cdot)/\mathcal{H}_d(A)$ on $A.$ Because the cut-out sets $\Gamma$ that we study are subsets of $\mathbb{R},$ the strong form of Theorem 2.1 does not provide any immediate strengthening over Theorem \ref{thm:full} in this context. However, the Minkowski measurability assumption further motivates our current study.
\end{remark} The proofs of Theorem IX and Theorem \ref{thm:full} fundamentally rely upon analysis of the special case $A=[0,1]^p$ (particularly the  self-similarity of the cube), as well as measure theoretic additivity properties of the set functions  
$$\overline{g}_p(E):=\left(\limsup_{N\to\infty}\delta(E,N)N^{1/p}\right)^p, \quad \text{and}\quad  \underline{g}_p(E):=\left(\liminf_{N\to\infty}\delta(E,N)N^{1/p}\right)^p,\quad E\subset\mathbb{R}^p.$$ The authors of Theorem 15 then obtain their extension of Theorem IX by performing a limiting argument on an analogous result for discrete minimal Riesz $s$-energy (see \cite{HARDIN2005} Theorem 2.1). Indeed, as $s\to\infty,$ the minimal Riesz $s$-energy problem approximates the best packing problem. For a modern exposition of the connections to Riesz $s$-energy, we recommend \cite{BHS2019}.  
\par 
Some results also address the asymptotics of $N(A,\varepsilon)$ on broader classes of sets without establishing the existence of $\mathcal{N}_d(A)$. For example, in Theorem 3 of \cite{Boro2012}, Borodachov proved that for all sets of finite $d$-dimensional packing premeasure $P^d(A)<\infty$, $0<d\leq p,$ the equality  $\overline{g}_d(A)=\nu_{\infty,d}(A)$ holds for a non-trivial outer measure $\nu_{\infty,d}$ obtained via the method I construction (the definitions of $P^d$ and  $\nu_{\infty,d}$ are contained therein). This result \textit{does not}, however, establish the convergence equality $\underline{g}_{d}(A)=\overline{g}_{d}(A)=\nu_{\infty,d}(A)$. 
\par Nonetheless, smoothness, in particular rectifiability, is not always necessary for the convergence of first-order packing asymptotics. To see this, we discuss the known results for packing on \textit{self-similar fractals}.
\begin{definition}[Self-similar fractals]\label{def:self-similar} A mapping $\psi:\mathbb{R}^p\to\mathbb{R}^p$ is a \textit{similitude} if $\psi(x)=rOx+z,$ for a \textit{contraction ratio} $0<r<1,$ an orthogonal matrix $O\in\mathcal{O}(p),$ and a translation $z\in\mathbb{R}^p.$ Each finite set of similitudes $\{\psi_i\}_{i=1}^{M}$ on $\mathbb{R}^p$ determines a unique, non-empty, compact \textit{invariant} set
$$A=\bigcup_{i=1}^{M}\psi_i(A)$$
(see \cite{hutch} Section 3.1, or alternatively \cite{Falconer_1995} Section 9.1).
If $A\subset\mathbb{R}^p$ is invariant under a set of similitudes $\{\psi_i\}_{i=1}^{M},$ and $\psi_i(A)\cap\psi_j(A)=\emptyset$ for each $i\neq j,$ then we say that $A$ is a \textit{self-similar fractal}.\footnote{The requirement that $\psi_i(A)\cap\psi_j(A)=\emptyset$ can often be replaced by a weaker separation condition known as the \textit{open set condition} in many of the results we shall describe.} 
\end{definition}
It is well known that the Hausdorff dimension and Minkowski dimension of a self-similar fractal $A\subset\mathbb{R}^p$ with contractions $\{r_i\}_{i=1}^{M}$ are the same and this common dimension $\dim_M(A)=d\leq p$ satisfies \textit{Moran's equation} depicted below (see \cite{Moran_1946} for Moran's original proof for Hausdorff dimension):
\begin{align}\label{eq:dimensionssf}
    \sum_{i=1}^{M}r_i^d=1.
\end{align} 
Specifically, $0<\mathcal{H}_d(A)<\infty,$ and $0<\mathcal{\underline{M}}_d(A)\leq\mathcal{\overline{M}}_d(A)<\infty$ (see \cite{hutch}). Moran's equation continues to profoundly impact fractal geometry nearly 80 years after its discovery. We recommend \cite{Martinez} for a modern re-exploration of this cornerstone result.  
\par 
The normalized packing $N(A,\varepsilon)\varepsilon^d$ of a self-similar fractal converges as $\varepsilon\to 0^+$ if and only if the set of contractions is multiplicatively independent. We elaborate on this fact below.
\begin{definition}\label{def:indep} A self-similar fractal $A\subset\mathbb{R}$ with contraction ratios $\{r_i\}_{i=1}^{M}$ is \textit{independent} if the set 
$$\left\{\sum_{i=1}^{M}a_i\log(r_i)\text{ : } a_i\in\mathbb{Z}, i=1,\ldots,M \right\}$$
is dense in $\mathbb{R}.$ Otherwise, the above set is an integer lattice: that is, there exists some $h>0$ such that
$$\left\{\sum_{i=1}^{M}a_i\log(r_i)\text{ : } a_i\in\mathbb{Z}, i=1,\ldots,M \right\}=h\mathbb{Z}.$$ In this case we say $A$ is \textit{dependent}. 
\end{definition}   
Lalley used a probabilistic method known as \textit{renewal theory} to show that independent self-similar fractals, which are purely unrectifiable, have convergent first-order packing asymptotics \cite{Lalley}. There, he also established the existence of packing asymptotics along explicit subsequences on dependent self-similar fractals. Later, the first author and Reznikov built upon this to show that dependent self-similar fractals do not have convergent first-order packing asymptotics (\cite{Anderson_Reznikov_2023} Theorem 4.1). Relatedly,
Corollary 1 Equation (18) of \cite{Boro2012} describes how first-order packing asymptotics evolve on compact subsets of an independent self-similar fractal.
\par Gatzouras also used renewal theory to prove that independent self-similar fractals are Minkowski measurable \cite{Gatzouras_1999}, and Kombrink and Winter showed that dependent self-similar fractals $A\subset\mathbb{R}$ are not Minkowski measurable \cite{Kombrink_Winter_2018b}. Thus, Minkowski measurability, independence, and the existence of first-order packing asymptotics are all equivalent on self-similar fractal subsets of $\mathbb{R}.$
\par The renewal theory approach to the packing problem on self-similar fractals is the framework for our proof of Theorem \ref{thm:sharp2}.
\end{subsection}
\begin{subsection}{Further New Results and Examples}\label{sec:main_results} The following asymptotic notations aid with the computations in various results throughout the manuscript.
\begin{definition}[Asymptotic Notations]\label{def:o-not} If $f$ and $g$ are two real-valued functions of a real variable, and $g(x)\geq 0,$ we write
$f(x)\sim g(x),$ as $x\to\infty,$ or $x\to 0^+,$
if $\lim_{x\to\infty}f(x)/g(x)=1,$ or $\lim_{x\to 0^+}f(x)/g(x)=1,$ respectively.
In addition, we write $f(x)\sim 0,$ as $x\to\infty,$ or $x\to 0^+,$ if $\lim_{x\to\infty}f(x)=0,$ or $\lim_{x\to 0^+}f(x)=0,$ respectively.
\par Next, we write
$f(x)=o(g(x)),$ as $x\to\infty$, or $x\to 0^+,$ if 
$f(x)/g(x)\sim 0$ in the corresponding limiting sense.
\par 
Lastly, we shall write $f(x)=O(g(x)),$ as $x\to\infty,$ or $x\to 0^+,$ if there exist constants $0<a\leq b<\infty,$ such that 
$ag(x)\leq f(x)\leq bg(x),$ for all sufficiently large or small $x>0,$ respectively.
Note that these definitions also naturally carry over to infinite sequences $a_n,$ as $n\to\infty$.
\end{definition}
\par Observe that the self-similar sets described in Definition \ref{def:self-similar} are particular examples of cut-out sets $\Gamma=I\setminus \cup_{j=1}^{\infty}I_j.$ If $I$ is the smallest closed interval containing the self-similar fractal $A=\cup_{i=1}^{M}\psi_i(A),$ then  
$$A=\bigcap_{k=1}^{\infty}A_k,$$
where $$A_k=\bigcup_{i_1,\ldots,i_k}\psi_{i_1}\circ\cdots\circ\psi_{i_k}(I),$$
and the union is over all sequences $(i_1,\ldots,i_k)\in\{1,\ldots,M\}^k.$ By reordering the indices of the similarities $\psi_i$ we may further assume that 
$A$ is constructed with $\psi_1(I)\ldots,\psi_M(I)$ subintervals of $I,$ occurring in the same order (from left to right) with $\min\psi_1(I)=\min I,$ and $\max\psi_M(I)=\max I.$ In the sequel, assume without loss of generality that $I=[0,1],$ and refer to the size of the gap between $\psi_{i}(I)$ and $\psi_{i+1}(I)$ by the quantity $b_i$ for each $1\leq i\leq M-1.$
\begin{example}[Montone rearrangement of the 1/3 Cantor set]\label{ternary}
Let $S=[0,1]\setminus\left(\cup_{j=1}^{\infty}I_j\right)$
with intervals $I_j$ satisfying
$m_1(I_j)=l_j=3^{-k},$ for $2^{k-1}\leq j\leq 2^{k}-1,$ $k\geq 1,$ arranged to satisfy Property II. Then notice $A$ is constructed from the same complementary intervals as the 1/3 Cantor set $C$. The 1/3 Cantor set is a dependent self-similar fractal with contractions $r_1=r_2=1/3$ and a singular gap $b_1=1/3,$ so its Minkowski dimension agrees with that of $S$ (Theorem \ref{thm:meas}) and $\dim_M(C)=\dim_M(S)=s$ satisfies
$$\left(\frac{1}{3}\right)^s+\left(\frac{1}{3}\right)^s=2\left(\frac{1}{3}\right)^s=1.$$
Thus $s=\log2/\log3\in(0,1).$\footnote{By $\log x$ we shall always mean the natural logarithm, base $e$} Because $r_1=r_2,$ $C$ is dependent, thus not Minkowski measurable. Hence $S$ is not Minkowski measurable. Using Theorem \ref{thm:meas} it is not difficult to show this directly. We recount the arguments of \cite{Lapidus1993}, Theorem 4.5 below.
\par Since $\sum_{k=1}^{n}2^{k-1}=2^n-1,$ for $n\geq 1,$ we have
$$l_{2^n}=3^{-n-1},\quad \text{ while }\quad l_{2^n-1}=3^{-n}.$$
Therefore
$$\liminf_{j\to\infty}l_j j^{1/s}\leq\lim_{n\to\infty}l_{2^n}(2^n)^{\log_2(3)}=\lim_{n\to\infty}3^{-n-1}3^n=\frac{1}{3},$$
while
$$\limsup_{j\to\infty}l_j j^{1/s}\geq\lim_{n\to\infty}l_{2^n-1}(2^n-1)^{\log_2(3)}=\lim_{n\to\infty}3^{-n}3^n=1>\frac{1}{3}.$$
Thus, by Theorem \ref{thm:meas}, $S$ is \textit{not} Minkowski measurable, and $S$ satisfies Property II but not Property I. 
\end{example}
\begin{example}[The $(1/2,1/3)$ Cantor Set]\label{ex:sharp2}
The example set $T$ of Theorem \ref{thm:sharp2} is the self-similar fractal $T\subset[0,1]$ with similitudes $\psi_1(x)=\frac{1}{2}x,$ and $\psi_2(x)=\frac{1}{3}x+\frac{2}{3},$ and center gap $b_1=1/6.$ Because $\log2/\log3\notin\mathbb{Q},$ $T$ is independent, with convergent packing asymptotics and Minkowski measurable at its Minkowski dimension $t$ solving
$$\left(\frac{1}{2}\right)^t+\left(\frac{1}{3}\right)^t=1.$$
Simultaneously, $T$ is highly non-monotone. Thus, $T$ satisfies Property I but not Property II.
\end{example}
\par Moving on, notice that for each $d\in(0,1)$ the series
$$A_d:=\sum_{k=1}^{\infty}\frac{k^d-(k-1)^d}{k},$$
converges to a finite, positive number. Indeed, $\left({k^d-(k-1)^d}\right)/{k}\sim d k^{d-2}$ as $k\to\infty,$ so the series converges by the Limit Comparison Test. To obtain the limit, one may first apply the Mean Value Theorem to the function $f(x)=x^{d}$ on the interval $[k-1,k],$ extracting a constant $c_k\in[k-1,k]$ such that 
$$\frac{k^d-(k-1)^d}{k}=\frac{f'(c_k)}{k}=\frac{dc_k^{d-1}}{k}.$$
Taking the quotients of the two sides above, then allowing $k$ to tend to infinity establishes the limit.
\par The sets $\Gamma_d$ described in Example \ref{exa} are prototypical of the Main Theorem. Hence, there is the following corollary.
\begin{corollary}\label{cor:first}
For each $d\in(0,1),$ the set $\Gamma_d$ described in Example \ref{exa} satisfies $$\mathcal{N}_d(\Gamma_d)=A_d.$$
\end{corollary}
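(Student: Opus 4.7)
The plan is to verify that the set $\Gamma_d$ of Example \ref{exa} meets the hypotheses of Theorem \ref{thm:main} with $L=1$, and then simply invoke the Main Theorem. First I would set $s_k := \sum_{j=1}^{k} j^{-1/d}$ so that $\Gamma_d = \{s_k : k \geq 1\} \cup \{\zeta(1/d)\}$. Since $s_k$ is strictly increasing and $s_k \to \zeta(1/d)$, the smallest closed interval containing $\Gamma_d$ is $I = [1, \zeta(1/d)]$, and the complementary open intervals of $\Gamma_d$ inside $I$ are precisely $I_k = (s_k, s_{k+1})$ for $k \geq 1$, with lengths $l_k = (k+1)^{-1/d}$. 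The sequence $(l_k)$ is strictly decreasing, and
\[
\sum_{k=1}^{\infty} l_k = \sum_{k=2}^{\infty} k^{-1/d} = \zeta(1/d) - 1 = m_1(I),
\]
so $\Gamma_d$ is a cut-out set in the sense of Definition \ref{def:cut}.

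Next I would check Properties I and II. For Property I,
\[
l_k\, k^{1/d} = \left(\frac{k}{k+1}\right)^{1/d} \to 1 \quad \text{as } k\to\infty,
\]
so Property I holds at dimension $d \in (0,1)$ with $L = 1$. For Property II, by construction the intervals $I_k$ are already arranged from left to right in the order of their (non-increasing) lengths, so $\Gamma_d$ is monotone. Theorem \ref{thm:main} then yields
\[
\mathcal{N}_d(\Gamma_d) = \lim_{\varepsilon \to 0^+} N(\Gamma_d, \varepsilon)\,\varepsilon^d = L^d \sum_{k=1}^{\infty}\frac{k^d - (k-1)^d}{k} = A_d,
\]
as claimed. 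There is no substantive obstacle: the content of the corollary is essentially that $\Gamma_d$ was engineered so that the Main Theorem's hypotheses hold transparently and its constant of proportionality $L^d$ collapses to $1$. The only calculation worth doing explicitly in the write-up is the ratio $l_k k^{1/d} = (k/(k+1))^{1/d}$, whose limit is immediate.
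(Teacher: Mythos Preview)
Your proposal is correct and matches the paper's approach: the paper does not give a separate proof of this corollary, treating it as immediate from Example~\ref{exa} (which asserts Properties I and II with $L=1$) together with Theorem~\ref{thm:main}. You have simply filled in the verification that the gap sequence is $l_k=(k+1)^{-1/d}$ and that $l_k k^{1/d}\to 1$, which is exactly the check the paper leaves implicit.
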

Due to the formula for the Minkowski content given by Theorem \ref{thm:meas}, we obtain a universality proportionality with $\mathcal{M}_d(\Gamma).$
\begin{corollary}\label{cor:formula} Suppose $\Gamma$ is a cut-out set satisfying Properties I and II, of Minkowski dimension $d\in(0,1),$ and let
$p_d={A_d(1-d)}/{2^{1-d}}.$ Then
$$\mathcal{N}_d(\Gamma)=p_d \mathcal{M}_d(\Gamma).$$
\end{corollary}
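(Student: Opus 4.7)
The plan is to observe that this is a direct consequence of combining the Main Theorem with Falconer's formula for the Minkowski content of a Minkowski measurable cut-out set. Both results express a key quantity in terms of the common limit $L=\lim_{j\to\infty}l_j j^{1/d}$ provided by Property I, and eliminating $L$ yields the stated proportionality.

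First I would invoke Theorem \ref{thm:main}: since $\Gamma$ satisfies Properties I and II by hypothesis, the packing function has the first-order asymptotic
\[
\mathcal{N}_d(\Gamma)=\lim_{\varepsilon\to 0^+}N(\Gamma,\varepsilon)\varepsilon^d=L^d\sum_{k=1}^{\infty}\frac{k^d-(k-1)^d}{k}=L^d A_d,
\]
where I am using the shorthand $A_d$ introduced just before the corollary. Next, since Property I is exactly the hypothesis $0<\underline{L}=\overline{L}=L<\infty$ of case (\ref{char2}) of Theorem \ref{thm:meas}, that theorem applies and gives
\[
\mathcal{M}_d(\Gamma)=\frac{2^{1-d}}{1-d}L^d,
\]
so in particular $\Gamma$ is Minkowski measurable of dimension $d$ and $L^d=\dfrac{1-d}{2^{1-d}}\mathcal{M}_d(\Gamma)$.

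Substituting this expression for $L^d$ into the asymptotic for $\mathcal{N}_d(\Gamma)$ gives
\[
\mathcal{N}_d(\Gamma)=A_d\cdot\frac{1-d}{2^{1-d}}\mathcal{M}_d(\Gamma)=p_d\,\mathcal{M}_d(\Gamma),
\]
which is the desired identity. There is no real obstacle here: once Theorems \ref{thm:main} and \ref{thm:meas} are in hand, the proof is a one-line elimination of the intermediate parameter $L$. The only minor point worth flagging is that $A_d$ is a genuine positive finite number, which was already verified in the paragraph preceding Corollary \ref{cor:first} via the Limit Comparison Test, so $p_d$ is a well-defined positive constant depending only on $d$.
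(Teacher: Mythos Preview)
Your proof is correct and follows exactly the approach the paper intends: the corollary is stated immediately after the Main Theorem with the remark that it follows from the formula for the Minkowski content given by Theorem \ref{thm:meas}, and your argument simply writes out that elimination of $L^d$ explicitly.
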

\par Sharpness Theorem \ref{thm:sharp2} shows that there cannot be a fractional dimension analog for $C_d$ when $d\in(0,1)$ that is universal with respect to the Minkowski content of the underlying set. Thus, unlike the situation for smoother sets presented in Theorem \ref{thm:full}, when $d\in(0,1)$ there is no way to reverse engineer a definition of $\Delta_d$ from an asymptotic packing identity that is universal with respect to Minkowski content. Nonetheless the next proposition shows that the packing constants $p_d$ of Corollary \ref{cor:formula} approach the expected result $C_1=2^1(\Delta_1/\beta_1)=1$ as $d\to 1^{-}.$
\begin{proposition}\label{thm:limiting} Suppose $p_d=A_d(1-d)/2^{1-d},$ for each $d\in(0,1),$ as provided in Corollary \ref{cor:formula}. Then
$$\lim_{d\to 1^{-}}p_d=\lim_{d\to 1^{-}}\frac{(1-d)}{2^{1-d}}\sum_{k=1}^{\infty}\frac{k^d-(k-1)^d}{k}=1.$$
\end{proposition}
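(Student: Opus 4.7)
Since $2^{1-d}\to 1$ as $d\to 1^-$, it suffices to show that $\lim_{d\to 1^-}(1-d)A_d = 1$. The plan is to rewrite $A_d$ so that its singular behaviour at $d=1$ is captured by the Riemann zeta function evaluated at $s = 2-d$, and then extract the residue of $\zeta$ at $s=1$.

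The first step is Abel summation. With $a_k = k^d - (k-1)^d$, the telescoping partial sums $\sum_{k=1}^{n} a_k = n^d$ combined with summation by parts give
$$\sum_{k=1}^{N}\frac{k^d - (k-1)^d}{k} \;=\; N^{d-1} \;+\; \sum_{k=1}^{N-1}\frac{k^{d-1}}{k+1}.$$
For $d\in(0,1)$ the boundary term $N^{d-1}$ vanishes as $N\to\infty$, so $A_d = \sum_{k=1}^{\infty} k^{d-1}/(k+1)$. Comparing against $\zeta(2-d) = \sum_{k=1}^{\infty} k^{d-1}/k$, a direct subtraction yields
$$\zeta(2-d) - A_d \;=\; \sum_{k=1}^{\infty} k^{d-1}\left(\tfrac{1}{k} - \tfrac{1}{k+1}\right) \;=\; \sum_{k=1}^{\infty}\frac{k^{d-2}}{k+1}.$$
For $d\in(0,1)$ this residual series is dominated term by term by $1/k^2$, so it stays bounded uniformly in $d$, and by dominated convergence it tends to the telescoping value $\sum_{k=1}^{\infty} 1/(k(k+1)) = 1$ as $d\to 1^-$. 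In particular $(1-d)\bigl(\zeta(2-d) - A_d\bigr) \to 0$.

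The final step is to invoke the simple pole of $\zeta$ at $s=1$: substituting $s=2-d$ turns the classical identity $\lim_{s\to 1^+}(s-1)\zeta(s)=1$ into $\lim_{d\to 1^-}(1-d)\zeta(2-d)=1$. Combining this with the previous step gives
$$\lim_{d\to 1^-}(1-d)A_d \;=\; \lim_{d\to 1^-}(1-d)\zeta(2-d) \;-\; \lim_{d\to 1^-}(1-d)\bigl(\zeta(2-d) - A_d\bigr) \;=\; 1 - 0 = 1,$$
and then the harmless factor $2^{1-d}\to 1$ concludes the argument. The main obstacle is essentially the Abel summation bookkeeping and the uniform domination, both of which are routine; no deep analysis is needed, since the singular behaviour is tracked entirely by $\zeta$. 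An alternative route that matches the Digamma framing hinted at in the abstract would be to express $A_d$ via the Hurwitz zeta function and use the identity $\psi(z) = \lim_{s\to 1}\bigl(-\zeta(s,z) + 1/(s-1)\bigr)$ to extract the same residue; this variant would also immediately supply the next-order correction in powers of $(1-d)$ if that were ever desired.
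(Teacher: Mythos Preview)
Your proof is correct. Both your argument and the paper's begin with the same Abel summation step to rewrite $A_d = \sum_{k\ge 1} k^{d-1}/(k+1)$, but they diverge in how they extract the residue at $d=1$. The paper bounds this Dirichlet series above and below by the integral $\int_1^\infty \frac{dx}{(x+1)x^s}$ (with $s=1-d$), then expresses that integral as $\tfrac{1}{2}\bigl(\psi(\tfrac{s+1}{2})-\psi(\tfrac{s}{2})\bigr)$ via Gauss's integral representation of the Digamma function, and finally reads off the residue from the simple pole of $\psi$ at $0$. You instead compare $A_d$ termwise to $\zeta(2-d)$, show the difference is a uniformly dominated series, and pull the residue from the standard pole of $\zeta$ at $1$. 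Your route is more self-contained and avoids the Digamma machinery entirely; the paper's route, on the other hand, is what one would want if a more refined expansion (e.g.\ the constant term as $d\to 1^-$) were needed, since the Digamma identity gives an exact closed form for the comparison integral rather than just a bounded error.
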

We shall prove the above by using an integral representation of the Digamma function. In fact, the proof amounts to calculating the residue of a particular Dirichlet series at a simple pole.
\par Theorem \ref{thm:meas} shows that Minkowski content is \textit{not} affected by rearrangement of the intervals $I_j$ (this is one of the main ideas in \cite{Lapidus1993}) while Corollary \ref{cor:formula} implies packing asymptotics generally may be. These observations raise natural questions about how the geometric arrangement of the intervals $I_j$ affects the size of these packing constants and their existence. 
\par Indeed, if $\Gamma$ is a cut-out set of Minkowski dimension $d\in(0,1)$, then because the two-sided bounds provided in Corollary \ref{cor:growth} are universal with respect to the upper and lower Minkowski contents, and rearrangement does not affect these quantities (see Remark \ref{re}), the bounds are uniform across all rearrangements of $\Gamma.$ Thus, for such $\Gamma,$ Corollary \ref{cor:growth} gives us the following result.
\begin{proposition} If $\Gamma$ is a cut-out set of Minkowski dimension $d\in(0,1),$ then 
$$0<\frac{\mathcal{\underline{M}}_d(\Gamma)}{4}\leq\inf_{\Gamma'\in\mathcal{R}(\Gamma)}\mathcal{\underline{N}}_d(\Gamma')\leq \sup_{\Gamma'\in\mathcal{R}(\Gamma)}\mathcal{\overline{N}}_d(\Gamma')\leq 2^{d-1}\mathcal{\overline{M}}_d(\Gamma)<\infty.$$    
\end{proposition}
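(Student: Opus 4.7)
The plan is to deduce the proposition directly from Corollary \ref{cor:growth} together with the rearrangement invariance of Minkowski contents recorded in Remark \ref{re}. The key observation is that every $\Gamma'\in\mathcal{R}(\Gamma)$ is itself an infinite compact subset of $\mathbb{R}$ whose upper and lower Minkowski contents agree with those of $\Gamma$, so Corollary \ref{cor:growth} applies uniformly to the entire family of rearrangements.

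Concretely, first I would fix an arbitrary $\Gamma'\in\mathcal{R}(\Gamma)$. By Definition \ref{def:cut}, $\Gamma'$ is built from the same sequence of gap lengths $(l_j)_{j=1}^{\infty}$ as $\Gamma$. The formula
$$m_1(\Gamma'(\varepsilon))=\sum_{j=n+1}^{\infty}l_j+2(n+1)\varepsilon$$
from Remark \ref{re} depends only on $(l_j)$, so
$$\underline{\mathcal{M}}_d(\Gamma')=\underline{\mathcal{M}}_d(\Gamma)\quad\text{and}\quad \overline{\mathcal{M}}_d(\Gamma')=\overline{\mathcal{M}}_d(\Gamma).$$
Since $\Gamma$ has Minkowski dimension $d\in(0,1)$ by hypothesis, the same is therefore true of every $\Gamma'\in\mathcal{R}(\Gamma)$, with identical values of the upper and lower $d$-dimensional Minkowski contents. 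In particular, both quantities are finite and strictly positive, yielding the outer strict inequalities in the proposition.

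Next I would apply Corollary \ref{cor:growth} to $\Gamma'$ to obtain
$$\frac{\underline{\mathcal{M}}_d(\Gamma)}{4}=\frac{\underline{\mathcal{M}}_d(\Gamma')}{4}\leq \underline{\mathcal{N}}_d(\Gamma')\leq\overline{\mathcal{N}}_d(\Gamma')\leq 2^{d-1}\overline{\mathcal{M}}_d(\Gamma')=2^{d-1}\overline{\mathcal{M}}_d(\Gamma).$$
Since this chain holds for every $\Gamma'\in\mathcal{R}(\Gamma)$ with the outer bounds independent of the chosen rearrangement, taking infimum over $\Gamma'$ on the left and supremum over $\Gamma'$ on the right preserves the inequalities and gives exactly the stated conclusion.

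There is essentially no technical obstacle here; the result is a bookkeeping consequence of two earlier facts. The only subtlety to be careful about is invoking Remark \ref{re} to ensure that the Minkowski contents truly are rearrangement invariant, and hence that the outer bounds in Corollary \ref{cor:growth} do not depend on $\Gamma'$, so that the infimum and supremum can be sandwiched between the same two numbers.
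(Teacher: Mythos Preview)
Your proposal is correct and follows essentially the same approach as the paper: the paper derives the proposition directly from Corollary \ref{cor:growth} together with the rearrangement invariance of Minkowski contents noted in Remark \ref{re}, exactly as you do. There is nothing to add.
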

Theorem \ref{thm:main}, Sharpness Theorem \ref{thm:sharp2}, and the above, now lead to the following two corollaries.
\begin{corollary}\label{cor:sharp3} Suppose $\Gamma$ is a cut-out set of Minkowski dimension $d\in(0,1),$ satisfying Property I. Then 
$$p_d\mathcal{M}_d(\Gamma)\leq \sup_{\Gamma'\in\mathcal{R}(\Gamma)}\mathcal{\overline{N}}_d(\Gamma')\leq 2^{d-1}\mathcal{M}_d(\Gamma).$$ 
\end{corollary}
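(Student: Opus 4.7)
The plan is to recognize the corollary as a direct consequence of three results already stated in the excerpt: the Main Theorem (Theorem \ref{thm:main}), Corollary \ref{cor:formula}, and the proposition appearing immediately above the statement. I would decompose the claim into its two inequalities and handle each separately, with no new machinery needed.

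For the lower bound $p_d\mathcal{M}_d(\Gamma)\leq \sup_{\Gamma'\in\mathcal{R}(\Gamma)}\overline{\mathcal{N}}_d(\Gamma')$, the idea is to exhibit a specific rearrangement that realizes the quantity $p_d \mathcal{M}_d(\Gamma)$. Let $\Gamma^{\star}$ denote the monotone rearrangement of $\Gamma$. Since Property I is a condition solely on the non-increasing sequence of gap lengths $(l_j)_{j=1}^{\infty}$, which is preserved under any rearrangement, $\Gamma^{\star}$ satisfies Property I with the same constant $L$; and by construction it also satisfies Property II. Thus the Main Theorem applies, yielding $\mathcal{N}_d(\Gamma^{\star}) = L^d A_d$, and Corollary \ref{cor:formula} rewrites this as $\mathcal{N}_d(\Gamma^{\star}) = p_d \mathcal{M}_d(\Gamma^{\star})$. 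Now by Remark \ref{re}, Minkowski content is rearrangement invariant, so $\mathcal{M}_d(\Gamma^{\star}) = \mathcal{M}_d(\Gamma)$. Consequently $\overline{\mathcal{N}}_d(\Gamma^{\star}) = p_d \mathcal{M}_d(\Gamma)$, and passing to the supremum over $\mathcal{R}(\Gamma)$ gives the lower bound.

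For the upper bound, I would invoke the proposition immediately preceding the corollary, which asserts
$$\sup_{\Gamma'\in\mathcal{R}(\Gamma)}\overline{\mathcal{N}}_d(\Gamma')\leq 2^{d-1}\overline{\mathcal{M}}_d(\Gamma).$$
Since $\Gamma$ satisfies Property I, Theorem \ref{thm:meas} tells us that $\Gamma$ is Minkowski measurable of dimension $d$, so $\overline{\mathcal{M}}_d(\Gamma) = \mathcal{M}_d(\Gamma)$, and the desired upper bound follows.

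Since this is essentially a bookkeeping argument knitting together prior results, I do not anticipate any genuine obstacle. The only conceptual point worth checking is that Property I truly is an invariant of the rearrangement class $\mathcal{R}(\Gamma)$, which is immediate from the definition (the sequence $(l_j)$ is defined from the gap lengths in non-increasing order, independent of geometric placement), so that the monotone rearrangement is a legitimate witness for the lower bound.
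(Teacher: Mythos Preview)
Your proposal is correct and follows essentially the same route the paper indicates: the text introduces this corollary by saying ``Theorem \ref{thm:main}, Sharpness Theorem \ref{thm:sharp2}, and the above, now lead to the following two corollaries,'' and for Corollary \ref{cor:sharp3} specifically only Theorem \ref{thm:main} (via Corollary \ref{cor:formula}) and the preceding proposition are needed, exactly as you outline. Your observation that Property I and Minkowski content are rearrangement invariants (Remark \ref{re}) is the only point requiring comment, and you have addressed it.
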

\begin{corollary} There exists $t\in(0,1),$ and a cut-out set $T$ satisfying Property I but not Property II, such that 
    $$0<\inf_{\Gamma\in\mathcal{R}(T)}\mathcal{\underline{N}}_t(\Gamma)<\sup_{\Gamma\in\mathcal{R}(T)}\mathcal{\overline{N}}_t(\Gamma)<\infty.$$
\end{corollary}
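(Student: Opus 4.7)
The plan is to take $T$ to be the $(1/2,1/3)$ Cantor set of Example \ref{ex:sharp2}, with Minkowski dimension $t\in(0,1)$ solving $(1/2)^t+(1/3)^t=1$, and to witness the strict separation between the infimum and supremum by pitting $T$ itself against its monotone rearrangement. From Example \ref{ex:sharp2}, $T$ already satisfies Property I (it is Minkowski measurable at dimension $t$ by the independence of $\log 2$ and $\log 3$ together with the equivalence between independence and Minkowski measurability recorded in Section \ref{sec:known}) and manifestly fails Property II (its complementary intervals are arranged in the strongly non-monotone self-similar order). Let $L=\lim_{j\to\infty}l_j j^{1/t}\in(0,\infty)$ be the limit supplied by Theorem \ref{thm:meas}, and write $A_t:=\sum_{k=1}^{\infty}(k^t-(k-1)^t)/k$.

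Next I would invoke Sharpness Theorem \ref{thm:sharp2} to conclude that the packing limit of $T$ itself exists and is strictly less than the Main Theorem's value, that is $\mathcal{\underline{N}}_t(T)=\mathcal{\overline{N}}_t(T)<L^t A_t$. In parallel I would pick any monotone rearrangement $T_{*}\in\mathcal{R}(T)$, whose existence is guaranteed by Definition \ref{def:cut}. Since the defining sequence of gap lengths $(l_j)$ is rearrangement invariant, $T_{*}$ inherits Property I from $T$, and by construction $T_{*}$ also satisfies Property II. The Main Theorem therefore yields $\mathcal{N}_t(T_{*})=L^t A_t$. Chaining these statements,
$$\inf_{\Gamma\in\mathcal{R}(T)}\mathcal{\underline{N}}_t(\Gamma)\leq \mathcal{\underline{N}}_t(T)=\mathcal{\overline{N}}_t(T)<L^t A_t=\mathcal{\overline{N}}_t(T_{*})\leq \sup_{\Gamma\in\mathcal{R}(T)}\mathcal{\overline{N}}_t(\Gamma),$$
which delivers the desired strict middle inequality.

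Finally, to secure the outer bounds $0<\inf$ and $\sup<\infty$, I would invoke the preceding Proposition, whose two-sided Minkowski-content estimates are already uniform over $\mathcal{R}(T)$ because the upper and lower Minkowski contents of cut-out sets are rearrangement invariant by Remark \ref{re}; since $T$ has Minkowski dimension $t$, both $\mathcal{\underline{M}}_t(T)$ and $\mathcal{\overline{M}}_t(T)$ lie in $(0,\infty)$ and the Proposition supplies the required enclosure. The only conceptual check in the entire argument is the rearrangement invariance of Property I (equivalently, of Minkowski measurability), which is immediate from Theorem \ref{thm:meas}; thus I do not expect any substantive obstacle, as the corollary is essentially a repackaging of Theorems \ref{thm:main} and \ref{thm:sharp2} together with the Proposition above.
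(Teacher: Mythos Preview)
Your proposal is correct and follows essentially the same approach as the paper, which simply records that the corollary follows from Theorem~\ref{thm:main}, Sharpness Theorem~\ref{thm:sharp2}, and the preceding Proposition on uniform rearrangement bounds. Your explicit identification of the monotone rearrangement $T_*$ as the witness for the supremum side, and of $T$ itself for the infimum side, is exactly the intended argument.
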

The results above lead us to formulate an abstraction of the best packing problem over non-increasing, positive, sequences $(l_j)_{j=1}^{\infty}$ satisfying the dimensionality requirement in Part (1) of Theorem \ref{thm:meas}; i.e. such that
$$0<\underline{L}=\liminf_{j\to\infty}l_jj^{1/d}\leq \limsup_{j\to\infty}{l_jj^{1/d}}=\overline{L}<\infty, \quad \text{ for some }d\in(0,1).$$ Due to integral estimation, the series $\sum_{j=1}^{\infty}l_j$ is absolutely convergent. Thus, we can pose the following question.
\begin{question}[Mass distribution problem]\label{def:mass} Given $\varepsilon>0,$ what is the largest number of \textit{subsets} $\gamma_1,\ldots,\gamma_N$ in a partition of $\mathbb{N}$ such that 
$$\sum_{j\in \gamma_i}l_j\geq \varepsilon, \quad \text{ for all } i=1,\ldots, N?$$
\par 
That this maximum exists for all $\varepsilon\leq \sum_{j=1}^{\infty}l_j$ is immediate as the sum is finite. Because the quantity depends only on the sequence of gap lengths, we shall use $\mathcal{L}=(l_j)_{j=1}^{\infty}$ as a short-hand for this sequence and denote the above maximum by $N(\mathcal{L},\varepsilon).$ Moreover, we shall say the \textit{sequence} $(l_j)_{j=1}^{\infty}$ has Minkowski dimension $d\in(0,1)$ if it satisfies Part (1) of Theorem \ref{thm:meas} (as was done in \cite{Lapidus1993}). Naturally, we will also say that the \textit{sequence} $(l_j)_{j=1}^{\infty}$ is Minkowski measurable of dimension $d\in(0,1)$ if it satisfies Part (2) of Theorem \ref{thm:meas}. In the sequel, we shall refer to the quantity $N(\mathcal{L},\varepsilon)$ as the \textit{mass distribution function} for $\mathcal{L},$ and discuss the task of finding $N(\mathcal{L},\varepsilon)$ as solving the \textit{mass distribution problem}. 
\end{question}
\par The mass distribution problem is a generalized inverse for a corresponding infinite \textit{max-min multiple subset sum problem} (Max-Min MSSP) that asks for the value of 
$$\delta(\mathcal{L},N):=\sup_{\gamma_1,\ldots,\gamma_N\subset \mathbb{N}}\min_{i=1,\ldots,N}\sum_{j\in\gamma_i}l_j,$$
where $N\geq 1,$ and $\gamma_1,\ldots,\gamma_N$ form a partition of $\mathbb{N}$ with trivial capacity constraints $\sum_{j\in\gamma_i}l_j\leq\sum_{j=1}^{\infty}l_j,$ for each $i=1,\ldots,N.$ Caprara, Kellerer, and Pferschy studied the general finite version of the  Max-Min MSSP from the perspective of computer science in \cite{Caprara_Kellerer_Pferschy_2000}, classifying this problem as strongly NP-hard. We establish the following crude first-order asymptotic growth bounds for $N(\mathcal{L},\varepsilon).$ 
\begin{proposition}\label{thm:inv} Suppose $\mathcal{L}=(l_j)_{j=1}^{\infty}$ has Minkowski dimension $d\in(0,1)$, with $0<\underline{L}=\liminf_{j\to\infty }l_jj^{1/d}\leq \limsup_{j\to\infty}{l_jj^{1/d}}=\overline{L}<\infty.$ Then
\begin{equation}\label{eq:p1}
  \underline{L}^d A_d\leq \liminf_{\varepsilon\to 0^+}N(\mathcal{L},\varepsilon)\varepsilon^d\leq \limsup_{\varepsilon\to 0^+}N(\mathcal{L},\varepsilon)\varepsilon^d\leq \overline{L}^d+\frac{d}{1-d}\overline{L}\underline{L}^{d-1}.  
\end{equation}
In particular, if $\mathcal{L}$ is Minkowski measurable, then
\begin{equation}\label{eq:p2}
    A_dL^d\leq \liminf_{\varepsilon\to 0^+}N(\mathcal{L},\varepsilon)\varepsilon^d\leq \limsup_{\varepsilon\to 0^+}N(\mathcal{L},\varepsilon)\varepsilon^d\leq \frac{L^d}{1-d}
\end{equation}
where $L=\underline{L}=\overline{L}.$
\end{proposition}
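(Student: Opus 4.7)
The plan is to prove the two-sided bounds in \eqref{eq:p1}; the measurable case \eqref{eq:p2} then follows upon setting $\underline{L}=\overline{L}=L$ and using the algebraic identity $L^d+\frac{d}{1-d}L\cdot L^{d-1}=L^d/(1-d)$.

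For the upper bound, I would define the cutoff $n(\varepsilon):=\max\{j\geq 1:l_j\geq\varepsilon\}$, which is finite since $l_j\to 0$. The key observation is that in any valid partition $\gamma_1,\ldots,\gamma_N$, at most $n(\varepsilon)$ groups can contain an index from $\{1,\ldots,n(\varepsilon)\}$, while each remaining ``tail group'' (containing only indices $j>n(\varepsilon)$) draws mass at least $\varepsilon$ from the finite tail $T(\varepsilon):=\sum_{j>n(\varepsilon)}l_j$. Hence $N(\mathcal{L},\varepsilon)\leq n(\varepsilon)+T(\varepsilon)/\varepsilon$. For any $\overline{L}'>\overline{L}$ and $\underline{L}'<\underline{L}$, the decay $\underline{L}'j^{-1/d}\leq l_j\leq \overline{L}'j^{-1/d}$ holds for all $j$ beyond some threshold, so $\underline{L}'^d\varepsilon^{-d}\leq n(\varepsilon)\leq \overline{L}'^d\varepsilon^{-d}$ for $\varepsilon$ small. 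An integral comparison, using $1-1/d<0$, produces
\[
T(\varepsilon)\leq \overline{L}'\int_{n(\varepsilon)}^{\infty} x^{-1/d}\,dx=\frac{d\,\overline{L}'}{1-d}\,n(\varepsilon)^{1-1/d}\leq \frac{d}{1-d}\overline{L}'\,\underline{L}'^{d-1}\varepsilon^{1-d}.
\]
Combining, $N(\mathcal{L},\varepsilon)\varepsilon^d\leq \overline{L}'^d+\frac{d}{1-d}\overline{L}'\underline{L}'^{d-1}+o(1)$, and letting $\overline{L}'\searrow\overline{L}$, $\underline{L}'\nearrow\underline{L}$ yields the stated upper bound.

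The lower bound is the substantive step. I would pool indices by the level at which the decay of $(l_j)$ first permits $k$ consecutive entries to aggregate to a sum at least $\varepsilon$. Fix $\underline{L}'<\underline{L}$; for $\varepsilon$ small, $l_j\geq \underline{L}'j^{-1/d}$ holds on every index used below. Set $m_k:=\lfloor(k\underline{L}'/\varepsilon)^d\rfloor$ (with $m_0:=0$) and choose a slowly growing cutoff $K=K(\varepsilon):=\lfloor\varepsilon^{-d/2}\rfloor$. For each $1\leq k\leq K$, partition the block $\{m_{k-1}+1,\ldots,m_k\}$ into $\lfloor(m_k-m_{k-1})/k\rfloor$ subsets of size exactly $k$, plus a remainder of size at most $k-1$; each full subset has sum $\geq k\cdot(\varepsilon/k)=\varepsilon$ since every $l_j$ inside satisfies $l_j\geq \varepsilon/k$ by the defining inequality of $m_k$. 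Absorb all remainders together with every index $j>m_K$ into a single already-constructed subset (its sum only grows); the resulting partition of $\mathbb{N}$ satisfies
\[
N(\mathcal{L},\varepsilon)\geq \sum_{k=1}^{K}\left\lfloor\frac{m_k-m_{k-1}}{k}\right\rfloor\geq \sum_{k=1}^{K}\frac{m_k-m_{k-1}}{k}-K.
\]
Writing $m_k=k^d(\underline{L}'/\varepsilon)^d+\theta_k$ with $|\theta_k|\leq 1$ and telescoping gives $\sum_{k=1}^{K}(m_k-m_{k-1})/k=(\underline{L}'/\varepsilon)^d\sum_{k=1}^{K}(k^d-(k-1)^d)/k+R_K$ with $|R_K|\leq 2\sum_{k=1}^{K}1/k$. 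With $K=\lfloor\varepsilon^{-d/2}\rfloor$, both $\varepsilon^{d}K$ and $\varepsilon^{d}\log K$ tend to $0$, while $\sum_{k=1}^{K}(k^d-(k-1)^d)/k\to A_d$; multiplying through by $\varepsilon^d$ and sending $\underline{L}'\nearrow\underline{L}$ yields $\liminf_{\varepsilon\to 0^+}N(\mathcal{L},\varepsilon)\varepsilon^d\geq \underline{L}^d A_d$.

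The hard part will be the bookkeeping in the lower-bound construction: every natural number must land in exactly one subset, each block must genuinely supply enough mass (small indices require a separate pointwise check, since the asymptotic bound $l_j\geq \underline{L}'j^{-1/d}$ only holds past some threshold $J_0$, but monotonicity of $(l_j)$ together with smallness of $\varepsilon$ handles this), and the cumulative flooring loss $K$ plus the telescoping error $R_K$ must remain of strictly lower order than the leading $\varepsilon^{-d}$ term. The upper bound, by contrast, is essentially a one-line pigeonhole reduction combined with the integral asymptotics of $T(\varepsilon)$.
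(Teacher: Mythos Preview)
Your proposal is correct and follows essentially the same approach as the paper. The upper bound is identical: your cutoff $n(\varepsilon)$ is the paper's $F(1,\varepsilon)$, and the paper reaches the same inequality $N(\mathcal{L},\varepsilon)\leq F(1,\varepsilon)+\varepsilon^{-1}\sum_{j>F(1,\varepsilon)}l_j$ (by a WLOG rearrangement to singletons rather than your pigeonhole phrasing) followed by the same integral tail estimate. For the lower bound, the paper builds the same greedy block partition with roughly $\lfloor\frac{k^d-(k-1)^d}{k}(\underline L'/\varepsilon)^d\rfloor$ groups of $k$ consecutive indices at level $k$; the only cosmetic difference is that the paper parameterizes by the block counts $h_k$ and invokes its earlier Lemma~\ref{thm:Lemma} on $F(k,\varepsilon)$, whereas you parameterize by the endpoints $m_k=\lfloor(k\underline L'/\varepsilon)^d\rfloor$ and verify the mass bound $l_j\geq\varepsilon/k$ directly, which is slightly more self-contained.
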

\begin{remark} Note that there are some remarkable similarities between the mass distribution function $N(\mathcal{L},\varepsilon)$ and the function $\delta(1/\varepsilon)=\sum_{j=1}^{\infty}(l_j/\varepsilon-\lfloor l_j/\varepsilon \rfloor),$ (first studied in \cite{Lapidus1993}) in the limit as $\varepsilon\to 0^+.$ Indeed, Theorem 3.2 of \cite{Lapidus1993} shows that these functions have the same growth order as $\varepsilon\to 0^+$, and the asymptotic upper bound for $\limsup_{\varepsilon\to 0^+}\delta(1/\varepsilon)\varepsilon^d$ agrees with the asymptotic upper bound we provide on the mass distribution function. Lapidus and Pomerance in fact show (Theorem 4.2 of \cite{Lapidus1993}) that $\delta(1/\varepsilon)\varepsilon^d\sim -\zeta(d)L^d$ as $\varepsilon\to 0^+$ if $\mathcal{L}$ is Minkowski measurable with $l_jj^{1/d}\sim L,$ for some $0<L<\infty$. Because the Minkowski dimensions $d\in(0,1)$ belong to the \textit{critical strip}, $0<\text{Re}(s)<1,$ of $\zeta(s)$, the study of the geometry of cut-out sets has some intriguing connections to analytic number theory and the Riemann Hypothesis \cite{Lapidus_Maier_1995b}. In detail, Corollary 2.7 of \cite{Lapidus_Maier_1995b} shows how an inverse spectral theorem for cut-out sets of Minkowski dimension $d\in(0,1)$ is false only when $d=1/2$ \textit{if and only if} the
Riemann Hypothesis is true. 
\par Falconer also used the asymptotic result for $\delta(1/\varepsilon)$ to study the following related packing problem (see Section 4 of \cite{Falconer_1995}): Given an open $\Omega\subset\mathbb{R},$ expressed as a countable disjoint union of open intervals $\Omega=\cup_{j=1}^{\infty}I_j,$ where $|I_j|=l_j\sim Lj^{-1/d},$ and $0<L<\infty,$ let $P(\mathcal{L},\varepsilon)$ be the maximum number of disjoint open intervals of radius $\varepsilon$ that one may place in $\Omega.$ Then 
$$\varepsilon^d\left(P(\mathcal{L},\varepsilon)-\frac{1}{\varepsilon}\sum_{j=1}^{\infty}l_j\right)\sim\zeta(d)L^d\quad\text{ as }\quad\varepsilon\to 0^+.$$
\end{remark}
\end{subsection}
\begin{subsection}{Outline}
\par In Section \ref{sec:proofs}, we shall proceed with the proofs. We begin Section \ref{sec:lemmas} by establishing some propositions and lemmas that we shall need to use repeatedly throughout the manuscript.
In Section \ref{sec:main} we provide some linear programming background before proving the asymptotic upper and lower bounds that suffice to complete the Main Theorem. After this, we prove the limiting behavior of the constants $p_d$ in Proposition \ref{thm:limiting} and the growth bounds of Proposition \ref{thm:inv}. Sections \ref{sec:sharp1} and \ref{sec:sharp2} contain the proofs of Theorems \ref{thm:sharp1} and \ref{thm:sharp2}, respectively. In particular, we spend some time in Section \ref{sec:sharp2} describing the results of renewal theory that are relevant to our proof of Theorem \ref{thm:sharp2}.
\par We complete the manuscript by stating two conjectures concerning the relationship between packing asymptotics and Minkowski measurability in Section \ref{sec:CQfD}.
\end{subsection}
\section{Proofs}\label{sec:proofs}
\begin{subsection}{Lemmas}\label{sec:lemmas} We begin the proofs by verifying Proposition \ref{thm:growth}. 
\begin{proof}[Proof of Proposition \ref{thm:growth}] We proceed to establish the sufficient set of inequalities in Line \ref{eq:growth}. To this end, let $\omega_{N(A,\varepsilon)}\subset A$ satisfy $\delta(\omega_{N(A,\varepsilon)})\geq \varepsilon.$ Due to the maximality of $N(A,\varepsilon),$ for any $y\in A\setminus\omega_{N(A,\varepsilon)},$ we have $\text{dist}(y,\omega_{N(A,\varepsilon)})<\varepsilon.$ Thus 
$A\subset \bigcup_{x\in \omega_{N(A,\varepsilon)}}B_{\varepsilon}(x),$ and by the triangle inequality,
 $$A(\varepsilon)\subset \bigcup_{x\in \omega_{N(A,\varepsilon)}}B_{2\varepsilon}(x).$$ 
Therefore, by the monotonicity of measure, $m_p(A(\varepsilon))\leq (2\varepsilon)^p\beta_p N(A,\varepsilon).$ Thus,
$$\frac{\beta_{p-d}}{2^p\beta_p}\cdot \frac{m_p(A(\varepsilon))}{\beta_{p-d}\varepsilon^{p-d}}\leq N(A,\varepsilon)\varepsilon^d,$$
and the lower bound holds for $C_1(p,d):=\beta_{p-d}/(2^p\beta_p).$ 
\par Next, we establish the upper bound. If $\omega_{N(A,\varepsilon)}$ is a packing of radius at least $\varepsilon$, we know that the $\varepsilon/2$-balls around each point of the configuration have disjoint interiors, so $m_p\left(B_{\varepsilon/2}(x)\cap B_{\varepsilon/2}(y)\right)=0,$ for each pair of distinct points $x,y\in \omega_{N(A,\varepsilon)}.$ Since
$$\bigcup_{x\in\omega_{N(A,\varepsilon)}}B_{\varepsilon/2}(x)\subset A(\varepsilon/2),$$  it follows from the disjointness that $m_p(A(\varepsilon/2))\geq ({\varepsilon}/{2})^p \beta_p N(A,\varepsilon)$. Therefore,
$$N(A,\varepsilon)\varepsilon^d\leq \frac{2^d\beta_{p-d}}{\beta_p}\cdot \frac{m_p(A(\varepsilon/2))}{\beta_{p-d}(\varepsilon/2)^{p-d}},$$
and the upper bound holds with $C_2(p,d):=2^d\beta_{p-d}/\beta_p$
\end{proof}
We will frequently use the following decomposition, the proof of which is immediate from the definitions.
\begin{proposition}\label{thm:simpchar} Suppose $\Gamma=I\setminus\cup_{j=1}^{\infty}I_j$ is a cut-out set satisfying Property II with the sequence of gap lengths $(l_j)_{j=1}^{\infty}.$ Then $\Gamma$ may be written in the form
    $$\Gamma=\left\{ y_k:k\geq 1\right\}\cup\ \left\{y_1+\sum_{j=1}^{\infty}l_j  \right\},$$ for a sequence $y_k$ defined by
    $y_1=
    \min I=\inf I_1$, and $y_{k+1}-y_k=l_k,$ for $k\geq 1.$ In particular, $\Gamma$ is countably infinite.
\end{proposition}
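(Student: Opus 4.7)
The plan is to exploit the monotonicity hypothesis (Property II) to show inductively that the open complementary intervals $I_j$ tile $I$ left-to-right with no ``gaps between gaps,'' and then read off the stated decomposition as a consequence. The key observation driving everything is that because $\sum_{j=1}^{\infty}l_j=m_1(I)$, we have $m_1(\Gamma)=0$, so $\Gamma$ cannot contain any subinterval of positive length.

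First I would set $y_1:=\min I$ and argue that $\inf I_1=y_1$. Indeed, in a monotone arrangement $I_1$ is the leftmost complementary interval, so if $\inf I_1>y_1$, then $[y_1,\inf I_1)$ meets no $I_j$ at all and therefore lies entirely in $\Gamma$; this would give $m_1(\Gamma)\geq \inf I_1-y_1>0$, a contradiction. Hence $I_1=(y_1,y_1+l_1)=(y_1,y_2)$ with $y_2:=y_1+l_1$. Proceeding by induction, assume $I_k=(y_k,y_{k+1})$ with $y_{k+1}=y_k+l_k$. Since $I_{k+1}$ is the next interval to the right of $I_k$ and must satisfy $\inf I_{k+1}\geq\sup I_k=y_{k+1}$, the same measure-zero argument applied to $[y_{k+1},\inf I_{k+1})$ forces $\inf I_{k+1}=y_{k+1}$, and therefore $I_{k+1}=(y_{k+1},y_{k+1}+l_{k+1})=(y_{k+1},y_{k+2})$.

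With this structure in hand, I would conclude
\[
\bigcup_{j=1}^{\infty}I_j=\bigcup_{k=1}^{\infty}(y_k,y_{k+1}),
\]
so removing this open set from $I$ leaves precisely the endpoints $\{y_k:k\geq 1\}$ together with the supremum $y_{\infty}:=\lim_{k\to\infty}y_k=y_1+\sum_{j=1}^{\infty}l_j$. Note that $y_{\infty}=\min I+m_1(I)=\max I\in I$, and that $y_{\infty}$ lies in no $I_j$ since each $I_j$ is open and bounded above by $y_{\infty}$; hence $y_{\infty}\in\Gamma$. This yields the claimed representation. Since the $y_k$ are strictly increasing (as $l_k>0$) and bounded, they form a countable set converging to $y_{\infty}$, so $\Gamma$ is countably infinite.

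The only subtle point is the verification that no positive-length interval can hide between consecutive complementary intervals, and this is handled uniformly by the $m_1(\Gamma)=0$ argument above; there is no real obstacle, since monotonicity forces the gaps $I_j$ to be encountered in left-to-right order and the measure-saturation condition $\sum l_j=m_1(I)$ prevents any residual intervals in $\Gamma$. The proof is essentially a careful unpacking of the definitions.
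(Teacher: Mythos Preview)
Your proof is correct. The paper does not actually give a proof of this proposition, stating only that it ``is immediate from the definitions''; your argument supplies the details that the paper omits, using $m_1(\Gamma)=0$ to rule out any positive-length interval between consecutive $I_j$'s, which is precisely the mechanism that forces $\inf I_{k+1}=\sup I_k$ in the monotone arrangement.
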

We also need the following version of Abel's partial summation formula, which we state and prove for completion below.
\begin{lemma}[Partial summation]\label{combin} Suppose $(a_n)_{n=1}^{\infty}, (b_n)_{n=0}^{\infty}$ are two infinite sequences of real numbers such that either $a_1=0$ or $b_0=0$. For each $N\geq 2,$ define two finite sequences 
$$s_n=a_n\left(b_n-b_{n-1}\right),\quad \text{ for } 1\leq n\leq N,\quad \text{ and }$$ 
$$t_n=b_n\left({a_n}-{a_{n+1}}\right), \quad \text{ for } 1\leq n\leq N-1, \quad t_N={a_N}{b_N}.$$
Then for all $N\geq2,$
\begin{align*}
  \sum_{n=1}^{N}s_n=\sum_{n=1}^{N}t_n  .
\end{align*}
\end{lemma}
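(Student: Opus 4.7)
The plan is to verify the identity by straightforward manipulation: expand the sum $\sum_{n=1}^N s_n$, split it into two pieces, shift the index of summation in the piece involving $b_{n-1}$, and then use the hypothesis that $a_1 b_0 = 0$ to discard the stray boundary term that the index shift produces. The remaining expression should telescope into $\sum_{n=1}^N t_n$ after grouping terms containing a common $b_n$.

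Concretely, I would first write
\[
\sum_{n=1}^N s_n \;=\; \sum_{n=1}^N a_n b_n \;-\; \sum_{n=1}^N a_n b_{n-1}.
\]
In the second sum I would substitute $m = n-1$ to obtain $\sum_{m=0}^{N-1} a_{m+1} b_m$, and then split off the $m=0$ term $a_1 b_0$, which vanishes by the standing hypothesis that $a_1 = 0$ or $b_0 = 0$. This reduces the second sum to $\sum_{m=1}^{N-1} a_{m+1} b_m$.

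Next I would separate the term $a_N b_N$ from the first sum and combine what remains with the reindexed second sum:
\[
\sum_{n=1}^{N-1} a_n b_n \;-\; \sum_{n=1}^{N-1} a_{n+1} b_n \;=\; \sum_{n=1}^{N-1} b_n(a_n - a_{n+1}) \;=\; \sum_{n=1}^{N-1} t_n.
\]
Adding back the isolated $a_N b_N = t_N$ then yields $\sum_{n=1}^N t_n$, completing the identity. There is no genuine obstacle here; the only point that demands any care is making sure the index shift and the use of the boundary hypothesis $a_1 b_0 = 0$ are handled cleanly so that no stray terms remain. Everything else is a direct algebraic telescoping argument, which is why the authors rightly call the lemma ``Partial summation'' and include its proof only for completeness.
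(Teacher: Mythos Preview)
Your proof is correct and follows essentially the same telescoping route as the paper: both arguments expand $s_n$, isolate the cross term $a_n b_{n-1}$, and use the hypothesis $a_1 b_0 = 0$ to eliminate the boundary term, with the paper packaging this via the auxiliary sequence $c_n = a_n b_{n-1}$ and the termwise identity $s_n = t_n + (c_{n+1} - c_n)$ while you perform the index shift directly.
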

\begin{proof}
Let $c_n=a_n b_{n-1}$ for each $1\leq n \leq N,$ and observe that 
\begin{align*}s_n&=a_n(b_n-b_{n-1})=b_n(a_n-a_{n+1})+(a_{n+1}b_n-a_nb_{n-1})\\
&=b_n(a_n-a_{n+1})+(c_{n+1}-c_n)=t_n+(c_{n+1}-c_n),
\end{align*}
for all $1\leq n\leq N-1.$ Further, notice that $c_1=a_1b_0=0,$ as $a_1=0$ or $b_0=0.$ Thus, by the above
\begin{align*}
\sum_{n=1}^{N}s_n&=a_N(b_N-b_{N-1})+\sum_{n=1}^{N-1}s_n\\
&=a_Nb_N-c_N+\sum_{n=1}^{N-1}t_n+\sum_{n=1}^{N-1}(c_{n+1}-c_n)\\
&=a_Nb_N-c_N+c_N-c_1+\sum_{n=1}^{N-1}t_n=\sum_{n=1}^{N}t_n.
\end{align*}
\end{proof} 
Next, we use Theorem \ref{thm:meas} to study the distribution of distances in $d$-dimensional cut-out sets. Suppose $(l_j)_{j=1}^{\infty}$ is a positive sequence such that $\sum_{j=1}^{\infty}l_j<\infty$. Then for each $n\geq 1,$ and $k\geq 1,$ set
$$p(n,k):=\sum_{j=n}^{n+k-1}l_{j}.$$ 
For all $\varepsilon\in(0,l_1]$ define
$$F(k,\varepsilon):=\max\{n\geq 1: p(n,k)\geq \varepsilon\}.$$
\begin{figure}[h]\label{fig:jumps}
\includegraphics[width=16cm]{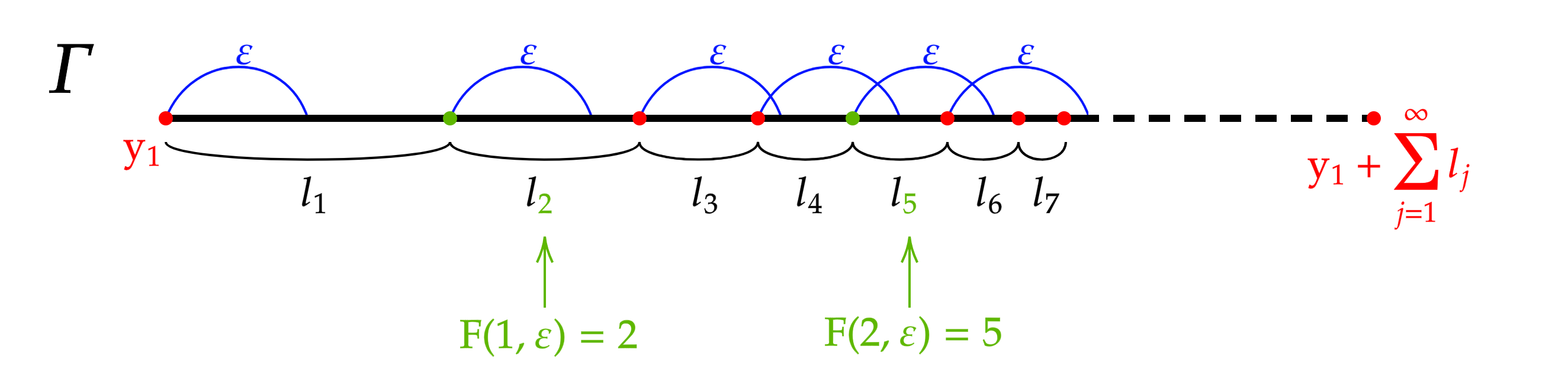}
\caption{If $\Gamma=I\setminus \cup_{j=1}^{\infty}I_j$ is a monotone cut-out set, with gap sequence $(l_j)_{j=1}^{\infty},$ then $F(k,\varepsilon)$ provides control over the set of distances in $\Gamma.$}
\centering
\end{figure}
\begin{lemma}\label{thm:Lemma} Suppose $\mathcal{L}=(l_j)_{j=1}^{\infty}$ has Minkowski dimension $d\in(0,1)$, with $0<\underline{L}=\liminf_{j\to\infty }l_jj^{1/d}\leq \limsup_{j\to\infty}{l_jj^{1/d}}=\overline{L}<\infty.$ 
Assume $u(\varepsilon)$ is a positive, integer-valued function such that
\begin{enumerate}[(i)]
    \item $\lim_{\varepsilon\to 0^+}u(\varepsilon)=\infty,$ 
    \item $u(\varepsilon)=o(\varepsilon^{{d}/{(d-1)}})\text{ as } \varepsilon\to 0^+.$
\end{enumerate}
Then
\begin{equation}\label{eq:part1}
\underline{L}^d\leq \liminf_{\varepsilon\to 0^+}\min_{1\leq k \leq u(\varepsilon)} F(k,\varepsilon)\left(\frac{\varepsilon}{k}\right)^d\leq \limsup_{\varepsilon\to 0^+}\sup_{k\geq 1}F(k,\varepsilon)\left(\frac{\varepsilon}{k}\right)^d\leq \overline{L}^d.  
\end{equation}
In particular, if $\mathcal{L}$ is Minkowski measurable, then
\begin{equation}\label{eq:part2}
  \lim_{\varepsilon\to 0^+}\min_{1\leq k \leq u(\varepsilon)} F(k,\varepsilon)\left(\frac{\varepsilon}{k}\right)^d= \lim_{\varepsilon\to 0^+}\sup_{k\geq 1}F(k,\varepsilon)\left(\frac{\varepsilon}{k}\right)^d=L^d, 
\end{equation}
where $L=\underline{L}=\overline{L}.$
\end{lemma}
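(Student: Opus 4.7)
The plan is to exploit the monotonicity of $(l_j)_{j=1}^{\infty}$ to sandwich
$$k l_{n+k-1} \leq p(n,k) \leq k l_n,$$
and then translate each half of this sandwich into a bound on $F(k,\varepsilon)$ via the $\limsup$ and $\liminf$ characterizations of $\overline{L}$ and $\underline{L}$, respectively. Both directions in (\ref{eq:part1}) then follow from a single $\eta$--$N$ argument, and (\ref{eq:part2}) is obtained by pinching when $\underline{L} = \overline{L}$.

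For the upper bound, fix $\eta > 0$ and $N_0$ so that $l_j \leq (\overline{L}+\eta) j^{-1/d}$ for $j \geq N_0$. If $n = F(k,\varepsilon)$, then $p(n,k) \geq \varepsilon$ combined with the upper sandwich forces $k l_n \geq \varepsilon$, and inverting the bound on $l_n$ yields either $n < N_0$ or $n \leq [(\overline{L}+\eta) k/\varepsilon]^d$. In either case,
$$F(k,\varepsilon) (\varepsilon/k)^d \leq \max\bigl(N_0 (\varepsilon/k)^d,\ (\overline{L}+\eta)^d\bigr) \leq \max\bigl(N_0 \varepsilon^d,\ (\overline{L}+\eta)^d\bigr).$$
The right-hand side is independent of $k$, and $N_0 \varepsilon^d \to 0$ as $\varepsilon \to 0^+$, so taking $\sup_{k \geq 1}$, then $\limsup_{\varepsilon \to 0^+}$, then $\eta \to 0$ yields the outermost inequality of (\ref{eq:part1}).

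For the lower bound, fix $0 < \eta < \underline{L}$ and $N_1$ so that $l_j \geq (\underline{L}-\eta) j^{-1/d}$ for $j \geq N_1$. The strategy is to exhibit, for each admissible $k$, an explicit witness to $p(n,k) \geq \varepsilon$. Using the lower sandwich, I set
$$n_0 := \bigl\lfloor [(\underline{L}-\eta) k/\varepsilon]^d \bigr\rfloor - k + 1,$$
so that $n_0 + k - 1 \leq [(\underline{L}-\eta) k/\varepsilon]^d$ and hence $p(n_0, k) \geq k l_{n_0 + k - 1} \geq \varepsilon$ provided $n_0 + k - 1 \geq N_1$ and $n_0 \geq 1$. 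The first condition is automatic for small $\varepsilon$. The second reduces to $k \leq (\underline{L}-\eta)^{d/(1-d)} \varepsilon^{-d/(1-d)}$, and this is exactly where hypothesis (ii) is decisive: since $d/(d-1) < 0$, one has $\varepsilon^{d/(d-1)} = \varepsilon^{-d/(1-d)}$, so $u(\varepsilon) = o(\varepsilon^{d/(d-1)})$ guarantees $n_0 \geq 1$ for all $1 \leq k \leq u(\varepsilon)$ and sufficiently small $\varepsilon$. The elementary floor bound then gives
$$F(k,\varepsilon) (\varepsilon/k)^d \geq n_0 (\varepsilon/k)^d \geq (\underline{L}-\eta)^d - k^{1-d} \varepsilon^d,$$
and hypothesis (ii) also forces $k^{1-d} \varepsilon^d = o(1)$ uniformly over $1 \leq k \leq u(\varepsilon)$. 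Taking $\min_{1 \leq k \leq u(\varepsilon)}$, then $\liminf_{\varepsilon \to 0^+}$, then $\eta \to 0$ produces the innermost inequality of (\ref{eq:part1}); hypothesis (i) simply ensures the minimum is nonvacuous.

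The main obstacle is calibrating the witness $n_0$ so that both the positivity requirement $n_0 \geq 1$ and the error term $k^{1-d} \varepsilon^d$ are controlled simultaneously and uniformly in $k$. Hypothesis (ii) is tight on both counts: a faster-growing $u(\varepsilon)$ would admit $k$ for which the first $k$ terms of $\mathcal{L}$ already fall short of mass $\varepsilon$, rendering the monotonicity sandwich useless. Finally, (\ref{eq:part2}) is immediate by pinching $\underline{L} = \overline{L} = L$ in (\ref{eq:part1}).
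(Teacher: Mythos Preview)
Your proof is correct and follows essentially the same strategy as the paper: sandwich $p(n,k)$ using the asymptotic size of $l_j$, then invert to bound $F(k,\varepsilon)$. The one notable difference is in the lower bound: you directly solve the elementary inequality $[(\underline{L}-\eta)k/\varepsilon]^d \geq k$ to see that hypothesis (ii) guarantees $n_0\geq 1$ for $1\leq k\leq u(\varepsilon)$, whereas the paper introduces the auxiliary concave function $r_c(x)=cx^d-x$, locates its maximum at $x^*=(cd)^{1/(1-d)}$, and argues that $r(k,\varepsilon,\delta)$ is increasing in $k$ on $[1,\lfloor x^*\rfloor]$ to force $r(k,\varepsilon,\delta)\geq r(1,\varepsilon,\delta)\geq M$. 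Your route is shorter and more transparent; the paper's route makes explicit why the growth restriction (ii) is essentially tight (their Remark following the proof). Your explicit invocation of the monotonicity of $(l_j)$ in the sandwich $k\,l_{n+k-1}\leq p(n,k)\leq k\,l_n$ is legitimate in context, since throughout the paper $\mathcal{L}$ is a non-increasing gap sequence; the paper's version instead bounds each summand by $(\overline L+\delta)j^{-1/d}$ and $(\underline L-\delta)j^{-1/d}$ and uses monotonicity of $j^{-1/d}$, which yields the same inequalities without that assumption.
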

\begin{proof}
Clearly, Line \ref{eq:part2} follows from Line \ref{eq:part1}, so we proceed to prove the inequalities listed in Line \ref{eq:part1}. To this end, first set $L_j=l_jj^{{1/d}}.$ Then let $\delta\in(0,\underline{L}/2)$ and observe that there exists a natural $M=M(\delta)$ such that  
$\underline{L}-\delta\leq L_n\leq \overline{L}+\delta, \text{ for all } n\geq M.$
Hence, for all $n\geq M,$ and fixed $k\geq 1,$ we may trivially estimate $p(n,k)$ from above and below to obtain the following two inequalities:
\begin{equation}\label{eq:triv1}
    (\underline{L}-\delta)\frac{k}{(n+k)^{1/d}}\leq \sum_{j=n}^{n+k-1}\frac{(\underline{L}-\delta)}{j^{{1/d}}} \leq p(n,k)\leq \sum_{j=n}^{n+k-1}\frac{(\overline{L}+\delta)}{j^{{1/d}}}\leq (\overline{L}+\delta)\frac{k}{n^{1/d}}, 
\end{equation}
for all $n\geq M.$ Now, for any $\varepsilon>0,$ let
$$q(k,\varepsilon,\delta)=\left\lceil(\overline{L}+\delta)^d\left(k/\varepsilon\right)^d\right\rceil+1, \quad \text{and} \quad  
r(k,\varepsilon,\delta)=\left\lfloor(\underline{L}-\delta)^d\left(k/{\varepsilon}\right)^d-k\right\rfloor.$$
Notice that $q(k,\varepsilon,\delta)$ is non-decreasing in $k$. Thus, if
$0<\varepsilon\leq \varepsilon_1(\delta):=({\overline{L}+\delta})/{(M-1)^{1/d}},$
then
$q(k,\varepsilon,\delta)\geq q(1,\varepsilon,\delta)\geq \left(\left({\overline{L}+\delta}\right)/{\varepsilon}\right)^d+1\geq M.$ Hence, for fixed $\delta>0$ we may take $n=q(k,\varepsilon,\delta)$ in the upper estimate of Line \ref{eq:triv1} to verify that
$p(q(k,\varepsilon,\delta),k)<\varepsilon$ for all $\varepsilon\leq\varepsilon_1(\delta),$ and all $k\geq 1.$ Therefore,
$F(k,\varepsilon)\leq q(k,\varepsilon,\delta)-1\leq (\overline{L}+\delta)^d\left({k}/{\varepsilon}\right)^d+1,$ for all $\varepsilon\leq \varepsilon_1(\delta),$
which implies
$$\sup_{k\geq 1}F(k,\varepsilon)\left(\frac{\varepsilon}{k}\right)^d\leq (\overline{L}+\delta)^d+\varepsilon^d$$
for all $\varepsilon\leq \varepsilon_1(\delta).$ Sending $\varepsilon\to 0^+,$ we obtain
$$\limsup_{\varepsilon\to 0^+}\sup_{k\geq 1}F(k,\varepsilon)\left(\frac{\varepsilon}{k}\right)^d\leq (\overline{L}+\delta)^d.$$ Thus, the upper bound of Line \ref{eq:triv1} follows upon sending $\delta\to 0^+$.
\par We seek to deduce a similar behavior for $r(k,\varepsilon,\delta)$ for all not too large $k$, but this requires more care. For $c>0,$ and $x\geq 1,$ we thus consider an auxiliary function for $r(k,\varepsilon,\delta)$ defined by
$r_c(x):=cx^d-x.$ 
Because $r_c''(x)=cd(d-1)x^{d-2},$ $c>0,$ $x\geq 1,$ and $0<d<1,$ it follows that $r_c(x)$ is strictly concave, and the $x$-coordinate of its maximum $x^{*}:=x^{*}(c)=(cd)^{1/(1-d)}$ grows without bound as $c\to\infty.$ Next, notice that 
$\lfloor r_{c}(k)\rfloor=r(k,\varepsilon,\delta),$ if $c=c(\varepsilon,\delta):=\left(\left(\underline{L}-\delta\right)/\varepsilon\right)^d,$ and let
$k^*(c):=\lfloor x^*(c) \rfloor.$ Observe that $x^*(c)$ is increasing in $c,$ and when $\varepsilon$ is held fixed, $c(\varepsilon,\delta)$ is strictly decreasing for $\delta\in(0,\underline{L}/2).$ Thus, 
$$u_0(\varepsilon):=k^*(c(\varepsilon,\underline{L}/2))\leq k^*(c(\varepsilon,\delta))$$
for all $\delta\in(0,\underline{L}/2).$
Now, notice that for all $k$ in the range
$1\leq k \leq u_0(\varepsilon),$ 
$r(k,\varepsilon,\delta)\geq r(1,\varepsilon,\delta)\geq\left(\left({\underline{L}-\delta}\right)/{\varepsilon}\right)^d-2.$ Hence, if $0<\varepsilon\leq \varepsilon_2(\delta):=\left({\underline{L}-\delta}\right)/{(M+2)^{1/d}},$ then for all $1\leq k\leq u_0(\varepsilon),$ we have $r(k,\varepsilon,\delta)\geq r(1,\varepsilon,\delta)\geq M.$ Because $u_0(\varepsilon)$ is independent of $\delta,$ $u_0(\varepsilon)=O(\varepsilon^{d/(d-1)}),$ and $u(\varepsilon)$ satisfies assumption (ii), there exists $\varepsilon_3>0,$ such that 
$u(\varepsilon)\leq u_0(\varepsilon),$
uniformly for all $\delta<\underline{L}/2,$ and all $\varepsilon\leq \varepsilon_3.$  
\par Thus, we may apply the estimates from Line \ref{eq:triv1} to obtain that
$p(r(k,\varepsilon,\delta),k)\geq \varepsilon,$ for all $\varepsilon\leq \min\left(\varepsilon_2(\delta),\varepsilon_3\right),$ and all $1\leq k\leq u(\varepsilon).$
By the definitions of $F(k,\varepsilon)$ and $r(k,\varepsilon,\delta)$ we obtain
$$F(k,\varepsilon)\geq r(k,\varepsilon,\delta)\geq (\underline{L}-\delta)^d\left(\frac{k}{\varepsilon}\right)^d-k-1$$ for all $\varepsilon\leq \min\left(\varepsilon_2(\delta),\varepsilon_3\right),$ and all $1\leq k\leq u(\varepsilon)$. Rearranging the above,
we have 
\begin{align*}
\min_{1\leq k\leq u(\varepsilon)}F(k,\varepsilon)\left(\frac{\varepsilon}{k}\right)^d&\geq \min_{1\leq k\leq u(\varepsilon)}\left((\underline{L}-\delta)^d-\frac{(k+1)\varepsilon^d}{k^d}\right)\\
&=(\underline{L}-\delta)^d-\varepsilon^d\max_{1\leq k\leq u(\varepsilon)}\frac{k+1}{k^d}\\
&=(\underline{L}-\delta)^d-\max\left(2\varepsilon^d,\frac{(u(\varepsilon)+1)\varepsilon^d}{u(\varepsilon)^d}\right) 
\end{align*}
for all $\varepsilon\leq \min(\varepsilon_2(\delta),\varepsilon_3),$
because $b(x)=(x+1)/{x^d}$ has a unique global minimum at $x_d=d/(1-d),$ for all $d\in(0,1),$ and is increasing for all $x>x_d.$ Thus, assumption (ii) on $u(\varepsilon)$ implies that for each fixed $\delta\in(0,\underline{L}/2),$
$$\min_{1\leq k\leq u(\varepsilon)}F(k,\varepsilon)\left(\frac{\varepsilon}{k}\right)^d\geq (\underline{L}-\delta)^d+o(1)\quad\text{ as }\quad\varepsilon\to 0^+,$$
and the result follows by sending $\varepsilon\to 0^+,$ then $\delta\to0^+.$
\end{proof}
\begin{remark} The growth restriction (ii) on $u(\varepsilon)$ is nearly the best possible based on the lower estimate in Line \ref{eq:triv1}. Indeed, because $r_c(x)$ is decreasing on $[x^*(c),\infty),$ $r_c(x)$ is invertible in this range. Thus, if $x_0$ is the unique solution of $r_c(x_0)=c{x_0}^d-x_0=c-1$ with $x_0>1,$ then $x_0$ is a function of $c$ (we shall write $x_0=x_0(c)$ where prudent) and $x^*(c)\leq x_0(c) \to\infty,$ as $c\to\infty$. Baring this fact in mind, notice that $r_c(x_0)=c{x_0}^d-x_0=c-1$ implies 
$$c=\frac{x_0-1}{{x_0}^d-1}\leq x_0^{1-d}(1+o(1)) \quad\text{ as }\quad c\to \infty.$$ Therefore, $x_0\geq \left(c/(1+o(1))\right)^{1/(1-d)}$ as $c\to\infty.$ In fact, we can use a similar idea to the above to establish that 
$$\lim_{c\to\infty}\frac{x_0(c)}{c^{1/(1-d)}}=1.$$ 
Thus, the largest range of $k$-values where $r(k,\varepsilon,\delta)\geq r(1,\varepsilon,\delta)$ is given by $1\leq k \leq x_0(c(\varepsilon,\delta)),$ where $x_0(c(\varepsilon,\delta))\sim\left(\underline{L}-\delta\right)^{d/(1-d)}\varepsilon^{d/(d-1)}=O(\varepsilon^{d/(d-1)})$ as $\varepsilon\to 0^+.$   
\end{remark}
\end{subsection}
\begin{subsection}{Proof of The Main Theorem}\label{sec:main}
In this section, we provide asymptotic upper and lower bounds on the packing function that suffice to prove Theorem \ref{thm:main}. To prove the upper bound, we will use a duality theorem from linear programming that can be found for example in \cite{yudin}, Corollary 3.3, on page 94. 
\par Linear programming has long been a useful tool in geometric optimization theory. A well-known example of this is the work of Cohn and Elkies in the area of sphere packing. In \cite{Cohn_Elkies_2003} the authors pioneered a linear programming approach to the densest Euclidean sphere packing problem that gives an upper bound on $\Delta_d$ for each integer $d\geq1$. This paved the way for Viazovska's Fields Medal earning resolution of the exact value of $\Delta_8$ \cite{Viazovska_2017}, and later $\Delta_{24}$ with collaborators  \cite{Cohn_Kumar_Miller_Radchenko_Viazovska_2017}.
\par After presenting the proof of the upper bound, we obtain the lower bound constructively, using a greedy algorithm. The two-sided bounds describe a method to produce asymptotically tight bounds between a particular integer linear program and its real input counterpart. 
In the following, we state the linear programming theory that we shall use in both matrix and explicit form for the convenience of the reader.
\begin{definition}(Linear programming)\label{thm:dual} Consider a linear programming  problem as follows:
\par Given a vector of weights $\mathbf{c}\in\mathbb{R}^n,$ and an $m\times n$ matrix $\mathbf{A}=[a_{ij}]\in \mathbb{R}^m\times \mathbb{R}^n,$ maximize the linear form 
\begin{align}\label{eqn: max1}
  \mathbf{c}^T\mathbf{x} =\sum_{j=1}^{n}c_jx_j
\end{align}
among all $\mathbf{x}\in\mathbb{R}^{n},$
subject to the conditions 
\begin{align}\label{eqn: max2}
    \sum_{j=1}^{n}a_{ij}x_j\leq b_i,\text{ for }i=1,\ldots,m,
\end{align}
\begin{align}\label{eqn: max3}
    x_j\geq0, \text{ for }j=1,\ldots,n.
\end{align}
Equivalently, we have the matrix constraints
$$\mathbf{A}\mathbf{x}\leq \mathbf{b}, \quad \text{ and } \quad \mathbf{x}\geq \mathbf{0},$$
where $\textbf{b}=(b_1,b_2,\ldots,b_m)^T\in\mathbb{R}^m.$
A linear programming problem following the above form is termed \textit{primal}. The corresponding \textit{dual linear programming  problem} is the task listed below:
\par 
Minimize the linear form
\begin{align}\label{eqn: min1}
    \mathbf{b}^T\mathbf{y}=\sum_{i=1}^{m}b_iy_i
\end{align}
among all $\textbf{y}\in\mathbb{R}^{m},$ subject to the conditions
\begin{align}\label{eqn: min2}
    \sum_{i=1}^{m}a_{ij}y_i\geq c_j,\text{ for }j=1,\ldots,n,
\end{align}
\begin{align}\label{eqn: min3}
    y_i\geq 0,\text{ for }i=1,\ldots,m.
\end{align}
Equivalently, there are the matrix constraints 
\begin{equation}\label{eq:min4}
\mathbf{A}^T\mathbf{y}\geq \mathbf{c}, \quad \text{ and } \quad \mathbf{y}\geq \mathbf{0}.
\end{equation}
We refer to a vector $\mathbf{x^{*}}=(x_1^{*},\ldots,x_n^{*})^T$ satisfying (\ref{eqn: max2})-(\ref{eqn: max3}) as a \textit{feasible solution} to the corresponding linear programming problem. 
\end{definition}
\begin{theorem}[Strong Duality]\label{thm:strongdual}
For any dual pair of linear programming  problems given by (\ref{eqn: max1})-(\ref{eqn: max3}) and (\ref{eqn: min1})-(\ref{eqn: min3}), two feasible programs ${\mathbf{x}^{*}}=(x_1^{*},\ldots,x_n^{*})^T,$ and $\mathbf{y}^{*}=(y_1^{*},\ldots,y_m^{*})^T,$ are respectively optimal for the primal problem and its dual if and only if 
$$\mathbf{c}^T\mathbf{x}^{*}=\sum_{j=1}^{n}c_jx_j^{*}=\sum_{i=1}^{m}b_iy_i^{*}=\mathbf{b}^T\mathbf{y}^{*}.$$
\end{theorem}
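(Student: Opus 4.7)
The plan is to establish Strong Duality in two stages. First I would prove the easier Weak Duality inequality, which yields the ``if'' direction almost by inspection. Then, for the ``only if'' direction, I would invoke Farkas' Lemma (or, equivalently, a separating hyperplane argument for a polyhedral cone) to produce a dual feasible point whose objective matches the primal optimum.

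For Weak Duality, I would argue that any primal feasible $\mathbf{x}$ and dual feasible $\mathbf{y}$ satisfy
$$\mathbf{c}^T\mathbf{x} \;\leq\; (\mathbf{A}^T\mathbf{y})^T\mathbf{x} \;=\; \mathbf{y}^T(\mathbf{A}\mathbf{x}) \;\leq\; \mathbf{y}^T\mathbf{b} \;=\; \mathbf{b}^T\mathbf{y},$$
where the first inequality uses $\mathbf{A}^T\mathbf{y}\geq \mathbf{c}$ together with $\mathbf{x}\geq \mathbf{0}$, and the second uses $\mathbf{A}\mathbf{x}\leq \mathbf{b}$ together with $\mathbf{y}\geq \mathbf{0}$. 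Consequently, if feasible $\mathbf{x}^*$ and $\mathbf{y}^*$ attain a common objective value $\mathbf{c}^T\mathbf{x}^*=\mathbf{b}^T\mathbf{y}^*$, then this common value both upper-bounds every primal objective and lower-bounds every dual objective, so neither can be improved; this is the ``if'' direction.

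For the converse, suppose $\mathbf{x}^*$ is primal optimal with value $v^*=\mathbf{c}^T\mathbf{x}^*$. I would show that the system
$$\mathbf{A}^T\mathbf{y}\geq \mathbf{c},\qquad \mathbf{y}\geq \mathbf{0},\qquad \mathbf{b}^T\mathbf{y}\leq v^*$$
admits a solution; combined with Weak Duality such a $\mathbf{y}$ forces $\mathbf{b}^T\mathbf{y}=v^*$, making it dual optimal and completing the proof. To produce $\mathbf{y}$, I would argue by contradiction: if no such $\mathbf{y}$ exists, Farkas' Lemma, applied to the augmented inequality system, yields a nonnegative combination of the constraints certifying the existence of a primal feasible $\mathbf{x}'$ with $\mathbf{c}^T\mathbf{x}'>v^*$, contradicting the optimality of $\mathbf{x}^*$.

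The main obstacle lies not in the algebraic manipulations but in the Farkas step, which rests on the closedness of the polyhedral cone $\{\mathbf{A}^T\mathbf{y}:\mathbf{y}\geq\mathbf{0}\}$ and the Separating Hyperplane Theorem. This is the one place where the finite-dimensional, polyhedral structure is essential and cannot be bypassed by generic compactness arguments. Once Farkas' Lemma is in hand, both implications of Theorem \ref{thm:strongdual} reduce to the chain of inequalities above, and the theorem follows.
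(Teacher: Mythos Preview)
The paper does not prove Theorem~\ref{thm:strongdual}; it is stated as a standard result from the linear programming literature, with a citation to \cite{yudin}, Corollary~3.3, and is then used as a black box in the proofs of Theorems~\ref{thm:upper} and~\ref{thm:exact}. Your sketch---Weak Duality for the ``if'' direction, then Farkas' Lemma to manufacture a dual optimizer for the ``only if'' direction---is the standard textbook route and is correct in outline; there is nothing to compare it against in the paper itself.
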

\begin{theorem}[Upper Bound]\label{thm:upper} Fix $d\in(0,1),$ and $0<L<\infty.$ Assume $\Gamma=I\setminus\cup_{j=1}^{\infty}I_j$ is a cut-out set of Minkowski dimension $d$ satisfying Properties I and II, with $l_jj^{1/d}\sim L$ as $j\to\infty.$ Then 
$$\limsup_{\varepsilon\to0^{+}}N(\Gamma,\varepsilon)\varepsilon^d\leq {L}^d A_d.$$
\end{theorem}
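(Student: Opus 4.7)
The plan is to recast the packing on monotone $\Gamma$ as a sum constraint indexed by jump sizes and use the duality of Theorem \ref{thm:strongdual} with the dual vector $y_k = 1/(k(k+1))$, evaluating the resulting bound via Lemma \ref{thm:Lemma} and Lemma \ref{combin}. First, using Proposition \ref{thm:simpchar}, write $\Gamma = \{y_k\}_{k \geq 1} \cup \{y_\infty\}$ with $y_{k+1} - y_k = l_k$, so that an $N$-point $\varepsilon$-packing chooses (up to a $+1$ for the limit point) indices $1 \leq n_1 < \cdots < n_N$ with $n_i \leq F(k_i,\varepsilon)$ for each jump $k_i := n_{i+1} - n_i$. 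Since $F(\cdot,\varepsilon)$ is non-decreasing in $k$, a bubble-sort swap argument on adjacent out-of-order pairs preserves feasibility and shows one may assume $k_1 \leq k_2 \leq \cdots \leq k_{N-1}$ without loss. Letting $x_k$ count jumps of size exactly $k$ and $S_k := \sum_{k' \leq k} k' x_{k'}$, the constraint on the position of the last jump of size $\leq k$ yields
\[
S_k \;\leq\; F(k,\varepsilon) + k - 1 \qquad \text{for all } k \geq 1.
\]

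The key identity comes from the telescoping $\sum_{k \geq k_i} 1/(k(k+1)) = 1/k_i$: swapping the order of summation,
\[
N - 1 \;=\; \sum_{i=1}^{N-1} k_i \sum_{k \geq k_i} \frac{1}{k(k+1)} \;=\; \sum_{k \geq 1} \frac{S_k}{k(k+1)}.
\]
This is precisely the weak-duality pairing between the primal $(x_k)$ and the dual $y_k = 1/(k(k+1))$, which is feasible because $k \sum_{j \geq k} y_j = 1$ telescopes exactly. To keep the resulting bound finite, one truncates at a threshold $K = K(\varepsilon) \geq k_M$. Because greedy (leftmost-first) packings are optimal in one dimension and each greedy step takes $k_i = \min\{k : F(k,\varepsilon) \geq n_i\}$, the bound $n_{N-1} \leq F(\infty,\varepsilon) = O(\varepsilon^{-d/(1-d)})$ together with $F(k,\varepsilon) \leq (L^d + o(1))(k/\varepsilon)^d$ from Lemma \ref{thm:Lemma} yields $k_M \leq C\varepsilon^{-d/(1-d)}$. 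Taking any polynomial $K \geq k_M$, the identity splits as
\[
N - 1 \;=\; \sum_{k=1}^{K-1} \frac{S_k}{k(k+1)} + \frac{S_K}{K} \;\leq\; \sum_{k=1}^{K-1} \frac{F(k,\varepsilon) + k - 1}{k(k+1)} + \frac{F(K,\varepsilon) + K}{K},
\]
where $S_K = S_\infty = n_N - n_1 \leq F(k_M,\varepsilon) + k_M \leq F(K,\varepsilon) + K$ for such $K$.

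To finish, multiplying by $\varepsilon^d$ and invoking Lemma \ref{thm:Lemma} yields
\[
\varepsilon^d \sum_{k=1}^{K-1} \frac{F(k,\varepsilon)}{k(k+1)} \;\leq\; (L^d + o(1)) \sum_{k \geq 1} \frac{k^{d-1}}{k+1},
\]
and Lemma \ref{combin} applied with $a_j = 1/j$, $b_j = j^d$, $b_0 = 0$ identifies this series as $A_d = \sum_k (k^d - (k-1)^d)/k$. The linear correction $\sum_{k < K}(k-1)/(k(k+1)) = O(\log K)$ contributes $O(\varepsilon^d \log(1/\varepsilon)) \to 0$ under the polynomial choice of $K$, while $\varepsilon^d F(K,\varepsilon)/K \leq (L^d + o(1)) K^{d-1} \to 0$ since $d < 1$. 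Taking $\limsup_{\varepsilon \to 0^+}$ produces $\limsup N(\Gamma,\varepsilon)\varepsilon^d \leq L^d A_d$. The principal obstacle is the unavoidable $+k-1$ slack in the constraint on $S_k$, which yields a $\log K$ correction in the dual sum; polynomially bounding the maximum jump $k_M$ via greedy optimality together with Lemma \ref{thm:Lemma} is precisely what renders this correction subleading compared with $\varepsilon^{-d}$.
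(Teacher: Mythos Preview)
Your approach is essentially the paper's: the dual vector $y_k=\tfrac{1}{k(k+1)}$ is exactly the paper's $g_k^*$, your constraint $S_k\le F(k,\varepsilon)+k-1$ is the paper's Line~\eqref{eq:jumps1}, and your telescoping identity $N-1=\sum_k S_k/(k(k+1))$ is the weak-duality pairing that the paper obtains by formally invoking Theorem~\ref{thm:strongdual}. Your presentation is arguably more direct, since you bypass the primal optimizer $\mathbf f^*$ and work straight from the dual bound; Lemma~\ref{combin} then identifies $\sum_k k^{d-1}/(k+1)$ with $A_d$ just as in the paper.

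There is one genuine slip in the truncation step. You claim that greedy optimality together with the \emph{upper} bound $F(k,\varepsilon)\le (L^d+o(1))(k/\varepsilon)^d$ from Lemma~\ref{thm:Lemma} yields $k_M\le C\varepsilon^{-d/(1-d)}$. But greedy minimality gives $F(k_M-1,\varepsilon)<n_{N-1}$, so an upper bound on $F$ produces a \emph{lower} bound on $k_M$, not an upper bound; and the lower bound on $F$ in Lemma~\ref{thm:Lemma} only holds for $k=o(\varepsilon^{-d/(1-d)})$, which is circular. In fact the final greedy jump $k_{N-1}$ can be arbitrarily large when $n_{N-1}$ sits near $F(\infty,\varepsilon)$, since $p(n_{N-1},k)$ then creeps up to $\varepsilon$ very slowly. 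The fix is immediate: for every $i\le N-2$ one has $k_i=n_{i+1}-n_i<n_{N-1}\le F(\infty,\varepsilon)=O(\varepsilon^{-d/(1-d)})$, so all jumps \emph{except possibly the last} are polynomially bounded. Separating off that single last jump (it contributes $+1$ to $N$, exactly as the paper's ``$-3$'' absorbs it via the argument that $\sum_{k\ge K(\varepsilon)}f_k\le 1$) lets you truncate at $K=F(\infty,\varepsilon)$ and your estimates go through unchanged.
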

\begin{proof}[Proof of Theorem \ref{thm:upper}]
Let $\omega_{P(\varepsilon)}$ be a configuration of $P(\varepsilon)$ points within $\Gamma,$ with separation $\delta\left(\omega_{P(\varepsilon)}\right)\geq \varepsilon.$ Using Proposition \ref{thm:simpchar}, we may order the elements of $\omega_{P(\varepsilon)}$ according to 
$$y_1=0\leq x_1<x_2<\cdots<x_{P(\varepsilon)}\leq m_1(I),$$ 
where we have chosen $y_1=0,$ without loss of generality.
Further, for each $1\leq n \leq P(\varepsilon)-1,$ Proposition \ref{thm:simpchar} provides a subsequence $j_n,$ for which $x_n=y_{j_n}.$ If $k_n=j_{n+1}-j_n,$ we may then represent the sequence of (adjacent) distances in $\omega_{P(\varepsilon)}\setminus\{x_{P(\varepsilon)}\}$ by 
$$x_{n+1}-x_n=y_{j_{n+1}}-y_{j_n}=\sum_{j=j_{n}+1}^{j_{n+1}}l_j=\sum_{j=j_n+1}^{j_n+k_n}l_j.$$ Our first task is to reduce the problem of upper bounding $N(\Gamma,\varepsilon)$ to the problem of counting the maximal number of occurrences of sums of $k$ terms in the set of adjacent distances in $\Gamma$. 
For each $k\geq 1,$ we thus define the frequency
$$f(k,\omega_{P(\varepsilon)})=\#\left\{1\leq n\leq P(\varepsilon)-1: k_n=k \right\}.$$ 
Let $$K(\varepsilon)=\min\left\{n\geq 1: \sum_{j=n}^{\infty}l_j\leq \varepsilon\right\},$$
and observe that
\begin{align}\label{eq:F(K)}
F(k,\varepsilon)\leq K(\varepsilon),\text{ 
uniformly, for all $k\geq 1.$}
\end{align}
 Indeed, if not, then we can find $k'\geq 1$ such that $F(k',\varepsilon)\geq K(\varepsilon)+1,$ and
$$\varepsilon\leq \sum_{j=F(k',\varepsilon)}^{F(k',\varepsilon)+k'-1}l_j\leq \sum_{j=K(\varepsilon)+1}^{\infty}l_j=-l_{K(\varepsilon)}+\sum_{j=K(\varepsilon)}^{\infty}l_j<\varepsilon,$$ a clear contradiction. But it is also clear that 
$$\#\left(\omega_{P(\varepsilon)}\cap \left(\{y_n\in\Gamma\text{ : } n\geq K(\varepsilon)\}\cup \{y_{1}+\sum_{j=1}^{\infty}l_j\}\right)\right)\leq 2,$$
by the definition of $K(\varepsilon)$ and the $\varepsilon$-separation of $\omega_{P(\varepsilon)}.$ Thus, 
$$P(\varepsilon)-2\leq \sum_{k=1}^{\infty}f(k,\omega_{P(\varepsilon)}).$$
Since $\Gamma$ is monotone and the quantity $jf(j,\omega_{P(\varepsilon)})$ represents the total number of integers in the range $1,2,\ldots,F(j,\varepsilon)+j-1$ that are exhausted by $j$-term progressions, 
\begin{equation}\label{eq:jumps1}\sum_{j=1}^{k}jf(j,\omega_{P(\varepsilon)})\leq F(k,\varepsilon)+k-1
\end{equation}
for all $k\geq 1.$ Next we claim that $
\sum_{k=K(\varepsilon)}^{\infty}f(k,\omega_{P(\varepsilon)})\leq 1.$ We accomplish this by showing that if there exists an integer $k'\geq K(\varepsilon)$ such that $f(k',\omega_{P(\varepsilon)})\geq 1,$ then $f(k'',\omega_{P(\varepsilon)})=0$ for any $k''\geq k'$. This ensures that the total sum over all $k\geq K(\varepsilon)$ is always at most $1.$ Assuming $f(k',\omega_{P(\varepsilon)})\geq 1,$ we obtain
\begin{align*}
  k'+k''f(k'',\omega_{P(\varepsilon)})&\leq k'f(k',\omega_{P(\varepsilon)})+k''f(k'',\omega_{P(\varepsilon)})\\
  &\leq F(k'',\varepsilon)+k''-1\\
  &\leq K(\varepsilon)+k''-1\\
  &\leq k'+k''-1,
\end{align*}
by Lines \ref{eq:F(K)} and \ref{eq:jumps1}. Thus $f(k'',\omega_{P(\varepsilon)})\leq(k''-1)/k''<1,$ which implies $f(k'',\omega_{P(\varepsilon)})=0,$ as $f(k'',\omega_{P(\varepsilon)})\in\mathbb{Z}.$ From these deductions, observe that 
$$P(\varepsilon)-3\leq \sum_{k=1}^{K(\varepsilon)}f(k,\omega_{P(\varepsilon)}).$$
Thus, we have reached the desired reduction. 
\par 
Next, let $\delta>0.$ Then by Lemma \ref{thm:Lemma}, we may find $\varepsilon_0=\varepsilon_0(\delta)>0$ such that for all $0<\varepsilon\leq \varepsilon_0,$ and for all $k\geq 1,$
\begin{equation}\label{eq:jumps2}
    F(k,\varepsilon)\leq (L+\delta)^d\left(\frac{k}{\varepsilon}\right)^d.
\end{equation}
Therefore, for each $1\leq k\leq K(\varepsilon),$ we may put together Lines \ref{eq:jumps1} and \ref{eq:jumps2} to obtain
\begin{equation*}\sum_{j=1}^{k}jf(j,\omega_{P(\varepsilon)})\leq F(k,\varepsilon)+k-1\leq (L+\delta)^d\left(\frac{k}{\varepsilon}\right)^d+k-1.
\end{equation*}
Next, let $f_k=f(k,\omega_{P(\varepsilon)}),$ for each $1\leq k\leq K(\varepsilon).$ We will derive an upper bound for $N(\Gamma,\varepsilon)-3$ from the following linear programming  problem:\par 
Maximize the linear form 
$$\sum_{k=1}^{K(\varepsilon)}f_k$$
over the real numbers, subject to the constraints
$$\sum_{j=1}^{k}jf_j\leq (L+\delta)^d\left(\frac{k}{\varepsilon}\right)^d+k-1,$$ 
and $f_k\geq 0,$ for all $1\leq k\leq K(\varepsilon).$ We claim that $\mathbf{f}^{*}=\left(f_1^{*},\ldots,f_{K(\varepsilon)}^{*}\right)^T$ defined by
$$f_k^*:=\frac{k^d-(k-1)^d}{k}\left(\frac{L+\delta}{\varepsilon}\right)^d+\frac{1}{k},\quad \text{for each}\quad  2\leq k\leq K(\varepsilon),$$ 
$$f_1^*:=\left(\frac{L+\delta}{\varepsilon}\right)^d$$
is a feasible solution. Indeed, first notice that non-negativity is trivially satisfied as $d>0.$ For each $k,$ we also have a telescoping sum saturating the corresponding upper bound: 
\begin{align*} \sum_{j=1}^{k}jf_j^{*}&=\left(\frac{L+\delta}{\varepsilon}\right)^d+\sum_{j=2}^{k}j\left(\frac{j^d-(j-1)^d}{j}\left(\frac{L+\delta}{\varepsilon}\right)^d+\frac{1}{j}\right)\\
    &=\left(\frac{L+\delta}{\varepsilon}\right)^d+k-1+\sum_{j=2}^{k} \left(j^d-(j-1)^d\right)\left(\frac{L+\delta}{\varepsilon}\right)^d\\
    &=\left(\frac{L+\delta}{\varepsilon}\right)^dk^d+k-1.
\end{align*}
To use duality, we will first phrase the above linear programming problem in the notation of Definition \ref{thm:dual}. For this purpose, define a $K(\varepsilon)\times K(\varepsilon)$ matrix
\[
\mathbf{A}=
\begin{bmatrix} 
    1 & 0 & \dots & 0 & 0\\
    1 & 2 & \dots & 0 & 0\\
    \vdots & \vdots & \ddots & \vdots & \vdots\\ 
    1 & 2 & \dots & K(\varepsilon)-1 & 0\\
    1 & 2 & \dots & K(\varepsilon)-1 & K(\varepsilon)\\
\end{bmatrix},
\]
as well as the $K(\varepsilon)$-dimensional vectors  $\mathbf{f}=\left(f_1,\ldots,f_{K(\varepsilon)}\right)^T$, $\mathbf{c}=\left(1,\ldots,1\right)^T,$ and
\[
\mathbf{b}=
\begin{bmatrix}
    \left(\frac{L+\delta}{\varepsilon}\right)^d 1^d+0 \\
    \left(\frac{L+\delta}{\varepsilon}\right)^d 2^d+1\\
    \left(\frac{L+\delta}{\varepsilon}\right)^d 3^d+2\\
    \vdots \\
    \left(\frac{L+\delta}{\varepsilon}\right)^d(K(\varepsilon)-1)^d+K(\varepsilon)-2\\
    \left(\frac{L+\delta}{\varepsilon}\right)^dK(\varepsilon)^d+K(\varepsilon)-1\\
\end{bmatrix}.
\]
Then, in matrix form our problem is to maximize the linear form $\mathbf{c}^T\mathbf{f}$ among all $\mathbf{f}\in\mathbb{R}^{K(\varepsilon)},$
subject to the constraints
$\mathbf{Af}\leq\mathbf{b},$ and $\mathbf{f}\geq \mathbf{0}.$
From this, we may easily determine the form of the dual problem using Lines \ref{eqn: min1} and \ref{eq:min4}:
\par Minimize the linear form 
    $$\sum_{k=1}^{K(\varepsilon)}\left(\left(\frac{L+\delta}{\varepsilon}\right)^dk^d +k-1\right)g_k$$
among all $\mathbf{g}=\left(g_1,\ldots,g_{K(\varepsilon)}\right)^T\in\mathbb{R}^{K(\varepsilon)},$
subject to the conditions that for each fixed $1\leq j\leq K(\varepsilon),$
\begin{align}\label{eq:data}
\sum_{k=j}^{K(\varepsilon)}jg_k\geq 1, 
\end{align}
and $g_k\geq 0,$
for all $1\leq k \leq K(\varepsilon).$ By replacing the inequalities in Line \ref{eq:data} with equalities, we may recursively extract a non-negative sequence $g_k$ that is well-behaved for the dual problem. We demonstrate this as follows. Let 
$\textbf{g}^{*}=\left(g_1^{*},\ldots,g_{K(\varepsilon)}^{*}\right)^T.$ Beginning with $j=K(\varepsilon)$ in Line \ref{eq:data}, we set
$g_{K(\varepsilon)}^{*}={1}/{K(\varepsilon)}.$ Decreasing $j$ by 1 at each step; set
$g_k^{*}={1}/{k}-{1}/({k+1}),$ for $1\leq k\leq K(\varepsilon)-1.$ By the nature of this construction, $\mathbf{g}^*$ is feasible. We shall now establish that the conditions of Strong Duality Theorem \ref{thm:strongdual} apply to the feasible solutions $\mathbf{f}^*$ and $\mathbf{g}^*.$ 
 Observe that if 
 \begin{align*}
  a_k&:=\frac{1}{k},\quad \text{for }k\geq 1,\\
  b_k&:=\left(\frac{L+\delta}{\varepsilon}\right)^dk^d+k-1,\quad \text{for }k\geq 1,\quad b_0:=0,
 \end{align*}
 $s_k:=a_k(b_k-b_{k-1}),$ for $1\leq k\leq K(\varepsilon),$ $t_k:=b_k(a_k-a_{k+1})$ for $1\leq k\leq K(\varepsilon)-1,$ and $t_{K(\varepsilon)}:=a_{K(\varepsilon)}b_{K(\varepsilon)}$ as in Lemma \ref{combin}; then
 \begin{align*}
    f_k^*=s_k,\quad\text{and}\quad\left(\left(\frac{L+\delta}{\varepsilon}\right)^dk^d+k-1\right)g_k^*=t_k,
 \end{align*}
 for each $1\leq k\leq K(\varepsilon).$ Therefore, by Lemma \ref{combin} 
$$\sum_{k=1}^{K(\varepsilon)}f_k^*=\sum_{k=1}^{K(\varepsilon)}s_k=\sum_{k=1}^{K(\varepsilon)}t_k=\sum_{k=1}^{K(\varepsilon)}\left(\left(\frac{L+\delta}{\varepsilon}\right)^dk^d+k-1\right)g_k^*,$$
 verifying that $\mathbf{f}^{*},$ and $\mathbf{g}^{*}$ are optimal solutions for the primal problem and its dual, respectively, by Strong Duality Theorem \ref{thm:strongdual}. Since $\omega_{P(\varepsilon)}$ was an arbitrary subset of $\Gamma$ with separation $\delta(\omega_{P(\varepsilon)})\geq \varepsilon,$ we obtain
\begin{equation}\label{eq: logerror}
\begin{split}
    N(\Gamma,\varepsilon)-3\leq \sum_{k=1}^{K(\varepsilon)}f_{k}^{*}=\left(\frac{L+\delta}{\varepsilon}\right)^d \sum_{k=1}^{K(\varepsilon)}\frac{k^d-(k-1)^d}{k}+\sum_{k=2}^{K(\varepsilon)}\frac{1}{k}\\
\leq \left(\frac{L+\delta}{\varepsilon}\right)^d \sum_{k=1}^{\infty}\frac{k^d-(k-1)^d}{k}+\int_{1}^{K(\varepsilon)}\frac{1}{x}dx\\
=\left(\frac{L+\delta}{\varepsilon}\right)^dA_d+\log(K(\varepsilon)).
\end{split}
\end{equation} 
Because $K(\varepsilon)\to\infty,$ as $\varepsilon\to 0^+,$ there exists $\varepsilon_1=\varepsilon_1(\delta)>0$ such that for all $0<\varepsilon\leq\varepsilon_1,$ $\ell_j\leq (L+\delta)j^{-1/d}$ for all $j\geq K(\varepsilon)-1.$ Therefore, 
$$\varepsilon<\sum_{j=K(\varepsilon)-1}^{\infty}\ell_j\leq\sum_{j=K(\varepsilon)-1}^{\infty}\left(L+\delta\right)j^{-1/d}\leq(L+\delta)\int_{K(\varepsilon)-2}^{\infty}x^{-1/d}dx=\frac{d}{1-d}(L+\delta)(K(\varepsilon)-2)^{1-1/d}.$$
Thus 
$$K(\varepsilon)<\left(\frac{d}{1-d}\left(L+\delta\right)\right)^\frac{d}{1-d}\varepsilon^\frac{d}{d-1}+2=c_d\varepsilon^\frac{d}{d-1}+2.$$
Applying this to Inequality \ref{eq: logerror} we obtain 
\begin{align*}
\varepsilon^d(N(\Gamma,\varepsilon)-3)\leq(L+\delta)^dA_d+\varepsilon^d\log(K(\varepsilon))=(L+\delta)^dA_d+\varepsilon^d\log(c_d\varepsilon^{\frac{d}{d-1}}+2), 
\end{align*}
for all $0<\varepsilon\leq \min(\varepsilon_0,\varepsilon_1).$ Taking the limit supremum as $\varepsilon\to 0^+$ on each side, it follows that 
\begin{align*}
\limsup_{\varepsilon\to 0^+}N(\Gamma,\varepsilon)\varepsilon^d\leq(L+\delta)^dA_d+\limsup_{\varepsilon\to 0^+}\frac{d}{1-d}\varepsilon^d\log\left(\frac{1}{\varepsilon}\right)=(L+\delta)^dA_d,
\end{align*} where we apply L'H\^{o}spital's Rule to obtain the final equality. Finally, the result follows by sending $\delta\to 0^+.$
\end{proof}
\begin{theorem}[Lower Bound]\label{thm:lower}
    Fix $d\in(0,1),$ and $0<L<\infty.$ Assume $\Gamma=I\setminus \cup_{j=1}^{\infty}I_j$ is a cut-out set of Minkowski dimension $d$ satisfying Properties I and II, with $l_jj^{1/d}\sim L$ as $j\to\infty.$ Then 
$${L}^dA_d\leq \liminf_{\varepsilon\to0^{+}}N(\Gamma,\varepsilon)\varepsilon^d.$$
\end{theorem}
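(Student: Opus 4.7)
The plan is to construct an explicit packing whose size matches (up to lower order) the linear programming value $L^d A_d / \varepsilon^d$ that bounded the packing from above. The construction is a greedy algorithm on the indexed points $y_k$ of Proposition \ref{thm:simpchar}: starting with $y_1$, at each step I select the next point $y_{n_{i+1}}$ to be the one of smallest index greater than $n_i$ such that $\sum_{j=n_i}^{n_{i+1}-1} l_j \geq \varepsilon$. Because $\Gamma$ is monotone and the $l_j$ are non-increasing, the ``jump sizes'' $k_i := n_{i+1} - n_i$ form a non-decreasing sequence $k_1 \leq k_2 \leq \cdots$, so the greedy algorithm uses $1$-jumps until it exits the range where $l_n \geq \varepsilon$, then $2$-jumps until the sum of two consecutive $l_j$'s falls below $\varepsilon$, and so on.

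Next I count the number of jumps of each size. Let $p_k$ denote the index position at which the greedy first uses a $k$-jump (so $p_1 = 1$). A jump of size $k$ is used from a starting position $n$ precisely while $p(n,k) \geq \varepsilon$, i.e.\ $n \leq F(k,\varepsilon)$. A direct inductive argument gives $F(k-1,\varepsilon) < p_k \leq F(k-1,\varepsilon) + (k-1)$, so the number of $k$-jumps equals $f_k = (p_{k+1} - p_k)/k$. Summing over $k = 1, \ldots, u(\varepsilon)$ for an auxiliary cutoff $u(\varepsilon)$ with $u(\varepsilon) \to \infty$ and $u(\varepsilon) = o(\varepsilon^{d/(d-1)})$,
\begin{equation*}
N(\Gamma,\varepsilon) - 1 \;\geq\; \sum_{k=1}^{u(\varepsilon)} f_k \;=\; \sum_{k=1}^{u(\varepsilon)} \frac{p_{k+1} - p_k}{k}.
\end{equation*}
Applying the partial summation identity of Lemma \ref{combin} with $b_k = p_{k+1} - 1$ (so $b_0 = 0$) and $a_k = 1/k$, and using $p_{k+1} - 1 \geq F(k,\varepsilon)$, the right side is bounded below by
\begin{equation*}
\sum_{k=1}^{u(\varepsilon) - 1} \frac{F(k,\varepsilon)}{k(k+1)} + \frac{F(u(\varepsilon), \varepsilon)}{u(\varepsilon)}.
\end{equation*}

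Now I invoke Lemma \ref{thm:Lemma}: for any $\delta \in (0, L)$, for sufficiently small $\varepsilon$ and uniformly in $1 \leq k \leq u(\varepsilon)$, one has $F(k,\varepsilon) \geq (L - \delta)^d (k/\varepsilon)^d$. Substituting this into the lower bound produces
\begin{equation*}
N(\Gamma,\varepsilon)\varepsilon^d \;\geq\; (L - \delta)^d \sum_{k=1}^{u(\varepsilon) - 1} \frac{k^{d-1}}{k+1} + o(1),
\end{equation*}
and sending $\varepsilon \to 0^+$ (so that $u(\varepsilon) \to \infty$) gives the liminf bound $(L-\delta)^d A_d$, where I use the identity $A_d = \sum_{k=1}^\infty k^{d-1}/(k+1)$ obtained by the same partial summation applied to the defining series for $A_d$ (with $b_k = k^d$, $a_k = 1/k$). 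The result follows upon sending $\delta \to 0^+$.

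The main technical obstacle is securing the uniform-in-$k$ lower bound $F(k,\varepsilon) \geq (L-\delta)^d (k/\varepsilon)^d$ across the growing window $k \leq u(\varepsilon)$; this is exactly the content of Lemma \ref{thm:Lemma}, and the choice $u(\varepsilon) = o(\varepsilon^{d/(d-1)})$ stays within the range to which that lemma applies. Once that uniformity is in hand, the remaining work is the Abel summation rearrangement and the observation that the tail $\sum_{k \geq u(\varepsilon)} k^{d-1}/(k+1)$ is negligible because the series converges (it behaves like $k^{d-2}$, and $d - 2 < -1$).
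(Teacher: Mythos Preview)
Your argument is correct and reaches the same conclusion as the paper, but by a genuinely different route. The paper \emph{prescribes} the number of $k$-jumps in advance: it sets $h_k=\lfloor (k^d-(k-1)^d)k^{-1}((L-\delta)/\varepsilon)^d\rfloor$ (the integer part of the LP optimizer from the upper bound proof), lays down $h_1$ one-jumps, then $h_2$ two-jumps, and so on, and verifies via the telescoping bound $\sum_{j\le k} jh_j\le F(k,\varepsilon)$ that the resulting configuration is $\varepsilon$-separated. Because of the floors, the count loses up to $v(\varepsilon)$ points, which forces the paper to introduce the extra cutoff $w(\varepsilon)=o(\varepsilon^{-d})$ and set $v(\varepsilon)=\min(u(\varepsilon),w(\varepsilon))$.

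You instead run the literal greedy walk on the $y_k$ and analyze its output: the monotonicity of $(l_j)$ makes the jump sizes non-decreasing, the two-sided estimate $F(k-1,\varepsilon)<p_k\le F(k-1,\varepsilon)+(k-1)$ locates the transition indices, and Abel summation (Lemma~\ref{combin}) converts $\sum_k (p_{k+1}-p_k)/k$ into $\sum_k F(k,\varepsilon)/(k(k+1))$ plus a boundary term. Feeding in Lemma~\ref{thm:Lemma} and the rewriting $A_d=\sum_{k\ge1} k^{d-1}/(k+1)$ (the same partial-summation identity the paper records in the proof of Proposition~\ref{thm:limiting}) yields the bound. Your version is slightly cleaner in that no floor functions appear and only the single growth condition $u(\varepsilon)=o(\varepsilon^{d/(d-1)})$ from Lemma~\ref{thm:Lemma} is needed; the paper's version has the advantage of making the match with the LP upper bound completely explicit.
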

As mentioned earlier, the constructive lower bound that we provide below is the consequence of a greedy algorithm. Greedy algorithms do a good job of providing first-order asymptotic behavior for various discrete optimization problems (see e.g. Rolfes's work for covering spheres \cite{Rolfes} and several works concerning Riesz $s$-energy and polarization \cite{Bilyk_Mastrianni_Matzke_Steinerberger_2023}, \cite{López-García_McCleary_2022}). We stress that Property II (Monotonicty) of $\Gamma$ guarantees that this method works in the presence of Property I. Sharpness Theorem \ref{thm:sharp2} demonstrates that sets with a more complicated, nonmonotonic interval ordering, such as the self-similar fractals of Definition \ref{def:self-similar}, may have optimizers that do not resemble the asymptotically optimal configurations that we construct below.
\begin{proof}[Proof of Theorem \ref{thm:lower}]
    We make slight alterations to the arguments of Theorem \ref{thm:upper}.
    Fix $\delta>0,$ and take $u(\varepsilon)$ as in Lemma \ref{thm:Lemma}. Then, according to the lemma, there exists $\varepsilon_0=\varepsilon_0(\delta)>0$ such that for all $0<\varepsilon\leq \varepsilon_0,$ the inequality  $F(k,\varepsilon)\geq (L-\delta)^d\left({k}/{\varepsilon}\right)^d$
    holds for all $1\leq k\leq u(\varepsilon).
    $ 
    Define $$h_k:=\Big\lfloor\frac{k^d-(k-1)^d}{k}\left(\frac{L-\delta}{\varepsilon}\right)^d\Big\rfloor,$$ for $1\leq k\leq  v(\varepsilon),$ where  $v(\varepsilon):=\min\left(u(\varepsilon),w(\varepsilon)\right),$ $w(\varepsilon)=o(\varepsilon^{-d})$ and $w(\varepsilon)\to\infty$ as $\varepsilon\to 0^+,$ and $w(\varepsilon)$ is positive integer-valued.\footnote{Since $\frac{d}{1-d}>d$ for each $d\in(0,1),$ the growth restriction necessary for this part of the proof is stronger than the restriction placed on $u(\varepsilon)$ in hypothesis (ii) of Theorem \ref{thm:Lemma}} Then it follows from the definition of the floor function and a telescoping identity that for each $1\leq k\leq v(\varepsilon)$,
    \begin{equation}\label{Fbound}
    \begin{split}
          F(k,\varepsilon)+k-1\geq F(k,\varepsilon)\geq&(L-\delta)^d\left(\frac{k}{\varepsilon}\right)^d \\
      &=\sum_{j=1}^{k}j\left(\frac{j^d-(j-1)^d}{j}\right) \left(\frac{L-\delta}{\varepsilon}\right)^d\geq \sum_{j=1}^{k}jh_j.
    \end{split}
    \end{equation}
Using the above, we describe points of a large configuration $\omega_{R(\varepsilon)}\subset \Gamma$ with $\delta(\omega_{R(\varepsilon)})\geq\varepsilon.$ To begin, we assume without loss of generality that $\min\Gamma=0$. Then let $H(0)=1,$ and for each $1\leq k\leq v(\varepsilon),$
set $H(k)=\sum_{j=1}^{k}jh_j.$
Further, define
$\omega_{R(\varepsilon)}:=\bigcup_{k=0}^{v(\varepsilon)}\omega_{h_k},$
where 
$$\omega_{h_k}:=\left\{\sum_{j=1}^{H(k-1)+nk}l_j: 1\leq n \leq h_k \right\}\subset\Gamma,$$
 for $1\leq k\leq v(\varepsilon),$ $h_0:=0,$ and $\omega_{0}:=\{0\}.$
Then observe that for each $1\leq k\leq v(\varepsilon),$ $\delta(\omega_{h_k})\geq \varepsilon,$ by Inequality \ref{Fbound}. 
Since $\#\omega_{h_k}=h_k,$ and the configurations are pairwise disjoint, we have
$R(\varepsilon)=\#\omega_{R(\varepsilon)}=1+\sum_{k=1}^{v(\varepsilon)}h_k.$ To finish the proof, we first compute 
$$\sum_{k=1}^{v(\varepsilon)}h_k\geq \sum_{k=1}^{v(\varepsilon)}\left(\frac{k^d-(k-1)^d}{k}\left(\frac{L-\delta}{\varepsilon}\right)^d-1\right)\\
\geq\left(\sum_{k=1}^{v(\varepsilon)}\frac{k^d-(k-1)^d}{k}\left(\frac{L-\delta}{\varepsilon}\right)^d\right)-v(\varepsilon).$$
Then we immediately obtain by maximality and the definition of $v(\varepsilon)$ that
$$N(\Gamma,\varepsilon)\geq 1+\sum_{k=1}^{v(\varepsilon)}h_k\geq \left(\sum_{k=1}^{v(\varepsilon)}\frac{k^d-(k-1)^d}{k}\left(\frac{L-\delta}{\varepsilon}\right)^d\right)+o(\varepsilon^{-d})\quad \text{ as }\quad\varepsilon\to 0^+.$$ 
Finally, we may rearrange the above into
$$N(\Gamma,\varepsilon)\varepsilon^d\geq \left(\sum_{k=1}^{v(\varepsilon)}\frac{k^d-(k-1)^d}{k}(L-\delta)^d\right)+o(1)\quad\text{ as }\quad \varepsilon\to 0^+,$$ 
and the result follows by first sending $\varepsilon\to 0^+,$ then $\delta\to0^{+}.$
\end{proof}
We now move on to the proof of Proposition \ref{thm:limiting}, which establishes the limiting behavior of $p_d.$ As mentioned earlier, we shall need an integral representation of the Digamma function.
\begin{definition}[Digamma Function] The \textit{Digamma function} $\psi(z)$ is defined as the logarithmic derivative of the Gamma function:
$$\psi(z)=\frac{d}{dz}{\log(\Gamma(z))}=\frac{\Gamma'(z)}{\Gamma(z)}, \quad \text{for Re}(z)>0,$$
where 
$$\Gamma(z)=\int_{0}^{\infty}t^{z-1}e^{-t}dt,\quad \text{for Re}(z)>0.$$
We will use the following integral representation of the Digamma function that is originally due to Gauss (see Whittaker and Watson for one proof of this fact \cite{Whittaker_Watson_1920b}):
\begin{equation}\label{digamma}
\psi(z)=\int_{0}^{\infty}\left(\frac{e^{-t}}{t}-\frac{e^{-zt}}{1-e^{-t}}\right)dt. 
\end{equation}
\end{definition}
\begin{proof}[Proof of Proposition \ref{thm:limiting}] Notice that the function 
$f(s):=\sum_{n=1}^{\infty}(n^s-(n-1)^s)/{n}$ providing the packing constant in Theorem \ref{thm:main} may be rewritten as a reflection of a Dirichlet series 
$$f(s)=\sum_{n=1}^{\infty}n^s\left(\frac{1}{n}-\frac{1}{n+1}\right)=\sum_{n=1}^{\infty}\frac{1}{(n+1)n^{1-s}},$$ using the partial summation formula provided in Lemma \ref{combin}. Indeed, if $g(s)=f(1-s),$ then since $f(s)$ converges for real $s<1,$ and diverges when $s>1,$ it follows from the convergence theory of Dirichlet series (see \cite{Hardy_Riesz_1915}) that the \textit{absissca of convergence} of the Dirichlet series $g(s)$ is $\sigma=0,$ so the series converges absolutely for all complex $s\in\mathbb{C}$ with $\text{Re}(s)>0.$ Next, notice that for real $s>0,$ and $n\geq 1$ the inequalities
$$\frac{1}{(n+2)(n+1)^s}\leq \int_{n}^{n+1}\frac{1}{(x+1)x^s}dx\leq \frac{1}{(n+1)n^s}$$
provide
$$g(s)-\frac{1}{3 \cdot 2^s}\leq \int_{1}^{\infty}\frac{1}{(x+1)x^s}dx\leq g(s).$$
The interior integral above may be represented as a difference of Digamma functions using the substitution $x=e^{t/2}$ and Equation \ref{digamma}:
\begin{align*}
    \frac{1}{2}\left(\psi\left(\frac{s+1}{2}\right)-\psi\left(\frac{s}{2}\right)\right)&=\frac{1}{2}\int_{0}^{\infty}\frac{e^{-\frac{s+1}{2}t}-e^{-\frac{s}{2}t}}{1-e^{-t}}dt\\
    &=\int_{1}^{\infty}\frac{1}{(x+1)x^s}dx.
\end{align*}
Noting that $2^{1-d}\to 1,$ as $d\to1^-,$ we have
\begin{align*}
    \lim_{d\to 1^{-}}p_d=\lim_{s\to 1^{-}}(1-s)f(s)=\lim_{s\to 0^+}sg(s)&=\lim_{s\to 0^+}s\int_{1}^{\infty}\frac{1}{(x+1)x^s}dx\\
    &=\lim_{s\to 0^+}\frac{s}{2}\left(\psi\left(\frac{s+1}{2}\right)-\psi\left(\frac{s}{2}\right)\right)\\
    &=-\lim_{s\to 0^+}s\psi(s)=1,
\end{align*}
because the only singularities of $\psi(s)$ are simple poles at each non-positive integer, and each has residue -1.
\end{proof}
\begin{remark}
The above shows that $f(s)$ possesses a simple pole of residue $1$ at $s=1.$
\end{remark}
\begin{proof}[Proof of Proposition \ref{thm:inv}] Clearly, Line \ref{eq:p2} follows from Line \ref{eq:p1} so we proceed to prove the inequalities in Line \ref{eq:p1}. This proof uses some of the ideas expressed in Theorem 4.1 of \cite{Lapidus1993}. Let $\varepsilon>0,$ and observe that the definition of $F(k,\varepsilon)$ preceding Lemma \ref{thm:Lemma} depends only on the sequence $(l_j)_{j=1}^{\infty}.$ Thus, if $\delta>0,$ we may find $\varepsilon_0=\varepsilon_0(\delta)>0$ such that $F(k,\varepsilon)\left(\varepsilon/k\right)^d\geq \left(\underline{L}-\delta\right)^d,$ for all $1\leq k\leq u(\varepsilon),$ just as in Theorem \ref{thm:lower}. Following the remaining arguments of Theorem \ref{thm:lower}, let $v(\varepsilon)$ be as defined there, then define 
$$h_k:=\left\lfloor\frac{k^d-(k-1)^d}{k}\left(\frac{\underline{L}-\delta}{\varepsilon}\right)^d\right\rfloor,$$
$H(k)=\sum_{j=1}^{k}jh_j,$ for $1\leq k\leq v(\varepsilon),$ and observe that the subsets
$$\gamma_{n,h_k}:=\left\{H(k-1)+(n-1)k+1,\ldots,H(k-1)+nk\right\}$$
obey the mass requirements 
$$\sum_{j\in\gamma_{n,h_k}}l_j\geq \varepsilon,$$
for all $1\leq n\leq h_k,$ and all
$1\leq k\leq v(\varepsilon).$ 
Letting $$\gamma'_{v(\varepsilon),h_{v(\varepsilon)}}:=\gamma_{v(\varepsilon),h_{v(\varepsilon)}}\bigcup\{n\in\mathbb{N}\text{ : }n\geq H(v(\varepsilon))+1\},$$ provides a partition of $\mathbb{N}$ containing $H(v(\varepsilon))$ subsets obeying the mass requirements. Therefore,
$$N(\mathcal{L},\varepsilon)\varepsilon^d\geq \varepsilon^d H(v(\varepsilon))\geq \left(\sum_{k=1}^{v(\varepsilon)}\frac{k^d-(k-1)^d}{k}\left(\underline{L}-\delta\right)^d\right)+o(1)\quad\text{ as }\quad \varepsilon\to 0^+,$$
and the result follows by first sending $\varepsilon\to 0^+,$ then $\delta\to 0^+.$
\par Moving on to the upper bound, we first claim that for any maximal partition $\omega_{N(\mathcal{L},\varepsilon)}:=\{\gamma_i:1\leq i\leq N(\mathcal{L},\varepsilon)\}$ realizing the mass requirements 
$$\sum_{j\in\gamma_i}l_j\geq \varepsilon,$$ one may assume without loss of generality that
\begin{align}\label{eq:wlog}
    \{j\}\in \omega_{N(\mathcal{L},\varepsilon)}, \quad \text{ for all } 1\leq j\leq F(1,\varepsilon).
\end{align} 
Indeed, if any such singleton $\{j\}$ does not belong to $\omega_{N(\mathcal{L},\varepsilon)},$ we can find $\gamma_{n_j}$ containing $j,$ then
define 
$\gamma'_{n_j}=\{j\}$ and redistribute the mass of $\gamma_{n_j}$ to any of the other remaining subsets of $\omega_{N(\mathcal{L},\varepsilon)}.$ This process relocates the mass of each such $l_j$ to a singleton without decreasing the number of sets in $\omega_{N(\mathcal{L},\varepsilon)},$ or violating the mass requirements. In the sequel, we will thus work under the assumption that Line \ref{eq:wlog} is satisfied. From this assumption, notice that 
$$(N(\mathcal{L},\varepsilon)- F(1,\varepsilon))\varepsilon\leq \sum_{i=F(1,\varepsilon)+1}^{N(\mathcal{L},\varepsilon)}\sum_{j\in\gamma_i}l_j=\sum_{j=F(1,\varepsilon)+1}^{\infty}l_j,$$ 
which implies 
\begin{align}\label{eq:tails}
    N(\mathcal{L},\varepsilon)\leq F(1,\varepsilon)+\frac{1}{\varepsilon}\sum_{j=F(1,\varepsilon)+1}^{\infty}l_j
\end{align}
Because $l_j$ is $d$-dimensional, for a fixed $\delta>0$ we may find $M(\delta)\in\mathbb{N}$ such that for all $j\geq M,$ $(\underline{L}-\delta)j^{-1/d}\leq l_j\leq (\overline{L}+\delta)j^{-1/d}$.
Thus, if $\varepsilon_0=\varepsilon_0(\delta)>0$ is chosen small enough so that $F(1,\varepsilon_0)\geq M,$ one may apply an integral estimate to the tail written in Line \ref{eq:tails} to obtain
\begin{align*}
\frac{1}{\varepsilon}\sum_{j=
F(1,\varepsilon)+1}^{\infty}l_j\leq \left(\frac{\overline{L}+\delta}{\varepsilon}\right)\sum_{j=
F(1,\varepsilon)+1}^{\infty}j^{-1/d}
&\leq \left(\frac{\overline{L}+\delta}{\varepsilon}\right)\int_{F(1,\varepsilon)}^{\infty}x^{-1/d}dx\\&=\left(\frac{\overline{L}+\delta}{\varepsilon}\right)\frac{F(1,\varepsilon)^{1-1/d}}{\frac{1}{d}-1},
\end{align*}
for all $\varepsilon\leq\varepsilon_0.$ Plugging the above into Line \ref{eq:tails}, multiplying through by $\varepsilon^d,$ and using the fact that $F(1,\varepsilon)\geq M$ for all $\varepsilon\leq\varepsilon_0(\delta),$ we see that 
\begin{align*}
    N(\mathcal{L},\varepsilon)\varepsilon^d&\leq  F(1,\varepsilon)\varepsilon^d+\frac{\overline{L}+\delta}{\frac{1}{d}-1}\left(F(1,\varepsilon)\varepsilon^d\right)^{d/(1-d)}\\
    &\leq (\overline{L}+\delta)^d+\frac{d}{1-{d}}(\overline{L}+\delta)(\underline{L}-\delta)^{d-1},
\end{align*}
for all $\varepsilon\leq\varepsilon_0(\delta).$ Therefore, the result follows by first sending $\varepsilon\to 0^+,$ then $\delta\to 0^+.$
\end{proof}
\end{subsection}
\begin{subsection}{Proof of Sharpness Theorem \ref{thm:sharp1}}\label{sec:sharp1}
Next, we discuss why Property I (Minkowski measurability) is necessary for convergence of the first-order packing asymptotics on some cut-out sets. We will prove a sufficient result (Theorem \ref{thm:exact}) that gives the best packing asymptotics along a continuum of subsequences for the example set $S$ of Example \ref{ternary}.
\par To state Theorem \ref{thm:exact} below, we will first need some additional notation. Suppose $S\subset[0,1]$ is the monotonically rearranged 1/3 Cantor set described in Example \ref{ternary}. Let $a\in(1,3]$ be any fixed real number and define a sequence $\varepsilon_{a,n}=a 3^{-n},$ $n\in \mathbb{N},$ $n\geq 1.$ Then for each $k\geq 0$ let
\begin{align*}
p_k:=\lceil a3^{k-1}\rceil.
\end{align*}
Note that $p_0=1$  by the definition of the ceiling function, as $a\in(1,3].$
\begin{theorem}\label{thm:exact} With notation listed as above, let $s=\log2/\log3\in(0,1)$ represent the  Minkowski dimension of $S\subset[0,1].$ 
Then for each $a\in(1,3],$ 
\begin{align}\label{eq:cantorlims}
L(a):=\lim_{n\to\infty}N\left(S,\varepsilon_{a,n}\right)\varepsilon_{a,n}^s=2^{\log_3 a-1}\left(1+\sum_{k=1}^{\infty}\frac{2^{k-1}}{p_k}\right). 
\end{align}
In particular,
\begin{align}\label{eq:cantub}
  L(a)\leq 2^{\log_3 a-1}\left(1+3^{1-\log_3 a}\right), \quad \text{ for all $a\in(1,3],$} \quad \text{ and }\quad L(3)=2.
\end{align}
\end{theorem}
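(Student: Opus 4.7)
The plan is to adapt the upper- and lower-bound techniques from Theorems \ref{thm:upper} and \ref{thm:lower} to the well-ordered gap sequence of $S$, exploiting the fact that for $\varepsilon = \varepsilon_{a,n}$ the quantity $F(k,\varepsilon)$ of Lemma \ref{thm:Lemma} is computable exactly, not merely up to an asymptotic constant. Decompose the gap sequence of $S$ into blocks: \emph{block $0$} consists of the first $2^{n-1}-1$ gaps, all of size at least $3^{-(n-1)} \ge \varepsilon_{a,n}$ (using $a \le 3$); and for $k \ge 1$, \emph{block $k$} consists of the $2^{n+k-2}$ gaps of common size $3^{-(n+k-1)}$, at indices $2^{n+k-2} \le j \le 2^{n+k-1} - 1$. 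Since $m\cdot 3^{-(n+k-1)} \ge a\cdot 3^{-n}$ if and only if $m \ge a\cdot 3^{k-1}$, the integer $p_k = \lceil a\cdot 3^{k-1} \rceil$ is precisely the minimum index-gap required between two $\varepsilon_{a,n}$-separated points lying in block $k$.

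For the lower bound, construct an explicit $\varepsilon_{a,n}$-separated subset $\omega \subset S$ greedily, in the spirit of Theorem \ref{thm:lower}: take all $2^{n-1}$ points $y_1, \ldots, y_{2^{n-1}}$ (automatically $\varepsilon_{a,n}$-separated, since every gap in block $0$ exceeds $\varepsilon_{a,n}$), and within each block $k \ge 1$, keep every $p_k$-th point of $S$, contributing $\lfloor 2^{n+k-2}/p_k \rfloor$ further points. Handle each block transition by discarding at most one point, incurring only $O(n)$ total accounting error across the $O(n)$ blocks that carry a nonzero contribution (block $k$'s contribution drops below unity once $k \gtrsim n\log 2/\log(3/2)$). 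This yields
\[
N(S,\varepsilon_{a,n}) \ge 2^{n-1}\left(1 + \sum_{k=1}^{\infty}\frac{2^{k-1}}{p_k}\right) + O(n),
\]
and multiplying through by $\varepsilon_{a,n}^{s} = a^{s}\cdot 2^{-n} = 2^{\log_3 a - 1}\cdot 2^{1-n}$ delivers the lower half of Line \ref{eq:cantorlims}.

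For the upper bound, any $\omega_N \subset S$ with $\delta(\omega_N) \ge \varepsilon_{a,n}$ contains at most $\lceil 2^{n+k-2}/p_k \rceil + 1$ points whose indices lie in block $k$, by the same separation principle; points straddling consecutive blocks contribute at most $O(1)$ further. The linear-programming scaffold of Theorem \ref{thm:upper} may thus be replaced by direct block-wise counting, producing a matching upper bound with $O(n)$ additive error. Taking $n \to \infty$, where $\varepsilon_{a,n}^{s}\cdot O(n) \to 0$, completes the proof of Line \ref{eq:cantorlims}.

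The corollaries in Line \ref{eq:cantub} follow elementarily: the bound $p_k \ge a\cdot 3^{k-1}$ gives
\[
\sum_{k=1}^{\infty}\frac{2^{k-1}}{p_k} \le \frac{1}{a}\sum_{k=1}^{\infty}\left(\frac{2}{3}\right)^{k-1} = \frac{3}{a} = 3^{1-\log_3 a},
\]
which inserted into the formula for $L(a)$ yields the first inequality; and when $a = 3$ one has $p_k = 3^k$ exactly, forcing the sum to equal $1$ and $L(3) = 2^{0}(1+1) = 2$. The principal obstacle is the block-boundary bookkeeping: one must verify carefully that the greedy construction and the block-wise upper bound agree to within an $O(n)$ additive error across the $O(n)$ active transitions, so that after multiplying by $\varepsilon_{a,n}^{s} = O(2^{-n})$ only the main term $2^{\log_3 a - 1}\bigl(1 + \sum_{k \ge 1} 2^{k-1}/p_k\bigr)$ survives in the limit.
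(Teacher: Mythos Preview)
Your proposal is correct, and the lower bound is essentially the paper's greedy construction. The upper bound, however, is obtained by a genuinely different and more elementary route. The paper reproduces the full linear-programming apparatus of Theorem \ref{thm:upper}: it bounds $F(p,\varepsilon_{a,n})$ by $2^{n+k-1}-1$ on each interval $p_k\le p<p_{k+1}$, sets up a primal LP in the jump frequencies $f_p$, exhibits an explicit feasible primal $\mathbf{f}^*$ and dual $\mathbf{g}^*$, and invokes Strong Duality together with the partial-summation Lemma \ref{combin} to certify optimality. You instead exploit the special feature of $S$ that all gaps within block $k$ have the \emph{same} length $3^{-(n+k-1)}$; this lets you bound the number of packing points in each block directly by $2^{n+k-2}/p_k+O(1)$ and sum, bypassing duality entirely. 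Your argument is shorter and more transparent for this particular set, while the paper's LP approach mirrors the proof of the Main Theorem and would be the natural template for sequences whose gaps are not constant on blocks. One point worth making explicit in your write-up: the $O(n)$ accounting for straddling gaps follows because the packing gaps are disjoint index intervals, so at most one can cross any given block boundary, and there are $O(n)$ boundaries before the tail mass drops below $\varepsilon_{a,n}$.
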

That the claims in Line \ref{eq:cantub} follow from Equation \ref{eq:cantorlims} is immediate from the definition of the ceiling function and the formula for a convergent geometric series. Next, notice that we may optimize the upper bound for $L(a)$ as a function of $\log_3 a.$ Indeed, let $f(x)=2^{x-1}\left(1+3^{1-x}\right)$ such that $L(a)\leq f(\log_3 a)$ for all $a\in(1,3].$ Then $f(x)$ has a unique minimum and maximum over $x\in(0,1]$ at 
$$\underline{x}=\frac{\log\left(\frac{3\log(\frac{3}{2})}{\log2}\right)}{\log3}$$
and $\overline{x}=1,$ respectively. Thus, the values of $\underline{a}=3^{\underline{x}}=3\log(3/2)/\log(2)$ and $\overline{a}=3^{\overline{x}}=3$ provide the optimizing parameters, and we have   
$$\inf_{a\in(1,3]}L(a)\leq L(\underline{a})\leq f(\underline{x})=\frac{\log3}{\log(\frac{3}{2})}\cdot 2^{{\log\left(\frac{3\log(\frac{3}{2})}{\log2}\right)}/{\log3}}\approx 1.93, \quad \text{ while }\max_{a\in(1,3]}L(a)=L(3)=2.$$
These facts imply Sharpness Theorem \ref{thm:sharp1}. Thus, we proceed with the proof of Theorem \ref{thm:exact} below.
\begin{proof}[Proof of Theorem \ref{thm:exact}]
In the proof below, we adapt the basic techniques from the measurable situation to prove convergence of optimal packing along certain subsequences in the non-measurable set $S$. The main idea is to consider the behavior of the distribution function $F(k,\varepsilon_{a,n})$ along the subsequences $\varepsilon_{a,n}=a 3^{-n}$. 
\par We argue that for each $k\geq 0$ and each $p_k\leq p<p_{k+1},$ there holds 
\begin{equation}\label{eq:pkbounds}
    2^{n+k-1}-p_k\leq F(p,a3^{-n})\leq 2^{n+k-1}-1.
\end{equation}
Proceeding with the proof of Line \ref{eq:pkbounds}, let $k\geq 0$ and $p_k\leq p<p_{k+1}.$ Then by the monotonicity of $F(p,\varepsilon_{a,n})$ in its first argument, it suffices to show that 
$$F(p_{k},a3^{-n})\geq 2^{n+k-1}-p_k,$$
and
$$F(p_{k+1}-1,a3^{-n})\leq 2^{n+k-1}-1.$$
To this end, observe that by the definition of the sequence $l_j$ corresponding to $S,$ and the definition of $p_k$ 
$$\sum_{j=2^{n+k-1}-p_k}^{2^{n+k-1}-1}l_j\geq p_k 3^{-(n+k-1)} \geq a3^{k-1}3^{-(n+k-1)}= a3^{-n}.$$ Thus, the lower bound of Line \ref{eq:pkbounds} is proven. Next, using the minimality of $p_{k+1},$ we have that 
$$\sum_{j=2^{n+k-1}}^{2^{n+k-1}+p_{k+1}-2}l_j\leq 3^{-(n+k)}\left(p_{k+1}-1\right)<a3^{-n}.$$
Therefore, the upper bound of Line \ref{eq:pkbounds} holds, and the estimate is true for all $p$ in the range. 
\par We now turn to the objective of \textit{using} the bounds provided in Line \ref{eq:pkbounds} to control the set of adjacent distances in a packing of radius at least $\varepsilon_{a,n}$. To this end, suppose that $\omega_{P(\varepsilon_{a,n})}\subset S$ is a collection of points such that $\delta(\omega_{P(\varepsilon_{a,n})})\geq \varepsilon_{a,n}.$ Just as in Theorem \ref{thm:upper}, order the elements of $\omega_{P(\varepsilon_{a,n})}$ by
$0\leq x_1<x_2<\cdots<x_{P(\varepsilon_{a,n})}\leq1.$ Similarly, there is the sequence of adjacent distances $x_{n+1}-x_n=p(j_n+1,k_n),$ where $x_n=y_{j_n},$ ($j_n$ provided by Proposition \ref{thm:simpchar}). Thus, we may study the frequencies $f(k,\omega_{P(\varepsilon_{a,n})})$ defined as earlier on, for $k\geq 1.$ Let
$$I(\varepsilon_{a,n}):=\min\left\{i\geq 0\text{ : } \sum_{j=p_{i}}^{\infty}l_j\leq \varepsilon_{a,n}\right\},$$ and notice that $I(\varepsilon_{a,n})\to\infty$ as $n\to\infty$ by  the definition of $p_k$ and the fact that $\varepsilon_{a,n}\to 0$ as $n\to\infty.$ From the consideration of the largest point of $\omega_{P(\varepsilon_{a,n})},$ there holds
$$\#\left(\{\omega_{P(\varepsilon_{a,n})}\}\cap \left(\{y_{j_n}\text{ : } j_n\geq p_{I(\varepsilon_{a,n})}\}\cup \{1\}\right)\right)\leq 2;$$ thus, it follows that 
$$P(\varepsilon_{a,n})-3\leq \sum_{p=1}^{p_{I(\varepsilon_{a,n})}}f(p,\omega_{P(\varepsilon_{a,n})})$$
by the same reasoning of Theorem \ref{thm:upper}. The proof of the asymptotic upper bound proceeds as follows: Using the upper bound of Line \ref{eq:pkbounds}, and recycling the reasoning of Theorem \ref{thm:upper}, one obtains that for each $k\geq0$ and each $p_k\leq p<p_{k+1},$
$$\sum_{j=1}^{p}jf(j,\omega_{P(\varepsilon_{a,n})})\leq F(p,a3^{-n})+p-1\leq 2^{n+k-1}+p-1.$$ 
Similarly to our prior arguments, for each $1\leq p\leq p_{I(\varepsilon_{a,n})},$ let $f_p:=f(p,\omega_{P(\varepsilon_{a,n})}).$ We provide an upper bound for $N(S,\varepsilon_{a,n})-3$ by solving the following linear programming  problem:
\par
Maximize the linear form
$$\sum_{p=1}^{p_{I(\varepsilon_{a,n})}}f_p$$
over the non-negative reals subject to the constraints that 
\begin{equation}\label{eq:constraints}
\begin{split}
\sum_{j=1}^{p}jf_j&\leq {2^{n+k-1}+p-1}, \quad \text{ for each $p_k\leq p<p_{k+1},$ where $0\leq k\leq I(\varepsilon_{a,n})-1$},\text{ and }\\
\quad \sum_{j=1}^{p_{I(\varepsilon_{a,n})}}jf_j&\leq 2^{n+I(\varepsilon_{a,n})-1}+p_{I(\varepsilon_{a,n})}-1
\end{split}
\end{equation}
The analogies with the proof of Theorem \ref{thm:upper} should contribute to the reader's intuition for the following: We claim that 
$
\mathbf{f^*}:=\left(f^*_1,\ldots,f^*_{p_{I(\varepsilon_{a,n})}}\right)$ defined by 
\begin{align*}
f^*_p:=
    \begin{dcases}
        2^{n-1}, & \text{ if } p=1\\
        \frac{2^{n+k-2}+p_{k}-p_{k-1}}{p_{k}}, & \text{ if } p=p_{k}, \text{ for some } 1\leq k\leq I(\varepsilon_{a,n})\\
        0, & \text{otherwise}\\
    \end{dcases}
\end{align*}
is optimal for the aforementioned linear programming  problem. Notice that $1f_1^*=f_1^*=2^{n-1},$ and for each $k\geq 1$
\begin{align*}
    \sum_{j=1}^{p_k}jf_j^*&=\sum_{m=0}^{k}p_mf_{p_m}^*=2^{n-1}+\sum_{m=1}^{k}p_m\frac{2^{n+m-2}+p_m-p_{m-1}}{p_m}\\
    &=\sum_{m=1}^{k}(2^{n+m-2}+p_m-p_{m-1})\\
    &=2^{n-1}+\sum_{m=1}^{k}2^{n+m-2}+\sum_{m=1}^{k}(p_m-p_{m-1})\\
    &=2^{n-1}+2^{n-1}\left(2^{k}-1\right)+p_k-p_0=2^{n+k-1}+p_k-1.
\end{align*}
Thus the constraints in Line \ref{eq:constraints} are satisfied and $\mathbf{f^*}$ is feasible. Define a pair of infinite sequences $(a_k)_{k=1}^{\infty},$ and $(b_k)_{k=0}^{\infty}$ by
\begin{align}
\label{eq:ak} a_k&:=\frac{1}{p_{k-1}}, \quad \text{ for all } k\geq 1,\\
\label{eq:bk} b_k&:=
    \begin{dcases}
        0, & \text{ if } k=0\\
        2^{n+k-2}+p_{k-1}-1, & \text{ if } k\geq 1\\
    \end{dcases}    
\end{align}
Then, if $s_k:=a_k\left(b_k-b_{k-1}\right),$ for $1\leq k\leq I(\varepsilon_{a,n})+1$ as in Lemma \ref{combin}, we find
\begin{align*}\sum_{k=1}^{I(\varepsilon_{a,n})+1}s_k&=\sum_{k=1}^{I(\varepsilon_{a,n})+1}a_k(b_k-b_{k-1})\\&=a_1b_1+\sum_{k=2}^{I(\varepsilon_{a,n})+1}\frac{1}{p_{k-1}}\left(2^{n+k-2}+p_{k-1}-1-\left(2^{n+k-3}+p_{k-2}-1\right)\right)\\
&=2^{n-1}+\sum_{k=2}^{I(\varepsilon_{a,n})+1}\frac{2^{n+k-3}+p_{k-1}-p_{k-2}}{p_{k-1}}=2^{n-1}+\sum_{k=1}^{I(\varepsilon_{a,n})}\frac{2^{n+k-2}+p_{k}-p_{k-1}}{p_k}\\&=\sum_{k=0}^{I(\varepsilon_{a,n})}f_{p_k}^*=\sum_{p=1}^{{p_{I(\varepsilon_{a,n})}}}f_p^*.
\end{align*}
Again, we seek to apply partial summation Lemma \ref{combin} along with Strong Duality Theorem \ref{thm:dual}. The dual problem to the constrained maximization of $\sum_{p=1}^{p_{I(\varepsilon_{a,n})}}f_p$ is to minimize the linear form 
$$\left(2^{n+I(\varepsilon_{a,n})-1}+p_{I(\varepsilon_{a,n})}-1\right)g_{p_{I(\varepsilon_{a,n})}}+\sum_{k=0}^{{I(\varepsilon_{a,n})-1}}\sum_{p=p_k}^{p_{k+1}-1}(2^{n+k-1}+p-1) g_p,$$
among all $\mathbf{g}=\left(g_1,\ldots,g_{p_{I(\varepsilon_{a,n})}}\right)^T$ with non-negative real entries, subject to the linear constraints requiring that for each $1\leq m\leq p_{I(\varepsilon_{a,n})},$
\begin{align*}
    \sum_{p=m}^{p_{I(\varepsilon_{a,n})}}m g_p\geq 1.
\end{align*}
Using the same procedure following Line \ref{eq:data} of Theorem \ref{thm:upper}, we construct the feasible dual program $\mathbf{g^*}:=\left(g_1^*,\ldots,g_{p_{I(\varepsilon_{a,n})}}^*\right)^T$ with entries
\begin{align*}
g^*_p:=
    \begin{dcases}
        \frac{1}{p_{I(\varepsilon_{a,n})}}, & \text{ if } p=p_{I(\varepsilon_{a,n})}\\
        \frac{1}{p_{k-1}}-\frac{1}{p_{k}}, & \text{ if } p=p_{k-1}, \text{ for some } 1\leq k\leq I(\varepsilon_{a,n})\\
        0, & \text{otherwise}.\\
    \end{dcases}
\end{align*}
Let $t_k:=b_k(a_k-a_{k+1}),$ for $1\leq k\leq I(\varepsilon_{a,n}),$ $t_{I(\varepsilon_{a,n})+1}:=a_{I(\varepsilon_{a,n})+1}b_{I(\varepsilon_{a,n})+1}$ and suppose $a_k,b_k$ are given as in Lines \ref{eq:ak} and \ref{eq:bk}, respectively. Then
\begin{align*}
&\left(2^{n+I(\varepsilon_{a,n})-1}+p_{I(\varepsilon_{a,n})}-1\right)g_{p_{I(\varepsilon_{a,n})}}^*+\sum_{k=0}^{{I(\varepsilon_{a,n})-1}}\sum_{p=p_k}^{p_{k+1}-1}(2^{n+k-1}+p-1) g_p
^*\\
&=\left(2^{n+I(\varepsilon_{a,n})-1}+p_{I(\varepsilon_{a,n})}-1\right)g_{p_{I(\varepsilon_{a,n})}}^*+\sum_{k=1}^{I(\varepsilon_{a,n})}(2^{n+k-2}+p_{k-1}-1)g_{p_{k-1}}^*\\ 
&=\left(2^{n+I(\varepsilon_{a,n})-1}+p_{I(\varepsilon_{a,n})}-1\right)\frac{1}{p_{I(\varepsilon_{a,n})}}+\sum_{k=1}^{I(\varepsilon_{a,n})}(2^{n+k-2}+p_{k-1}-1)\left(\frac{1}{p_{k-1}}-\frac{1}{p_k}\right)\\
&=a_{I(\varepsilon_{a,n})+1}b_{I(\varepsilon_{a,n})+1}+\sum_{k=1}^{I(\varepsilon_{a,n})}b_k(a_k-a_{k+1})=\sum_{k=1}^{I(\varepsilon_{a,n})+1}t_k,
\end{align*}
Thus, by Lemma \ref{combin}, we have
\begin{align*}
\sum_{p=1}^{{p_{I(\varepsilon_{a,n})}}}f_p^*=\sum_{k=1}^{I(\varepsilon_{a,n})+1}s_k&=\sum_{k=1}^{I(\varepsilon_{a,n})+1}t_k\\
&=\left(2^{n+I(\varepsilon_{a,n})-1}+p_{I(\varepsilon_{a,n})}-1\right)g_{p_{I(\varepsilon_{a,n})}}^*+\sum_{k=0}^{{I(\varepsilon_{a,n})-1}}\sum_{p=p_k}^{p_{k+1}-1}(2^{n+k-1}+p-1) g_p^*.
\end{align*}
Therefore $\mathbf{f^*}$ and $\mathbf{g^*}$ are optimal for the primal problem and its dual, by Theorem \ref{thm:dual}. Thus,
\begin{align*}
  N(S,\varepsilon_{a,n})-3\leq \sum_{p=1}^{p_{I(\varepsilon_{a,n})}}f_p^*&=2^{n-1}+\sum_{k=1}^{I(\varepsilon_{a,n})}\frac{2^{n+k-2}+p_{k}-p_{k-1}}{p_k}\\&=2^{n-1}\left(1+\sum_{k=1}^{I(\varepsilon_{a,n})}\frac{2^{k-1}}{p_k}\right)+\sum_{k=1}^{I(\varepsilon_{a,n})}\frac{p_{k}-p_{k-1}}{p_k}\\
  &\leq 2^{n-1}\left(1+\sum_{k=1}^{\infty}\frac{2^{k-1}}{p_k}\right)+\sum_{k=1}^{I(\varepsilon_{a,n})}\frac{\lceil a3^{{k-1}}\rceil-\lceil a 3^{k-2}\rceil }{\lceil  a 3^{k-1}\rceil}&\\
  &\leq 2^{n-1}\left(1+\sum_{k=1}^{\infty}\frac{2^{k-1}}{p_k}\right)+\sum_{k=1}^{I(\varepsilon_{a,n})}\frac{a3^{{k-1}}+1-a 3^{k-2} }{a 3^{k-1}}\\
  &\leq 2^{n-1}\left(1+\sum_{k=1}^{\infty}\frac{2^{k-1}}{p_k}\right)+\frac{1}{a}\sum_{k=1}^{\infty}\frac{1}{3^{k-1}}+\sum_{k=1}^{I(\varepsilon_{a,n})}\frac{2}{3}\\
  &=2^{n-1}\left(1+\sum_{k=1}^{\infty}\frac{2^{k-1}}{p_k}\right)+\frac{3}{2a}+\frac{2}{3}I(\varepsilon_{a,n}),
\end{align*}
where we used the formula for the sum of a geometric series with common ratio $r=1/3$, and the fact that $x\leq \lceil x\rceil\leq x+1 $ for any non-negative real number $x.$ By the definitions of $I(\varepsilon_{a,n}),$ and $\ell_j,$ there holds
\begin{align*}
a3^{-n}=\varepsilon_{a,n}<\sum_{j=p_{I(\varepsilon_{a,n})-1}}^{\infty}\ell_j=\sum_{j=\lceil a 3^{I(\varepsilon_{a,n})-2}\rceil}^{\infty}\ell_j\leq\sum_{3^{I(\varepsilon_{a,n})-3}}^{\infty}\ell_j\leq \sum_{j=2^{I^*}}^{\infty}\ell_j
&=\sum_{k=I^*}^{\infty}\sum_{j=2^{k}}^{2^{k+1}-1}3^{-k-1}\\
&=\sum_{k=I^*}^{\infty}2^k3^{-k-1}\\
&=\frac{1}{3}\cdot\frac{(2/3)^{I^*}}{1-\frac{2}{3}}=\left(\frac{2}{3}\right)^{I^*},
\end{align*}
where we have assigned $I^*:=\lfloor \frac{\log3}{\log2}\left(I(\varepsilon_{a,n})-3\right)\rfloor.$ By taking logarithms on the left and right hand-side of the above, we find that 
$$\log\left(\frac{3}{2}\right)I^{*}<n\log(3)-\log(a),$$
and hence
$$I(\varepsilon_{a,n})<n\frac{\log2}{\log(3/2)}+c_a,$$
where $c_a$ is a constant depending on the value of $a.$
Thus, 
$$\varepsilon_{a,n}^s\left(\frac{3}{2a}+\frac{2I(\varepsilon_{a,n})}{3}+c_a\right)<{3a^{\frac{\log2}{\log3}-1}}\cdot\frac{1}{2^{n+1}}+\frac{\log(2)a^\frac{\log2}{\log3}}{3\log(3/2)}\cdot \frac{n}{2^{n-1}}+\frac{c_a a^\frac{\log2}{\log3}}{3}\cdot\frac{1}{2^{n-1}}.$$
Limiting as $n\to\infty,$ we finally obtain
\begin{align*}
\limsup_{n\to\infty}N(S,\varepsilon_{a,n})\varepsilon_{a,n}^s&=\limsup_{n\to\infty}N(S,a3^{-n})(a3^{-n})^s\\
&\leq \limsup_{n\to\infty}(a3^{-n})^s2^{n-1}\left(1+\sum_{k=1}^{\infty}\frac{2^{k-1}}{p_k}\right)\\&+\limsup_{n\to\infty}\left({3a^{\frac{\log2}{\log3}-1}}\cdot\frac{1}{2^{n+1}}+\frac{\log(2)a^\frac{\log2}{\log3}}{3\log(3/2)}\cdot \frac{n}{2^{n-1}}+\frac{c_a a^\frac{\log2}{\log3}}{3}\cdot\frac{1}{2^{n-1}}\right)\\
&=(3^{\log_3a})^s 2^{-1}\left(1+\sum_{k=1}^{\infty}\frac{2^{k-1}}{p_k}\right)=2^{\log_3a-1}\left(1+\sum_{k=1}^{\infty}\frac{2^{k-1}}{p_k}\right),
\end{align*}
so that half of the theorem is proved. 
\par We shall now proceed to the lower bound. Similarly to the proof of Theorem \ref{thm:lower}, we accomplish this by explicitly constructing a packing that saturates the limit supremum as $n\to\infty.$ Quite similarly to the lower bound of Line \ref{eq:part1} in Theorem  \ref{thm:Lemma}, let $J(n)$ be an integer-valued sequence such that $J(n)\to\infty,$ and $J(n)=o(n),$
as $n\to\infty.$ Observe that for all $0\leq k \leq J(n),$ and all $p_k\leq p<p_{k+1},$
$$F(p,a3^{-n})\geq 2^{n+k-1}-p_k\geq 2^{n+k-1}\left(1-\left(\frac{3}{2}\right)^{k-1}\frac{3}{2^n}\right)=2^{n+k-1}(1+o(1)),\quad \text{ as $n\to\infty$}.$$ 
Thus, for a fixed $0<\delta<1,$ there exists $n_0=n_0(\delta)\in\mathbb{N}$ such that for all $n\geq n_0,$ $0\leq k\leq G(n):=\min\left(J(n),I(\varepsilon_{a,n})\right),$ and $p_k\leq p <p_{k+1},$ there holds
$$F(p,a3^{-n})\geq 2^{n+k-1}\left(1-\delta\right).$$
Define $h_p$ over the range $1\leq p\leq p_{G(n)}$ by 
\begin{align*}
h_p:=
    \begin{dcases}
        \lfloor (1-\delta)2^{n-1}\rfloor, & \text{ if } p=1\\
        \big\lfloor\frac{(1-\delta)2^{n+k-2}}{p_k}\big\rfloor, & \text{ if } p=p_{k}, \text{ for some } 1\leq k\leq G(n)\\
        0, & \text{otherwise}.\\
    \end{dcases}
\end{align*}
Then, a familiar telescoping sum gives
$$\sum_{j=1}^{p}jh_j\leq (1-\delta)\left(2^{n-1}+\sum_{k=1}^{G(n)}p_k \frac{2^{n+k-2}}{p_k}\right)\leq (1-\delta)2^{n+k-1}\leq F(p,a3^{-n})\leq F(p,a3^{-n})+p-1,$$
for each such $p.$ Next, for each $1\leq p \leq {p_{G(n)}},$ set 
$H(p):=\sum_{j=1}^{p}jh_j,$ 
and notice that 
\begin{align}\label{eq:H}  H({p_{G(n)}})=p_0h_{p_0}+p_1h_{p_1}+\cdots+p_{G(n)}h_{p_{G(n)}},
\end{align}
because $h_p=0,$ if $p\notin\{p_k: 0\leq k\leq G(n)\}.$
We are now well-positioned to construct greedy configurations with large separations over $S$. To this end, define
$$\omega_1:=\left\{\sum_{j=1}^{n+1}l_j: \text{ } 0\leq n\leq h_{0}-1\right\}\cup\{0\},$$
and
$$\omega_{h_{p_k}}:=\left\{\sum_{j=1}^{H(p_{k-1})+np_k}l_j:\text{ }1\leq n \leq h_{p_k}\right\}\subset S,$$
for each $1\leq k\leq G(n).$ Then if $$\omega_{R(\varepsilon_{a,n})}:=\bigcup_{k=0}^{G(n)}\omega_{h_{p_k}}$$
we obtain that $\delta(\omega_{R(\varepsilon_{a,n})})\geq \varepsilon_{a,n}$ by the construction, 
and $R(\varepsilon_{a,n})=\#\omega_{R(\varepsilon_{a,n})}=1+H(G(n)).$ Counting the non-zero contributions to Equation \ref{eq:H} and using the definition of $h_p,$ we obtain
\begin{align*}    H(G(n))\geq\sum_{p=1}^{p_{G(n)}}h_p&\geq(1-\delta)2^{n-1}-1+\sum_{k=1}^{G(n)}\left((1-\delta)\frac{2^{n+k-2}}{p_k}-1\right)\\
    &\geq(1-\delta)2^{n-1}\left(1+\sum_{k=1}^{G(n)}\frac{2^{k-1}}{p_k}\right)-\left(G(n)+1\right).
\end{align*}
Thus, by the maximality of $N(S,\varepsilon_{a,n}),$ and the growth restriction on $G(n),$ we have
$$
N(S,\varepsilon_{a,n})\geq(1-\delta)2^{n-1}\left(1+\sum_{k=1}^{G(n)}\frac{2^{k-1}}{p_k}\right)+o\left(n\right)\quad\text{ as }\quad n\to\infty, 
$$
from which it follows that
$$N(S,\varepsilon_{a,n})\varepsilon_{a,n}^s\geq(1-\delta)2^{\log_3a-1}\left(1+\sum_{k=1}^{G(n)}\frac{2^{k-1}}{p_k}\right)+o(1)\quad\text{ as }\quad n\to\infty.$$
Finally, because $G(n)\to\infty\text{ as } n\to\infty,$ the result follows by sending $n\to\infty,$ then $\delta\to0^+$. This concludes the proof.
\end{proof}
\end{subsection}
\begin{subsection}{Proof of Sharpness Theorem \ref{thm:sharp2}}\label{sec:sharp2} In contrast to the universal packing constant $C_p$ of Theorem \ref{thm:full} that is valid for all compact sets of positive Lebesgue measure, the packing constant $p_d$ of Theorem \ref{thm:main} is \textit{not} universal among Minkowski measurable sets of dimension $d$ with convergent first-order packing asymptotics. 
\par We prove this in three steps. First, we upper bound the packing constant for the $(1/2,1/3)$ Cantor set $T$ described in Example \ref{ex:sharp2} using the renewal theoretic approach of Lalley (Theorem 1.1 of \cite{Lalley}) and further elementary arguments. We will need specific mechanics of his proof; hence, we give an outline of the proof of Theorem 1.1 in Theorem \ref{thm:fractal}. After this, we compute $\mathcal{M}_t(T)$ using a formula for the content of an independent self-similar fractal
subset of $\mathbb{R}$ due to Falconer. Finally, we plug the formula for the Minkowski content into Main Theorem \ref{thm:main} and compare the quantities obtained in steps one and two. 
Theorem 1.1 specifically uses the deep  \textit{Continuous Renewal Theorem} from probability theory. In the upcoming paragraphs, we will thus provide an account of the relevant terminology to state the Continuous Renewal Theorem (Theorem \ref{thm:renew}) before using the result as a "black box" in the proof of Theorem \ref{thm:fractal} (as was done in Theorem 1.1.)
\begin{definition}[Arithmetic and non-arithmetic measures]
A Borel probability measure $\mu$ on $\mathbb{R}$ with support ($\text{supp}\mu$) contained in an integer lattice $h\mathbb{Z},$ for some $h>0,$ is \textit{arithmetic}. Otherwise, the measure is \textit{non-arithmetic}.
\end{definition}
In Chapter XI of \cite{Feller_1966}, Feller showed that the following strong notion of integrability is suitable for the identities of renewal theory.
\begin{definition}[Direct Riemann integrability, renewal equation]\label{def:arithrenew}
A non-negative, bounded function $z:\mathbb{R}\to\mathbb{R}$ is  \textit{directly Riemann integrable} if its upper and lower Riemann sums \textit{along the entire real line} converge to the same finite limit as the mesh of the partition defining the sums vanishes; i.e. if
$$\lim_{\delta\to0^+}\sum_{n\in\mathbb{Z}}\delta\underline{z}_n(\delta)=\lim_{\delta\to0^+}\sum_{n\in\mathbb{Z}}\delta\overline{z}_n(\delta)\in[0,\infty),$$
where 
$$\underline{z}_n(\delta)=\inf_{x\in[(n-1)\delta,n\delta)}z(x),\quad \text{ and }\quad \overline{z}_n(\delta)=\sup_{x\in[(n-1)\delta,n\delta)}z(x).$$
Direct Riemann integrability implies Riemann integrability in the ordinary limiting sense, so in this case  $$\lim_{a\to\infty}\int_{0}^{a}z(x)dx=\int_{0}^{\infty}z(x)dx\in[0,\infty).$$
Given a directly Riemann integrable $z(a),$
we say a function $Z(a)$ satisfies the \textit{renewal equation} for $z(a)$ if  
\begin{align}\label{eq:renewid}
    Z(a)=z(a)+\int_{0}^{a}Z(a-x)d\mu(x), a>0.
\end{align} 
\end{definition}
\begin{theorem}[Continuous Renewal Theorem, Chapter 11 of \cite{Feller_1966}]\label{thm:renew} Suppose $\mu$ is a non-arithmetic Borel probability measure with $\textit{supp}\mu\subset [0,\infty),$ and finite expectation
$$\overline{\mu}=\int_{0}^{\infty}xd\mu(x)<\infty.$$
If $z(x)$ is directly Riemann integrable, and $Z(a)$ satisfies Equation \ref{eq:renewid} for each $a\in[0,\infty),$ then
\begin{align*}
   \lim_{a\to\infty} Z(a)=\frac{1}{{\overline{\mu}}}\int_{0}^{\infty}z(x)dx\in[0,\infty). 
\end{align*}
\end{theorem}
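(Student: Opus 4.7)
The plan is to solve the renewal equation $Z(a) = z(a) + \int_0^a Z(a-x)\,d\mu(x)$ explicitly via the renewal measure and then invoke Blackwell's renewal theorem as the key probabilistic input. Introducing the renewal measure $U := \sum_{n=0}^{\infty} \mu^{*n}$ on $[0,\infty)$, where $\mu^{*0} = \delta_0$ is the unit mass at the origin and $\mu^{*n}$ denotes the $n$-fold convolution of $\mu$, one obtains the unique locally bounded solution in convolution form
$$Z(a) = (U * z)(a) = \int_{0}^{a} z(a-x)\,dU(x).$$
Uniqueness follows by iterating the equation and observing that $\mu^{*n}$ puts vanishing mass on any fixed bounded interval (since $\mu$ has positive mean), while existence follows by direct substitution using Fubini's theorem.

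The core analytic task reduces to computing $\lim_{a \to \infty}(U * z)(a)$. I would invoke Blackwell's renewal theorem, which states that for any fixed $h > 0$,
$$U([a, a+h]) \longrightarrow \frac{h}{\overline{\mu}} \quad \text{as } a \to \infty,$$
under the hypotheses that $\mu$ is non-arithmetic and has finite mean $\overline{\mu}$. Blackwell's theorem itself admits either a coupling proof (pairing two independent delayed renewal sequences and exploiting the non-arithmetic hypothesis to couple them in finite expected time) or a Fourier-analytic proof using decay estimates for $1/(1 - \hat{\mu}(\xi))$ away from $\xi = 0$. For $z$ equal to an indicator $\mathbf{1}_{[b,c]}$, the conclusion of Theorem \ref{thm:renew} follows immediately, since $Z(a) = U([a-c, a-b])$ whenever $a > c$, and Blackwell's limit gives $(c-b)/\overline{\mu} = \overline{\mu}^{-1}\int_0^{\infty} \mathbf{1}_{[b,c]}(x)\,dx$.

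The passage from indicators to a general directly Riemann integrable $z$ is a sandwich argument using Definition \ref{def:arithrenew}. For each mesh $\delta > 0$, the lower and upper Riemann step functions built from $\underline{z}_n(\delta)$ and $\overline{z}_n(\delta)$ are countable sums of indicator rectangles, and direct Riemann integrability ensures that both corresponding Riemann sums are finite. By linearity and the indicator case, one sandwiches
$$\frac{1}{\overline{\mu}}\sum_{n \in \mathbb{Z}} \delta\,\underline{z}_n(\delta) \;\leq\; \liminf_{a \to \infty} Z(a) \;\leq\; \limsup_{a \to \infty} Z(a) \;\leq\; \frac{1}{\overline{\mu}}\sum_{n \in \mathbb{Z}} \delta\,\overline{z}_n(\delta).$$
Letting $\delta \to 0^+$, the defining property of direct Riemann integrability forces both bounding sums to the common value $\overline{\mu}^{-1}\int_0^{\infty} z(x)\,dx$, yielding the result.

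The main obstacle is Blackwell's renewal theorem itself: the non-arithmetic hypothesis is essential, since an arithmetic $\mu$ concentrated on $h\mathbb{Z}$ produces oscillations in $U([a, a+h])$ that preclude any limit. A related subtlety is that ordinary Riemann integrability of $z$ over $[0,\infty)$ is insufficient to close the sandwich; direct Riemann integrability is precisely the condition allowing one to interchange the mesh limit $\delta \to 0^+$ with the spatial limit $a \to \infty$, controlling uniformly both the discretization error on each step and the contribution of the tail of $z$, where one exploits that $U$ has uniformly bounded mass on intervals of fixed length as a consequence of the finite-mean hypothesis.
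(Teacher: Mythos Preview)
Your proof sketch is correct and follows the standard route to the Key Renewal Theorem: represent the solution as $Z = U * z$ for the renewal measure $U = \sum_{n \geq 0} \mu^{*n}$, invoke Blackwell's theorem for indicator inputs, and then pass to general directly Riemann integrable $z$ by a Riemann-sum sandwich. This is essentially the argument given in Feller.

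However, the paper does not prove this theorem at all. It is stated with attribution to Feller's Chapter~11 and explicitly used as a ``black box'' in the proof of Theorem~\ref{thm:fractal}; the surrounding text says as much. So there is no paper proof to compare against. Your proposal supplies precisely the argument the paper is citing, which is fine as an exposition, but you should be aware that for the purposes of this manuscript the result is treated as background and not something requiring independent verification.
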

\par The first author and Reznikov recently used a discrete version of this tool to prove convergence of minimal hypersingular Riesz $s$-energy along explicit subsequences of the naturals (see \cite{Anderson_Reznikov_2023} Theorem 4.1). Together with Vlasiuk and White, they proved that the continuous version yields convergence of maximal Riesz hypersingular $s$-polarization on independent self-similar fractals (\cite{ARVW2022} Theorem 15). Lalley further demonstrates the applicability of renewal theory to some geometric optimization and counting problems in \cite{Lalley_1989} and \cite{Lalley_1989b}. 
\begin{theorem}[Lalley \cite{Lalley} Theorem 1.1]\label{thm:fractal} Suppose $A\subset\mathbb{R}^p$ is an independent self-similar fractal of Minkowski dimension $d$. Then $\mathcal{N}_d(A)$ exists and is positive and finite.
\end{theorem}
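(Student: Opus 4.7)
The plan is to reduce the packing problem on $A$ to a renewal equation driven by the contraction ratios $r_1,\ldots,r_M,$ then invoke Theorem \ref{thm:renew}. Concretely, by the disjointness assumption in Definition \ref{def:self-similar}, there exists $\varepsilon^{*}>0$ such that whenever $\varepsilon<\varepsilon^{*}$ any $\varepsilon$-separated configuration in $A$ decomposes into $\varepsilon$-separated sub-configurations in the pieces $\psi_1(A),\ldots,\psi_M(A).$ Self-similarity gives $N(\psi_i(A),\varepsilon)=N(A,\varepsilon/r_i),$ so one obtains the approximate identity
\begin{equation*}
N(A,\varepsilon)=\sum_{i=1}^{M}N(A,\varepsilon/r_i)+E(\varepsilon),
\end{equation*}
where the error $E(\varepsilon)$ records the (at most $2M$ or so) boundary points near the inter-piece gaps $b_1,\ldots,b_{M-1}$ and vanishes identically for $\varepsilon<\varepsilon^{*}.$

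Next I would perform the standard renewal-theoretic substitution. Set $t=\log(1/\varepsilon),$ let $t_i:=\log(1/r_i)>0,$ and define
\begin{equation*}
f(t):=e^{-td}N(A,e^{-t}),\qquad z(t):=e^{-td}E(e^{-t}).
\end{equation*}
Multiplying the approximate identity above by $e^{-td}$ and using Moran's equation \eqref{eq:dimensionssf} yields
\begin{equation*}
f(t)=z(t)+\int_{0}^{t}f(t-x)\,d\mu(x),\qquad \mu:=\sum_{i=1}^{M}r_i^{d}\delta_{t_i}.
\end{equation*}
Because $\sum_i r_i^d=1,$ $\mu$ is a Borel probability measure supported on $[0,\infty)$ with finite expectation $\overline{\mu}=\sum_i r_i^d\log(1/r_i).$ By Definition \ref{def:indep}, independence of $A$ is exactly the statement that the group generated by $\{t_1,\ldots,t_M\}$ is dense in $\mathbb{R}$, i.e.\ that $\mu$ is non-arithmetic in the sense of Definition \ref{def:arithrenew}. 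Proposition \ref{thm:growth} and Corollary \ref{cor:growth} applied to the self-similar (hence $d$-Minkowski-dimensional) set $A$ give uniform bounds $0<c_1\le f(t)\le c_2<\infty$ for all sufficiently large $t,$ so $f$ is bounded.

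With the renewal equation in hand, the verification of the hypotheses of Theorem \ref{thm:renew} is what I would do next, and this is where the main technical obstacle lies. One must show that $z(t)$ is directly Riemann integrable. Boundedness of $z$ follows from the bound on $f$ together with the estimate $E(\varepsilon)=O(1)$ coming from the finitely many boundary effects, and compact support (in fact $z$ vanishes for $t>\log(1/\varepsilon^{*})$) makes the tail control trivial; continuity almost everywhere follows from the piecewise structure of $N(A,\cdot).$ The delicate part is making the decomposition precise enough that $E(\varepsilon)$ is of controlled size across all scales $\varepsilon\in(0,\text{diam}(A)],$ which requires carefully partitioning the configuration by which piece $\psi_{i_1}\circ\cdots\circ\psi_{i_k}(A)$ each point belongs to at the appropriate depth $k=k(\varepsilon)$ and bounding the number of points that straddle multiple pieces.

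Once direct Riemann integrability of $z$ is established, Theorem \ref{thm:renew} yields
\begin{equation*}
\mathcal{N}_d(A)=\lim_{\varepsilon\to 0^+}\varepsilon^{d}N(A,\varepsilon)=\lim_{t\to\infty}f(t)=\frac{1}{\overline{\mu}}\int_{0}^{\infty}z(x)\,dx,
\end{equation*}
proving existence of the packing constant. Finiteness is immediate from the bound $f(t)\le c_2,$ and positivity follows from the lower bound $f(t)\ge c_1>0$ provided by Corollary \ref{cor:growth} (alternatively, one can see the limit is positive directly since $z\ge 0$ and $z\not\equiv 0$ whenever $A$ is infinite). This completes the plan.
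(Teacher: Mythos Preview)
Your approach is essentially identical to the paper's: decompose $N(A,\varepsilon)$ via self-similarity into $\sum_i N(A,\varepsilon/r_i)$ plus a compactly supported error, rescale to a renewal equation driven by the non-arithmetic probability measure $\mu=\sum_i r_i^d\delta_{\log(1/r_i)}$, and invoke Theorem~\ref{thm:renew}, with positivity coming from Proposition~\ref{thm:growth}. Your worry about a ``delicate'' depth-$k$ partitioning is unnecessary: because the first-level pieces $\psi_i(A)$ are already a positive distance $\delta$ apart, the identity $N(A,\varepsilon)=\sum_i N(A,\varepsilon/r_i)$ holds \emph{exactly} for all $\varepsilon\le\delta$, so $E(\varepsilon)$ is supported on the bounded interval $[\delta,\mathrm{diam}(A)]$ where $N(A,\cdot)$ is an integer-valued step function with finitely many jumps---hence $z$ is bounded, piecewise continuous, and compactly supported, and direct Riemann integrability is immediate without any multi-scale bookkeeping.
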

\begin{proof}
Suppose $A$ has contractions $\{r_i\}_{i=1}^{M}$ and similitudes $\{\psi_i\}_{i=1}^{M}.$ Then because the leaves $\psi_1(A),\ldots, \psi_M(A),$ are pair-wise disjoint compact sets, the sets are \textit{metrically separated}; i.e. there exists $\delta>0$ such that $|x-x'|\geq\delta$ for each $x\in\psi_i(A), x'\in\psi_j(A),$ $i\neq j.$ Now, suppose $0<\varepsilon\leq\delta$ and let $N\left(\psi_i(A),\varepsilon \right)$ represent the maximal cardinality of an $\varepsilon$-separated subset of $\psi_i(A).$ If each such subset is given by $\omega_{N\left(\psi_i(A),\varepsilon \right)},$ then by the similarity between $A$ and $\psi_i(A),$ it follows that 
$$N(A,\varepsilon r_i^{-1})=N(\psi_i(A),\varepsilon).$$ Thus
$$\omega_{P(\varepsilon)}:=\bigcup_{i=1}^{M}\omega_{N\left(\psi_i(A),\varepsilon \right)}$$
is an $\varepsilon$-separated subset of $A$ of cardinality $P(\varepsilon)=\sum_{i=1}^{M}N\left(\psi_i(A),\varepsilon \right)=\sum_{i=1}^{M}N(A,\varepsilon r_i^{-1}).$ 
Hence $N(A,\varepsilon)\geq \sum_{i=1}^{M}N(A,\varepsilon r_i^{-1}).$ On the other hand, if $\omega_{Q(\varepsilon)}\subset A$ is any configuration of size greater than $\sum_{i=1}^{M}N\left(A,\varepsilon r_i^{-1}\right),$ then by the pigeonhole principle there exists $k\in\{1,\ldots,M\}$ such that $\#\left(\omega_{Q(\varepsilon)}\cap \psi_k(A)\right)\geq N(A,\varepsilon r_k^{-1})+1.$ Thus, $\delta(\omega_{Q(\varepsilon)})<\varepsilon.$ Therefore, 
\begin{align}\label{eq:keyid} N(A,\varepsilon)=\sum_{i=1}^{M}N(A,\varepsilon r_i^{-1}),  \text{ for all } 0<\varepsilon\leq \delta.
\end{align}
Hence,  
\begin{align*} 
N(A,\varepsilon)=\sum_{i=1}^{M}N(A,\varepsilon r_i^{-1})+L(\varepsilon),\text{ for all } \varepsilon>0,
\end{align*}
where $L(\varepsilon)$ is a piecewise continuous integer-valued function with finitely many discontinuities, vanishing for all $0<\varepsilon\leq \delta.$ Following the setup for the Continuous Renewal Theorem, we write
\begin{align*}
    Z(a):=e^{-ad}N(A,e^{-a}),\quad \text{ for } a>0.
\end{align*}
By Equation \ref{eq:dimensionssf}, 
\begin{align*}
    \mu:=\sum_{i=1}^{M}r_i^d\delta_{-\log(r_i)}.
\end{align*}
is a Borel probability measure on $[0,\infty),$ with finite expectation given by
$$\overline\mu=\sum_{i=1}^{M}r_i^d\log(r_i^{-1}).$$
Furthermore, $\mu$ is non-arithmetic because the set of contractions $\{r_i\}_{i=1}^{M}$ is independent.
\par 
Using Equation \ref{eq:keyid}, $Z(a)$ satisfies Equation \ref{eq:renewid}, where $z(a)$ is piecewise continuous, with only finitely many discontinuities in $[0,\infty).$ It follows that $z(a)$ is directly Riemann integrable, so Theorem \ref{thm:renew} implies 
\begin{align}\label{eq:comput}   \mathcal{N}_d(A)=\lim_{\varepsilon\to0^+}N(A,\varepsilon)\varepsilon^d=\lim_{a\to\infty}{Z(a)}=\frac{\int_{0}^{\infty}z(x)dx}{\sum_{i=1}^{M}r_i^d\log(r_i^{-1})}\in(0,\infty),
\end{align}
as $\mathcal{\underline{N}}_d(A)>0,$ for all self-similar fractals that meet Definition \ref{def:self-similar}, by Proposition \ref{thm:growth}.
\end{proof} 
\par The next result provides the other half of the critical inequality in Sharpness Theorem \ref{thm:sharp2}. Not only does the result establish Minkowski measurability of independent self-similar fractals in $\mathbb{R};$ it also provides an \textit{exact formula} for the Minkowski content of these sets that we exploit in Theorem \ref{thm:sharp2}.
\begin{theorem}[Falconer \cite{Falconer_1995} Proposition 4 (a)]\label{thm:fractalf} Suppose $A\subset\mathbb{R}$ is an independent self-similar fractal with contractions $\{r_i\}_{i=1}^{M},$ and gaps $b_1,\ldots,b_{m-1}.$ Let $d\in(0,1)$ be the Minkowski dimension of $A$ so that $\sum_{i=1}^{M}r_i^d=1.$ Then
\begin{align*}
    \mathcal{M}_d(A)=\frac{2^{1-d}}{d(1-d)}\left(\frac{\sum_{i=1}^{M}b_i^{d}}{\sum_{i=1}^{M}r_i^d\log(r_i^{-1})}\right)
\end{align*}
\end{theorem}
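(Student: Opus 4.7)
My plan is to adapt the renewal-theoretic blueprint used in the proof of Theorem \ref{thm:fractal}, but applied to the Lebesgue volume $V(\varepsilon) := m_1(A(\varepsilon))$ rather than the packing counts. Since $A$ is a compact self-similar fractal in $\mathbb{R}$ with $\dim_M(A) = d \in (0,1)$, it has $m_1(A) = 0$ and is itself a cut-out set. A direct computation using its complementary intervals $\{I_j\}$ with lengths $l_j$ gives $V(\varepsilon) = 2\varepsilon + K(\varepsilon)$, where
\begin{equation*}
K(\varepsilon) := \sum_j \min(l_j, 2\varepsilon),
\end{equation*}
and the $2\varepsilon$ accounts for the two endpoints of $I = [0,1]$. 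Since $\varepsilon^{d-1}\cdot 2\varepsilon = 2\varepsilon^d \to 0$ as $\varepsilon \to 0^+$, the problem reduces to computing $\lim_{\varepsilon \to 0^+} \varepsilon^{d-1} K(\varepsilon)$.

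The key structural observation is that the multiset $\mathcal{G}$ of all complementary interval lengths of $A$ decomposes as $\mathcal{G} = \{b_1,\dots,b_{M-1}\} \sqcup \bigsqcup_{i=1}^{M} r_i \cdot \mathcal{G}$, which yields the functional equation
\begin{equation*}
K(\varepsilon) = \sum_{i=1}^{M} r_i K(\varepsilon/r_i) + G(\varepsilon),\qquad G(\varepsilon) := \sum_{j=1}^{M-1} \min(b_j, 2\varepsilon).
\end{equation*}
After the logarithmic substitution $\varepsilon = e^{-a}$ and setting $\kappa(a) := e^{(1-d)a} K(e^{-a})$, $a_i := \log(r_i^{-1})$, Moran's equation $\sum_i r_i^d = 1$ transforms this into the renewal equation $\kappa(a) = \sum_{i=1}^{M} r_i^d \kappa(a - a_i) + g(a)$, with forcing term $g(a) := e^{(1-d)a} G(e^{-a})$. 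The measure $\mu := \sum_i r_i^d \delta_{a_i}$ is then a Borel probability measure on $(0,\infty)$ by Moran's equation, non-arithmetic by the independence hypothesis on $\{r_i\}$, with finite expectation $\overline{\mu} = \sum_i r_i^d \log(r_i^{-1})$.

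Each summand $g_j(a) = e^{(1-d)a}\min(b_j, 2e^{-a})$ is piecewise exponential, equal to $b_j e^{(1-d)a}$ on $a < \log(2/b_j)$ and to $2e^{-da}$ on $a \geq \log(2/b_j)$. Both pieces decay exponentially toward their respective infinities, so $g$ is non-negative, bounded, continuous, and directly Riemann integrable on $\mathbb{R}$; elementary integration gives $\int_{-\infty}^{\log(2/b_j)} b_j e^{(1-d)a}\,da = 2^{1-d} b_j^d/(1-d)$ and $\int_{\log(2/b_j)}^\infty 2 e^{-da}\,da = 2^{1-d} b_j^d/d$, summing to $2^{1-d} b_j^d /(d(1-d))$. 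Hence $\int_\mathbb{R} g(a)\,da = \frac{2^{1-d}}{d(1-d)}\sum_{j=1}^{M-1} b_j^d$. Applying the Continuous Renewal Theorem (Theorem \ref{thm:renew}) yields
\begin{equation*}
\mathcal{M}_d(A) = \lim_{a\to\infty} \kappa(a) = \frac{1}{\overline{\mu}}\int_\mathbb{R} g(a)\,da,
\end{equation*}
which is exactly the claimed formula.

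The principal technical obstacle is that Theorem \ref{thm:renew} is stated for $a \in [0,\infty)$ with forcing term directly Riemann integrable on the half-line, whereas my renewal equation holds on all of $\mathbb{R}$ and $g$ naturally lives on the full line. The standard Lalley-style workaround is to truncate: define $\tilde\kappa(a) = \kappa(a)$ for $a \geq 0$ and $\tilde\kappa(a) = 0$ for $a < 0$, and $\tilde g(a) := g(a) + \sum_{i:\, a_i > a} r_i^d \kappa(a - a_i)$ for $a \geq 0$, so $\tilde\kappa$ satisfies the half-line renewal equation with forcing $\tilde g$. For $a < a_i$ with $a \geq 0$, the argument $\varepsilon = e^{a_i - a} > 1$ exceeds $l_1 = \max_j b_j$, so $K(\varepsilon) = \sum_j l_j = 1$ and $\kappa(a-a_i)$ reduces to the explicit exponential $r_i^{1-d} e^{(1-d)a}$. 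Thus $\tilde g$ is bounded with compact support in $[0,\infty)$, hence directly Riemann integrable; and a brief computation using $\sum_i r_i + \sum_j b_j = 1$ verifies $\int_0^\infty \tilde g\,da = \int_\mathbb{R} g\,da$, so that Theorem \ref{thm:renew} delivers the limit displayed above.
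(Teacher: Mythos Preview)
The paper does not supply its own proof of Theorem \ref{thm:fractalf}; it is quoted from Falconer and used as a black box in the proof of Theorem \ref{thm:sharp2}. So there is no in-paper argument to compare against. Your renewal-theoretic approach is nonetheless exactly in the spirit of the paper's treatment of Theorem \ref{thm:fractal}: you replace the packing count $N(A,\varepsilon)$ by the tube volume $K(\varepsilon)=\sum_j\min(l_j,2\varepsilon)$, derive the self-similar recursion from the decomposition of the gap multiset, and pass to the log-scale renewal equation. The computation of $\int_{\mathbb{R}}g(a)\,da=\frac{2^{1-d}}{d(1-d)}\sum_j b_j^d$ and the verification that $\int_0^\infty\tilde g=\int_{\mathbb{R}}g$ via $\sum_i(r_i^d-r_i)/(1-d)=\sum_j b_j/(1-d)$ are both correct.

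There is one small slip in the last paragraph: you assert that $\tilde g$ has compact support in $[0,\infty)$. Only the correction term $\sum_{i:\,a_i>a}r_i^d\kappa(a-a_i)$ is compactly supported (in $[0,\max_i a_i]$); the original forcing $g(a)$ equals $2(M-1)e^{-da}$ for all sufficiently large $a$ and never vanishes. This does not damage the argument, since $g$ is continuous and decays exponentially, hence is directly Riemann integrable on $[0,\infty)$, and adding a bounded compactly supported piece preserves that property. Just amend the sentence to say $\tilde g$ is bounded, piecewise continuous with finitely many discontinuities, and dominated by an integrable exponential, hence directly Riemann integrable.
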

We are now well-positioned to prove Sharpness Theorem \ref{thm:sharp2}. 
\begin{proof}[Proof of Theorem \ref{thm:sharp2}] Directly applying the arguments and notation of Theorem \ref{thm:fractal}, we obtain 
$$\lim_{\varepsilon\to0^+}N(T,\varepsilon)\varepsilon^t=\frac{\int_{0}^{\infty}z(x)dx}{\left(\frac{1}{2}\right)^t\log(2)+\left(\frac{1}{3}\right)^t\log(3)},
$$
where $z(x)$ is given as in Equation \ref{eq:renewid}. A key ingredient of the proof is the following piecewise formula for $z(x),$ $x\in[0,\log5]\cup[\log6,\infty):$
\begin{align}\label{eqn:z} 
z(x)=
    \begin{dcases}
        2e^{-xt}, &0< x<\log2\\
        e^{-xt}, &\log2\leq x<\log3\\
        0, &\log3\leq x\leq \log5\\
        {0}, & x\geq \log6\\ 
    \end{dcases}
\end{align}
We shall now describe how to obtain Line \ref{eqn:z}. Firstly, given the containments
\begin{align}\label{eq:configs}
    \bigcup_{j\in\{1,2,3,4,6\}}\left\{\frac{i}{j}\text{ : }0\leq i\leq j\right\}\subset T,
\end{align}
and the inequality $N(T,\varepsilon)\leq N([0,1],\varepsilon),$ one deduces that the packing at $\varepsilon=\frac{1}{j},$ $j=1,2,3,4,6$ is best possible: i.e.
\begin{align}\label{eq:bestpos}
    N\left(T,\frac{1}{j}\right)=j+1,\quad \text{for} \quad j=1,2,3,4,6.
\end{align}
At the same time $\frac{1}{2}<\frac{3}{5}<\frac{2}{3},$ so $\frac{3}{5}\notin T,$ which implies $N(T,1/5)=5.$
Proving the containments in Line \ref{eq:configs} amounts to showing these points are endpoints of intervals remaining to $T$ in its recursive construction. We show this in Figure 2 using finitely many iterations of the construction. 
\begin{figure}[t]\label{fig:cantor}
\includegraphics[width=12cm]{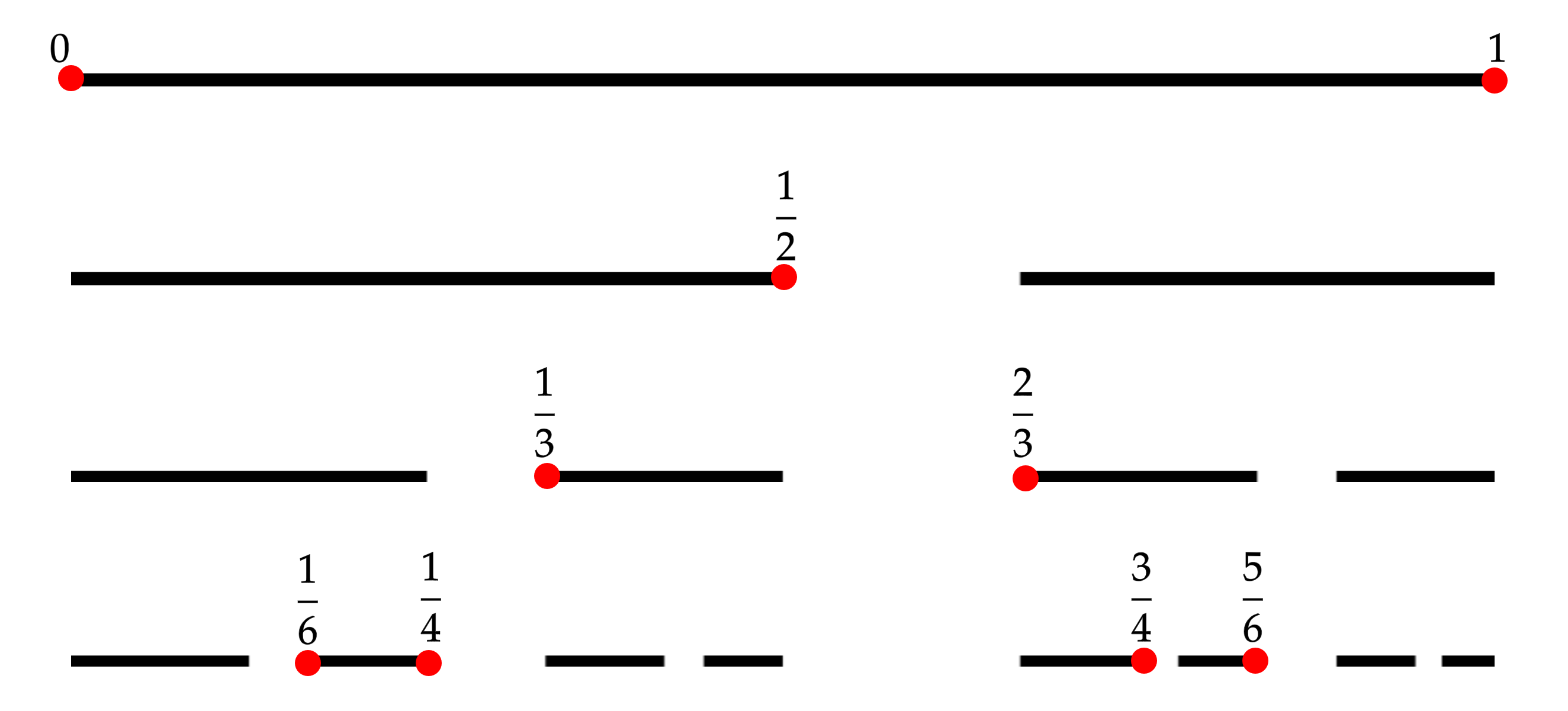}
\caption{Packing points for the $(1/2,1/3)$ Cantor set $T,$ from Line \ref{eq:bestpos}.}
\centering
\end{figure}
Using Equation \ref{eq:bestpos}, Equation \ref{eqn:z} follows by keeping track of the support of $\mu=\left({1}/{2}\right)^t\delta_{\log(2)}+\left({1}/{3}\right)^t\delta_{\log(3)},$ and by exploiting the monotonicity of $N(T,\varepsilon).$ 
Proceeding, we seek to upper bound $\mathcal{N}_t(T)$ by analyzing the behavior of $z(x)$ for $x\in(\log5,\log6).$ Using the piecewise formula for $z(x)$ we obtain
\begin{align*}
\int_0^{\infty}z(x)dx
&=\int_{0}^{\log2}2e^{-xt}dx+\int_{\log2}^{\log3}e^{-xt}dx+\int_{\log5}^{\log6}z(x)dx\\
&=\frac{2}{t}\left(1-\left(\frac{1}{2}\right)^t\right)+\frac{1}{t}\left(\left(\frac{1}{2}\right)^t-\left(\frac{1}{3}\right)^t\right)+\int_{\log5}^{\log6}z(x)dx\\
&=\frac{1}{t}\left(1-\left(\frac{1}{2}\right)^t-\left(\frac{1}{3}\right)^t+1\right)+\int_{\log5}^{\log6}z(x)dx\\
&=\frac{1}{t}+\int_{\log5}^{\log6}z(x)dx.
\end{align*}
Therefore, it remains to control the integral $\int_{\log5}^{\log6}z(x)dx.$ We consider two distinct possibilities, below.
\begin{enumerate}[{Case} 1:]
\item If $N(T,\varepsilon)=5,$ for all $\varepsilon\in(1/6,1/5),$ then $z(x)=0$ for all $x\in[\log5,\log6)$ by the arguments following Line \ref{eq:bestpos}. Hence, by Theorem \ref{thm:fractal},
$$\mathcal{N}_t(T)=\frac{1}{t\left(\left(\frac{1}{2}\right)^t\log(2)+\left(\frac{1}{3}\right)^t\log(3)\right)}<1.47.$$
On the other hand, Equation \ref{eq:comput} of Theorem 
\ref{thm:fractalf} implies that $T$ is Minkowski measurable with content given by
$$\mathcal{M}_t(T)=\frac{2^{1-t}}{t(1-t)}\frac{\left(\frac{1}{6}\right)^t}{\left(\left(\frac{1}{2}\right)^t\log(2)+\left(\frac{1}{3}\right)^t\log(3)\right)}.$$
Thus, 
\begin{align*} p_t\mathcal{M}_t(T)=&\frac{\left(\frac{1}{6}\right)^t}{t\left(\left(\frac{1}{2}\right)^t\log(2)+\left(\frac{1}{3}\right)^t\log(3)\right)}\sum_{n=1}^{\infty}\frac{n^t-(n-1)^t}{n}>1.56,
\end{align*}
where we obtain the second inequality by numerically estimating the series using finitely many terms.
Therefore, $\mathcal{N}_t(T)<p_t\mathcal{M}_t(T),$ and the desired conclusion holds.
\\
\item Otherwise, there exists a minimal $x_0\in(\log5,\log6),$ such that $N(T,e^{-x})\geq 6,$ for all $x\in[x_0,\log6).$ In this case, $z(x)=0$ for all $x\in(\log5,x_0),$ and $z(x)=e^{-xt}$ for all $x\in[x_0,\log6),$ again by the strategy following Line \ref{eq:bestpos}. Therefore,
$$\int_{\log5}^{\log6}z(x)dx=\int_{x_0}^{\log6}e^{-xt}dx=\frac{1}{t}\left(e^{-x_0t}-\left(\frac{1}{6}\right)^t\right)<\frac{1}{t}\left(\left(\frac{1}{5}\right)^t-\left(\frac{1}{6}\right)^t\right).$$
In this case, 
$$\mathcal{N}_t(T)<\frac{1+\left(\frac{1}{5}\right)^t-\left(\frac{1}{6}\right)^t}{t\left(\left(\frac{1}{2}\right)^t\log(2)+\left(\frac{1}{3}\right)^t\log(3)\right)}<1.53,$$
while again $p_t\mathcal{M}_t(T)>1.56,$
and the result holds.
\end{enumerate}
Because the result $\mathcal{N}_t(T)<p_t\mathcal{M}_t(T)$ holds in either case above, the inequality is true in general. This concludes the proof of the theorem.
\end{proof}
\begin{remark} The mechanics of the above proof reveal that the packing constant $\mathcal{N}_t(T)$ can be \textit{explicitly determined} by the value of $\delta(T,6).$ Computing this, however, appears to be no easy task.
\end{remark}
\end{subsection}
\section{Conjectures}\label{sec:CQfD}
We conjecture that Minkowski measurability is necessary and sufficient for the existence of first-order packing asymptotics on compact subsets of $\mathbb{R}.$
\begin{conjecture}\label{conj:necandsuff} Fix $d\in(0,1],$ and suppose $A\subset\mathbb{R}$ is an infinite compact set of Minkowski dimension $d.$ Then the following are equivalent:
\begin{enumerate}
  \item\label{item0} $A$ is Minkowski measurable of dimension $d$ (i.e. $\mathcal{M}_d(A)$ exists and is positive and finite.)
  \item\label{item1} The packing constant  $\mathcal{N}_d(A)$ exists and is positive and finite.
\end{enumerate}
\end{conjecture}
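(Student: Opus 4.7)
The plan is to reduce to the case $d \in (0,1)$ and attack each direction with different tools. For $d = 1$, Theorem \ref{thm:full} specialized to $p = 1$ gives $\lim_{\varepsilon\to 0^+} N(A,\varepsilon)\varepsilon = m_1(A)$ unconditionally; since $\mathcal{M}_1 = m_1$ on compact sets, both conditions become the single statement $m_1(A) > 0$, so the equivalence is immediate. For $d \in (0,1)$ we necessarily have $m_1(A) = 0$, so the paragraph preceding Definition \ref{def:cut} realizes $A$ as a cut-out set $\Gamma = I \setminus \bigcup_j I_j$, and Falconer's Theorem \ref{thm:meas} translates Minkowski measurability into $\lim_{j\to\infty} l_j j^{1/d} = L \in (0,\infty)$. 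The real content of the conjecture is thus that for a cut-out set the spectral condition $l_j j^{1/d} \to L$ is equivalent to the existence of $\mathcal{N}_d(\Gamma)$, regardless of how the intervals $I_j$ are arranged inside $I$.

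For the direction (2) $\Rightarrow$ (1), I would argue by contrapositive, generalizing the mechanism behind Theorem \ref{thm:sharp1} and Theorem \ref{thm:exact}. Assuming $\underline{L} < \overline{L}$, I would extract subsequences $j_n^\pm$ realizing these two cluster values and set scales $\varepsilon_n^\pm$ proportional to $l_{j_n^\pm}$. The aim is to produce distinct subsequential limits for $N(\Gamma,\varepsilon)\varepsilon^d$ along $\varepsilon_n^-$ and $\varepsilon_n^+$. An upper bound of the linear-programming type used in Theorem \ref{thm:upper}, applied on blocks of consecutive indices where monotonicity can be enforced by internal re-sorting (without altering the gap multiset), together with a greedy lower bound as in Theorem \ref{thm:lower}, should yield distinct asymptotic values along $\varepsilon_n^-$ and $\varepsilon_n^+$. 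The subtle point is that the arrangement of $\Gamma$ is arbitrary; one must verify that swapping finitely many intervals changes $N(\Gamma,\varepsilon)$ by at most a bounded additive constant, so that at the chosen scales the asymptotics depend only on the gap spectrum and not on the precise geometric ordering.

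The direction (1) $\Rightarrow$ (2) is the main obstacle, and no rearrangement-invariant estimate alone can suffice: Sharpness Theorem \ref{thm:sharp2} shows that the packing constant is not a function of $\mathcal{M}_d(\Gamma)$ alone, so one cannot simply appeal to the Main Theorem. My proposed strategy is renewal-theoretic, modeled on Lalley's Theorem \ref{thm:fractal}. I would try to show that for any Minkowski-measurable cut-out set the function $Z(a) := e^{-ad} N(\Gamma, e^{-a})$ satisfies a perturbed renewal equation
\begin{equation*}
Z(a) = z(a) + \int_0^a Z(a-x)\, d\mu_\Gamma(x) + E(a),
\end{equation*}
where $\mu_\Gamma$ is a non-arithmetic Borel probability measure on $[0,\infty)$ built from the asymptotic gap distribution implied by $l_j j^{1/d} \to L$, $z(a)$ is directly Riemann integrable in the sense of Definition \ref{def:arithrenew}, and $E(a) \to 0$ as $a \to \infty$. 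A quantitative version of Theorem \ref{thm:renew} would then force convergence of $Z(a)$. The decisive difficulty is identifying the measure $\mu_\Gamma$ and proving $E(a) \to 0$ in the absence of a genuine self-similar recursion; as a warm-up I would verify the conjecture on the intermediate class of cut-out sets whose gap lengths arise from a finite geometric mixture, where $\mu_\Gamma$ is explicit, before attempting the general case.
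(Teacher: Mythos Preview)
The statement you are addressing is labelled \emph{Conjecture} in the paper, and the paper does \emph{not} prove it. The discussion following Conjecture~\ref{conj:necandsuff} explicitly records only partial evidence: Theorem~\ref{thm:full} gives $(1)\Rightarrow(2)$ for $d=1$; Theorem~\ref{thm:main} gives $(1)\Rightarrow(2)$ for $d\in(0,1)$ \emph{only under the additional hypothesis of monotonicity}; and the equivalence is known for self-similar fractals via Lalley, Gatzouras, and Kombrink--Winter. The paper offers no proof of either implication in general, so there is no ``paper's own proof'' against which to compare your proposal.

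Your proposal is therefore an attempt to settle an open problem, and as written it is a programme rather than a proof. Two specific gaps stand out. For $(2)\Rightarrow(1)$, your key reduction---that at the scales $\varepsilon_n^\pm$ the asymptotics of $N(\Gamma,\varepsilon)$ depend only on the gap spectrum and not on the arrangement---is precisely what Theorem~\ref{thm:sharp2} shows can fail: the $(1/2,1/3)$ Cantor set $T$ and its monotone rearrangement have the same gap multiset and the same Minkowski content, yet different packing constants. Swapping \emph{finitely} many intervals is a bounded perturbation, but a general cut-out set is not a finite perturbation of its monotone rearrangement, so the ``internal re-sorting'' step does not reduce the problem to the monotone case. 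For $(1)\Rightarrow(2)$, you yourself identify the obstruction: without self-similarity there is no evident recursion giving a renewal equation, and you do not specify what $\mu_\Gamma$ should be for a generic Minkowski-measurable cut-out set, nor why the remainder $E(a)$ should vanish. These are exactly the difficulties that leave the conjecture open.
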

Theorem \ref{thm:full} establishes (1) implies (2) when $d=1.$ For $d\in(0,1),$ the Main Theorem of this manuscript (Theorem \ref{thm:main}) shows that (1) implies (2) if the complementary intervals of $A=I\setminus \cup_{j=1}^{\infty}I_j$ satisfy Property II (Monotonicity). Moreover, Sharpness Theorem \ref{thm:sharp1} demonstrates that measurability is necessary for the convergence of packing on some of these sets, without proving (2) implies (1) on monotone sets. Conditions (1) and (2) are equivalent if $A$ is a self-similar fractal in $\mathbb{R}.$ 
\par Sharpness Theorem \ref{thm:sharp2} raises many potentially difficult questions. We suspect that the lower bound provided in Line \ref{eq:p2} of Proposition \ref{thm:inv} is sub-optimal for Minkowski measurable $\mathcal{L}=(l_j)_{j=1}^{\infty}$. In light of the evidence for Conjecture \ref{conj:necandsuff}, it is therefore feasible that the quantity $N(\mathcal{L},\varepsilon)\varepsilon^d$ converges as $\varepsilon\to 0^+$ to a quantity larger than the lower bound of Line \ref{eq:p2}. We state the following conjecture.
\begin{conjecture}\label{conj:inv}
    Fix $d\in(0,1),$ and suppose that $\mathcal{L}=(l_j)_{j=1}^{\infty}$ has Minkowski dimension $d.$ Then the following are equivalent:
\begin{enumerate}
  \item\label{item00} $\mathcal{L}$ is Minkowski measurable of dimension $d$.
  \item\label{item2} The following limit exists as a finite, positive number: $$0<\lim_{\varepsilon\to0^{+}}N(\mathcal{L},\varepsilon)\varepsilon^d<\infty.$$
\end{enumerate}
\end{conjecture}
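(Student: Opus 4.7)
To approach Conjecture \ref{conj:inv}, the plan is to establish a precise asymptotic identity for $N(\mathcal{L}, \varepsilon)$, reducing both implications to the analysis of a single quantity. Define
\[\phi(\varepsilon) := F(1, \varepsilon) \varepsilon^d + \varepsilon^{d-1} \sum_{j > F(1, \varepsilon)} l_j.\]
I would first prove that $N(\mathcal{L}, \varepsilon) \varepsilon^d = \phi(\varepsilon) + o(1)$ as $\varepsilon \to 0^+$, and then show that $\phi(\varepsilon)$ has a finite, positive limit if and only if $\mathcal{L}$ is Minkowski measurable of dimension $d$.

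The upper bound $N(\mathcal{L}, \varepsilon) \varepsilon^d \leq \phi(\varepsilon) + o(1)$ is essentially already in hand from the proof of Proposition \ref{thm:inv}: one may assume without loss of generality that the optimal partition contains every singleton $\{j\}$ with $l_j \geq \varepsilon$, so that the remaining indices have total mass $\sum_{j > F(1, \varepsilon)} l_j$ and produce at most $\sum_{j > F(1, \varepsilon)} l_j / \varepsilon$ further subsets. For the matching lower bound, I would construct a partition explicitly: together with the singletons, partition the remaining indices into subsets each of mass within $o(\varepsilon)$ of $\varepsilon$, by pairing each \emph{medium} index (one with $l_j$ slightly below $\varepsilon$) with a collection of \emph{tiny} indices (with $l_j$ arbitrarily small, which exist because $l_j \to 0$) to form a subset of mass $\varepsilon + o(\varepsilon)$, and by greedily accumulating the leftover tiny indices into additional subsets. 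This should produce approximately $F(1, \varepsilon) + \sum_{j > F(1, \varepsilon)} l_j / \varepsilon$ subsets, matching the upper bound asymptotically.

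With the identity $N(\mathcal{L}, \varepsilon) \varepsilon^d = \phi(\varepsilon) + o(1)$ established, the conjecture reduces to showing that $\phi(\varepsilon)$ converges as $\varepsilon \to 0^+$ precisely when $\mathcal{L}$ is Minkowski measurable. Along the natural subsequence $\varepsilon = l_n$, one has $\phi(l_n) = l_n^{d-1}\bigl(n l_n + \sum_{j > n} l_j\bigr)$. In the Minkowski measurable case $l_j j^{1/d} \to L$, a direct integral comparison gives $\phi(l_n) \to L^d/(1-d)$, proving direction (1) $\Rightarrow$ (2). Conversely, if $\underline{L} < \overline{L}$, choosing subsequences along which $n^{1/d} l_n$ tends to $\underline{L}$ and $\overline{L}$ respectively should lead to distinct limit values of $\phi(l_n)$, establishing direction (2) $\Rightarrow$ (1) by contrapositive.

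The main obstacle is the lower bound construction: ensuring that the pairing scheme produces subsets of mass only slightly above $\varepsilon$, uniformly as $\varepsilon \to 0^+$. For Minkowski measurable $\mathcal{L}$, the regularity $l_j \sim L j^{-1/d}$ provides the needed control on the available padding mass; in the non-measurable setting, however, the distribution of tiny elements itself may oscillate, and one must rule out the possibility that the oscillations in $n l_n^d$ and in $l_n^{d-1}\sum_{j > n} l_j$ conspire to cancel out and produce a spurious limit. Ruling out such \emph{accidental convergence} appears to require a refined understanding of how non-measurable mass distributions interact with subset-sum partitioning, and is likely the deepest difficulty of the conjecture.
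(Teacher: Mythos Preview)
This statement is a \emph{conjecture} in the paper, not a theorem; the paper offers no proof, only the growth bounds of Proposition~\ref{thm:inv} and some motivation. So there is no ``paper's proof'' to compare against. What you have written is a program for attacking the conjecture, and you are appropriately candid that it has two unresolved steps.

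On the first step, the proposed identity $N(\mathcal{L},\varepsilon)\varepsilon^d=\phi(\varepsilon)+o(1)$: the upper bound is indeed Proposition~\ref{thm:inv}'s argument verbatim. The lower bound, however, is more delicate than your sketch suggests. The difficulty is quantitative: the number of ``medium'' indices $j$ with $l_j\in[\eta\varepsilon,\varepsilon)$ is of order $\varepsilon^{-d}$, and each needs padding mass of order $\varepsilon$, so the total padding demand is of order $\varepsilon^{1-d}$---the same order as the entire tail mass $\sum_{j>F(1,\varepsilon)}l_j$. A na\"ive ``one medium element per bucket, pad with tiny elements'' scheme therefore runs out of tiny mass long before all medium elements are served. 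To close the gap you would need either a multi-scale scheme or a balanced-allocation argument (e.g.\ process elements in decreasing order, always adding to the currently lightest of $K=\lfloor S/\varepsilon\rfloor$ bins, and prove that all bins converge to $S/K\geq\varepsilon$). Such an argument is plausible but is itself a lemma requiring proof; if it goes through, you would obtain the exact limit $L^d/(1-d)$ in the measurable case, which is strictly stronger than the paper's Proposition~\ref{thm:inv}.

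On the second step, the equivalence ``$\phi(\varepsilon)$ converges $\Leftrightarrow$ $\mathcal{L}$ is measurable'': direction $(1)\Rightarrow(2)$ is routine once the identity holds, but direction $(2)\Rightarrow(1)$ is, as you say, the crux. Writing $\phi(l_n)=L_n^d+L_n^{d-1}\cdot n^{(1-d)/d}\sum_{j>n}l_j$ with $L_n=l_n n^{1/d}$, the two terms can in principle oscillate in opposite phase. Note, though, that the monotonicity constraint $l_{j+1}\leq l_j$ forces $L_{j+1}/L_j\leq (1+1/j)^{1/d}$, so $L_j$ can only rise slowly; exploiting this asymmetry may be the key to ruling out exact cancellation. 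Absent that, your proposal remains a plausible roadmap rather than a proof.
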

\begin{center}
    \section*{\normalsize{Acknowledgements}}
\end{center}
 The authors would like to thank several colleagues for their helpful comments during the preparation of this manuscript. Firstly, we thank Alexander Reznikov, Jonathan Schillinger, and Edward White for their participation in the Florida State University Analysis Seminar, where these problems were discussed at length. We also thank Ryan Matzke for pointing out a superfluous hypothesis in an early draft. Finally, the first author thanks the organizers of the 2023 Vanderbilt Conference; \textit{Approximation Theory and Beyond}, as well as the Fall 2023 Southeastern Sectional Meeting of the American Mathematical Society, where he presented partial progress on this project while this manuscript was still being written.
\bibliographystyle{acm}
\bibliography{main}
\end{document}